\renewcommand{\phi}{\varphi} 
\titleformat{\section}{\LARGE}{\thesection.}{0.33em}{\centering}
\titleformat{\subsection}{\Large\bfseries}{\thesubsection.}{0.33em}{}
\titleformat{\subsubsection}{\large\bfseries}{\thesubsubsection.}{0.33em}{}
\renewcommand{\comment}[1]{}
\newtheorem{theorem}{Theorem}[section]
\newtheorem{lemma}{Lemma}[section]
\newtheorem{remark}[lemma]{Remark}
\newtheorem{corollary}[lemma]{Corollary} 
\newtheorem{proposition}[lemma]{Proposition}
\numberwithin{equation}{section}  
\let\widehat\undefined
\newcommand\widehat[1]{%
\savestack{\tmpbox}{\stretchto{%
  \scaleto{%
    \scalerel*[\widthof{\ensuremath{#1}}]{\kern-.6pt\bigwedge\kern-.6pt}%
    {\rule[-\textheight/2]{1ex}{\textheight}} 
  }{\textheight}%
}{0.5ex}}%
\stackon[2pt]{#1}{\tmpbox}%
}
\newcommand\widecheck[1]{ \savestack{\tmpbox}{\stretchto{\scaleto{
    \scalerel*[\widthof{\ensuremath{#1}}]{\kern-.6pt\bigvee\kern-.6pt}
    {\rule[-\textheight/2]{1ex}{\textheight}} 
  }{\textheight} }{0.5ex}} \stackon[2pt]{#1}{\tmpbox} }
\let\xxrightharpoonup\xrightharpoonup    
\let\xrightharpoonup\undefined  
\newcommand{\xrightharpoonup}[1]{\ifthenelse{\equal{#1}{\star}}
{\xxrightharpoonup{\raisebox{-0.25ex}[0pt][-1.5ex]{\(\scriptstyle \,\,#1\,\,\)}}}
{\xxrightharpoonup{\raisebox{0ex}[0pt][-1.5ex]{\(\scriptstyle \,\,#1\,\,\)}}}}
\title[hydrostatic approximation]{On the hydrostatic approximation of Navier-Stokes-Maxwell system with 2D electronic fields}
\author{Faiq Raees}
\address[F. Raees]{Courant Institute of Mathematical Sciences, New York University, 251 Mercer Street, New York, United States of America.\newline \indent Department of Mathematics, New York University Abu Dhabi, Saadiyat Island, P.O. Box 129188, Abu Dhabi, United Arab Emirates.}
\email{fr872@nyu.edu}
\author{Weiren Zhao}
\address[W. Zhao]{Department of Mathematics, New York University Abu Dhabi, Saadiyat Island, P.O. Box 129188, Abu Dhabi, United Arab Emirates.}
\email{zjzjzwr@126.com, wz19@nyu.edu}
\begin{document}

\begin{abstract} 
    In this paper, we prove the local well-posedness of a scaled anisotropic Navier-Stokes-Maxwell system in a two-dimensional striped domain with a transverse magnetic field around $ (0,0,1)$ in Gevrey-2 class. We also justify the limit from the scaled anisotropic equations to the associated hydrostatic system and obtain the precise convergence rate. Then, we prove the global well-posedness for the system and show that small perturbations near $(0,0,1)$ decay exponentially in time. Finally, we show the optimality of the Gevrey-2 regularity by proving the solution to linearized hydrostatic system around shear flows $(V(y),0,0)=(y(1-y),0,0)$ with some initial data $(\zeta, \zeta ^1)$ grows exponentially. More precisely, for some large parameter $  \lvert k \rvert>M\gg 1 $ corresponding to the frequency in $x$, there exists a solution $ h_k(t,x,y)$ of the system   
    \begin{equation*} 
        \begin{cases}  \partial_{tt}h_k+\partial_th_k-\partial_{yy}h_k+V(y) \partial_x h_k =0,\\
            h_k(0,x,y)=\zeta,\quad \partial_th_k(0,x,y)= \zeta ^1,\\
            h_k(t, x,0)=h_k(t, x, 1)=0,  \end{cases}  
    \end{equation*} 
    such that for any $s\in [0,\frac{1}{2})$ and $t\in [T_k,T_0)$ with $T_{k}\approx |k|^{s-\frac{1}{2}}\to 0$ as $|k|\to \infty$ and some $T_0$ small and independent of $k$, it satisfies 
    \begin{align*}
        \lVert  h_k(t) \rVert_{L^2 }\geq C \, e^{\sqrt{|k|}t}( \lVert  \zeta \rVert_{L^2} + \lVert  \zeta ^1 \rVert_{L^2}),
    \end{align*}
    for some $C > 0$ independent of $k$. 
    \mbox{}\vspace*{-2\baselineskip}
\end{abstract} 
 \maketitle  

\section{Introduction.}
In this paper, we examine a mathematical model that describes the behavior of electrically conducting fluids and hot plasma within thin boundary layers, which are relevant in various physical scenarios such as the hydrodynamics of plasma in neutron stars and white dwarfs in astronomy \cite{Aarach2023}, self-cooled liquid metal blankets in nuclear fusion reactors \cite{Buhler_2007}, and electromagnetic stirring in metallurgy \cite{moffatt1991electromagnetic}. Specifically, we focus on the Navier-Stokes-Maxwell system with negligible viscosity and magnetic diffusion in a narrow strip. We begin by rescaling the equations to form an anisotropic Navier-Stokes-Maxwell system and then rigorously establish the hydrostatic limit of this system as the vertical dimension of the strip approaches zero.

The mathematical analysis of boundary layers in electromagnetic fluids began with the work of Hartmann \cite{hartmann1937theory} who studied the flow of viscous electrically-conducting fluid with a background magnetic field. This started the development of the theory of MHD systems with results validating Hartmann's result under the assumption that the characteristic speed of the plasma are non-relativistic (for example \cite{MR3657241}, \cite{MR3975147}, \cite{MR4753440}, \cite{wang2022hydrostatic}). This hypothesis simplifies several terms in the Maxwell's equation by preventing the displacement current in the Ampere's law which introduces another source of magnetic field.  In examples like neutron stars, such an hypothesis might be a point of contention, as a strong time-dependent electric field arises when the plasma density decreases below a critical value \cite{kumar2020frb}. The study of the boundary layers of plasma where the magnetic field is affected by the displacement current was undertaken in the recent work \cite{MR4753440}. Inspired by their paper, we will instead study the boundary layers of plasma where the electric field is affected by a displaced magnetic field. We mention that the well-posedness results of classical Navier-Stokes-Maxwell system can be found in \cite{MR4105512}, \cite{MR3164536}, \cite{masmoudi2010global}.  
\subsection{Navier-Stokes-Maxwell with 2D restriction.}
We consider the following Navier-Stokes-Maxwell (NSM) system: 
\begin{equation}\label{eq:The NSM system}
    \begin{cases} \partial_t \mathbf{U} + \mathbf{U} \cdot \nabla \mathbf{U} - \nu \Delta U + \nabla p = J \times  \mathbf{B} & \text{conservation of linear momentum,} \\
        \mathrm{div\,} \mathbf{U} = 0 & \text{incompressibility,} \\ 
        \mathrm{div\,} \mathbf{B} = \mathrm{div\,} \mathbf{E} = 0 & \text{Gauss' law of electric and magnetic fields,} \\ 
        \partial_t \mathbf{B} + \mathrm{curl\,} \mathbf{E} = 0 &\text{Faraday's law,} \\ 
        \frac{1}{c^2} \partial_t \mathbf{E} + \mu _0 J = \mathrm{curl\,} \mathbf{B} &\text{Ampere's law,} \\ 
        J = \mu(\mathbf{E} + \mathbf{U} \times  \mathbf{B}) & \text{Ohm's law,} 
    \end{cases}   
\end{equation} 
where \( \nabla = ( \partial_X, \partial_Y, \partial_Z), \Delta = \partial_X^2 + \partial_Y ^2+\partial_Z^2\) and \(\mathbf{U}, \mathbf{B}, \mathbf{E}\) stand for the velocity of a viscous incompressible fluid, the incompressible magnetic field and the electronic field respectively. Here we now restrict to the Cartesian 2.5-dimensional solutions in thin striped domains \( (t, X, Y) \in [0,T] \times  \mathbb{R} \times  [0,\varepsilon]\) where the form of the solution is given by 
\[  \mathbf{U}(t,X,Y)  = \begin{pmatrix} U_1 \\ U_2 \\0 \\ \end{pmatrix}, \quad  \mathbf{E}(t,X,Y)  = \begin{pmatrix} E_1 \\ E_2 \\0 \\ \end{pmatrix}, \quad  \mathbf{B}(t,X,Y)  = \begin{pmatrix} 0 \\ 0 \\B \\ \end{pmatrix}.
\] 
Then it is easy to check that 
\[ J = \mu  _0  \begin{pmatrix} E_1 + U_2 B \\ E_2 - U_1 B \\0 \\ \end{pmatrix},   \, J \times  \mathbf{B} =   \mu _0\begin{pmatrix} E_2 B - U_1 B^2  \\  - E_1 B - U_2 B^2  \\0 \\ \end{pmatrix}, \, \mathrm{curl\,}\mathbf{E} = \partial_X E_2 - \partial_Y E_1,  \, \mathrm{curl\,} \mathbf{B} = \begin{pmatrix} \partial_Y B \\ - \partial_X B  \\0 \\ \end{pmatrix}. 
\] 
Finally, to get rid of the current density \( J\), we apply the curl operator to the evolution equation for \( \mathbf{E}\) along with the identities, 
\[ \mathrm{div\,} \mathbf{E} = \mathrm{div\,} \mathbf{U} = 0 = \partial_z  B = 0 , 
\] 
\[   \mathrm{curl\,} \mathbf{E} = - \partial_t B ,\quad \mathrm{curl\,} \mathrm{curl\,} \mathbf{B}  = - \Delta B,\quad \frac{1}{\mu } \mathrm{curl\,} J  =   \mathrm{curl\,} \mathbf{E}   + \mathrm{curl\,} ( \mathbf{U} \times  \mathbf{B}) =  - \partial_t B  - \mathbf{U} \cdot \nabla B 
\] 
to obtain 
\[  -  \frac{1}{\mu _0 \mu c^2}  \partial_{tt} B    - \partial_t B - \mathbf{U} \cdot \nabla B  = - \Delta B. 
\] 
In the light of the above calculations, we now consider the following formulation of the 2D-Navier-Stokes-Maxwell system \eqref{eq:The NSM system} for \((t, X, Y) \in (0,T) \times  \mathbb{R} \times  (0,\varepsilon) : \)
\begin{equation}\label{eq:The 2D-NSM system}
    \begin{cases}  \partial_t U_1 + \mathbf{U} \cdot \nabla U_1   - \nu \Delta U_1 + \partial_X P = \mu(E_2 B - U_1 B^2) \\ 
    \partial_t U_2 + \mathbf{U} \cdot \nabla U_2 - \nu \Delta U_2 + \partial_Y P  = - \mu(E_1 B + U_2 B^2) \\
    \partial_X U_1 + \partial _Y U_2 = \partial_X E_1 + \partial _Y E_2 = 0 \\
    \partial_t B + \partial_X E_2  - \partial_Y E_1 = 0 \\
    \frac{1}{c^2 \mu _0 \mu} \partial_{tt} B + \partial_t B - \frac{1}{\mu _ 0 \mu } \Delta B + \mathbf{U} \cdot \nabla B = 0, 
    \end{cases}  
\end{equation} 
We will consider the system under the following boundary conditions:  
\[  U_1(t, X , 0) = U_2(t, X , 0)  = \partial_Y E_1(t, X , 0) = E_2 (t, X, 0) = 0  , 
\] 
\[  U_1 (t, X, \varepsilon) = U_2(t, X , \varepsilon) = \partial_Y E_1(t, X, \varepsilon) = E_2(t, X, \varepsilon) = 0.  
\] 
One can physically intuit the above setup as a flow of viscous-electrically conducting fluid between an two infinite current-carrying wires where the wires are situated at \( y =0\) and \(y = + \varepsilon\). Then the boundary condition on \( U_1\) and \( U_2\) are simply the non-slip condition for fluids; the boundary condition on \(E_2\) is simply due to the current flowing in the horizontal direction so there cannot be a vertical component of the electric field inside the wire. The boundary condition on \( E_1 \) is the statement that the magnetic field at the plates is not time-varying (as per the Faraday's law and the boundary condition on \( E_2\)).  

\subsection{The re-scaled equations and the hydrostatic system.}
The hydrostatic approximation naturally emerges in the analysis of fluid flow when the vertical extent of the region being studied is significantly smaller than its horizontal dimensions. In this paper, we fix \(\nu = \varepsilon ^2\)  and consider the asymptotic limit as \(\varepsilon  \to 0^{+}\). We first introduce the change of variables 
\[  x = X \in \mathbb{R}, \quad y = \frac{Y}{\varepsilon} \in [0,1],  
\] 
and the re-scaled variables: 
\[  u^\varepsilon (t, x, y) = U_1(t,X, Y) , \quad v^\varepsilon(t,x , y) = \frac{U_2(t, X, Y)}{\varepsilon}, \quad p^\varepsilon(t, x, y) = P(t,X, Y )
\] 
\[ e^\varepsilon(t, x, y) = E_1(t, X, Y), \quad f^\varepsilon(t,x,y) =\frac{E_2(t,X,Y)}{\varepsilon}, \quad b^\varepsilon(t,x,y) = \frac{B(t,X,Y)}{\varepsilon}.  
\] 
where the scaling on \( v^\varepsilon\) is forced to retain a divergence-free structure on \( (u^\varepsilon , v^\varepsilon):\)
\[  \partial_x u^\varepsilon + \partial_y v^\varepsilon = \partial_X U_1 + \varepsilon \partial_Y v^\varepsilon   = \partial_ XU_1 + \partial_Y U_2  = 0, 
\] 
and similarly for \( (e^\varepsilon, f^\varepsilon)\). 
Then we find that \((u^\varepsilon, v^\varepsilon, p^\varepsilon, e^\varepsilon, f^\varepsilon, b^\varepsilon)\) verifies the following system in \((t,x,y) \in [0,T] \times  \mathbb{R} \times  [0,1] : \) 
\begin{equation}\label{eq:System for (u^e,b^e)}
    \begin{cases} \partial_t u^\varepsilon + u^\varepsilon \partial_x u^\varepsilon + v^\varepsilon \partial_y u^\varepsilon - \varepsilon ^2 \partial_{xx} u^\varepsilon - \partial_{yy} u^\varepsilon + \partial_x p^\varepsilon  = \alpha(f^\varepsilon b^\varepsilon - u^\varepsilon (b^\varepsilon)^2) \\ 
    \varepsilon ^2( \partial_t v^\varepsilon  + u^\varepsilon \partial_x v^\varepsilon + v^\varepsilon \partial_y v^\varepsilon - \varepsilon ^2 \partial_{xx} v^\varepsilon - \partial_{yy} v^\varepsilon   ) + \partial_{y} p^\varepsilon    = - \alpha(e^\varepsilon b^\varepsilon + v^\varepsilon (b^\varepsilon)^2) \\ 
    \partial_x u^\varepsilon + \partial_y v^\varepsilon  = \partial_x e^\varepsilon + \partial_y f^\varepsilon = 0 \\ 
    \partial_t b^\varepsilon + \partial_x f^\varepsilon - \partial_y e^\varepsilon = 0 \\ 
    \beta \partial_{tt} b^\varepsilon + \partial_t b^\varepsilon - \gamma (\varepsilon ^2 \partial_{xx} b^\varepsilon + \partial_{yy} b^\varepsilon  )  + u^\varepsilon \partial_x b^\varepsilon + v^\varepsilon \partial_y b^\varepsilon = 0 
    \end{cases}  
\end{equation} 
where 
\[  \alpha:= \mu \varepsilon ^2  , \quad  m:= \frac{c}{\varepsilon}, \quad \beta := \frac{1}{\mu ^2 \mu _0 \alpha} , \quad \gamma := \frac{1}{\mu _0 \alpha }.
\] 
We impose the following boundary conditions:
\[ ( u^\varepsilon , v^\varepsilon , \partial_y e^\varepsilon , f^\varepsilon ) |_{y = 0,1} = 0, \quad b^\varepsilon|_{t  = 0 , y = 0,1} = 1. 
\]  
The nature of the boundary condition on \(b^\varepsilon\) will be elucidated in the next subsection. 

Formally sending \(\varepsilon \to 0^{+}\), we obtain the {hydrostatic} Navier-Stokes-Maxwell system: 
\begin{equation}\label{eq:System for (u^p,b^p)}
   \begin{cases}  \partial_t u^{P} + u^P \partial_x u^P + v^P \partial_y u^P  -\partial_{yy} u^P + \partial_x p^P = \alpha (f^P b^P - u^P (b^P)^2)  \\ 
    \partial_y p^P = - \alpha (e^P b^P + v^P (b^P)^2) \\
    \partial_x u^P + \partial_y v^P = \partial_x e^P + \partial_y f^P = 0 \\ 
    \partial_t b^P + \partial_x f^P - \partial_y e^P = 0 \\
    \beta \partial_{tt} b^P + \partial_t b^P - \gamma \partial_{yy} b^P + u^P \partial_x b^P  + v^P \partial_y b^P  = 0.       
\end{cases}  
\end{equation}
complemented with the boundary condition 
\[ ( u^P, v^P , \partial_y e^P , f^P   ) _{y = 0,1} = 0,  \quad b^P|_{t  = 0 , y = 0,1} = 1. 
\] 
\subsection{Main results.}  
One of the most fundamental system for the boundary layer theory is the classical Prandtl theory which concerns the behavior of solutions to the Navier-Stokes equations at vanishing viscosity in presence of a boundary. Akin to the classical Prandtl system, our system \eqref{eq:System for (u^p,b^p)} loses a derivative due to \(v^p \partial_y u^p\) and \( v^p \partial _y b^p\). The local well-posedness of the classical Prandtl system was obtained by Oleinik in \cite{oleinik1966mathematical} where she showed that if the initial data is Sobolev and monotonic in \(y\) then the solutions are locally Sobolev. One can even obtain a global result under a favourable pressure gradient condition \((\partial_x p \leq 0 )\) \cite{MR2020656}. Removing the structural assumptions of monotonicty, we have local well-posedness of the Prandtl equations provided the initial data is analytic in the horizontal direction \cite{MR3284569}, \cite{kukavica2013local}, \cite{MR2049030}. The assumption of analyticity was later replaced by a milder Gevrey-\(\frac{7}{4}\) regularity in the horizontal variable \cite{gerard2015well}. This was then improved to Gevrey-\(2\) regularity in horizontal with Sobolev-regularity in vertical direction \cite{Dietert_2019, li2020well, chen2018well}.   


For the hydrostatic Navier-Stokes (Euler) equations, it is known that, without any structural assumption on the initial data, real-analyticity of the initial data is both necessary and sufficient for local well-posedness of the system \cite{MR2563627}, \cite{MR2737223}. The global well-posedness and the global-in-time limit was shown by Paicu, Zhang and the Zhang for a small analytic initial data \cite{MR4125518}. Even more, Renardy showed that the linearisation of the hydrostatic Navier-Stokes equations is ill-posed for certain parallel shear flows and the solutions grow like \( e^{  \lvert k \rvert t}\). This meant that a structural assumption was necessary to break out of analyticity. This was provided by assuming convexity on the initial data: Under this assumption, G\'erard-Varet, Masmoudi and Vicol proved local well-posedness and the hydrostatic approximation holds under Gevrey-\(\frac{9}{8}\) regularity \cite{MR1808843}. These results were then improved to Gevrey-\(\frac{3}{2}\) in \cite{MR4803680}. We also refer to \cite{MR4621052} for the recent result of the Triple-deck system. 

The magnetic field has a stabilising effect: We refer to the following papers for the discussion of the MHD boundary layers with non-degenerate background magnetic field \cite{MR3657241}, \cite{MR3975147}. The local-posedness of MHD boundary layer in Sobolev spaces is established without assuming monotonicty on the velocity field in \cite{MR3882222}. Without the non-degeneracy assumption, it was proved that the system is ill-posed in Gevrey-\(2\) class \cite{MR3864769}. Aarach investigated the rescaled anisotropic MHD system and proved its global well-posedness and the approximation to the hydrostatic MHD system for small analytic-in-\(x\) data \cite{MR4362378}. This result was improved to merely requiring Sobolev regularity under the assumption that the tangential magnetic field is non-degenerate in \cite{wang2022hydrostatic}. 

We also want to mention the hyperbolic Navier-Stokes system which has a similar structure. For this system, the global well-posedness and the global-in-time limit was proved for small analytic data in \cite{MR4650833} which was then improved to small Gevrey-\(2\) data in \cite{MR4429384}.  

Similar to the problems above, the main difficulty in providing a rigorous justification of the limit from \eqref{eq:System for (u^e,b^e)} to \eqref{eq:System for (u^p,b^p)} lies in the loss of horizontal derivatives for \((u^P, b^P)\) in trying to obtain an energy estimate for the limiting system \eqref{eq:System for (u^p,b^p)}. Indeed, following the classical Prandtl theory (\cite{MR3327535},  \cite{MR3385340}, \cite{MR4271962}), we first determine \(v^P  =  -  \partial_x \int_{0}^y u^P \mathrm{\,d}y'\) owing to the divergence-free condition for \( (u,v)\) and the no-slip boundary condition on \(v\). This gives us a loss in the horizontal derivative for \(u^P\). Similarly, in the equation for \(b^P\), we have a loss in the horizontal derivative due to \( u^P \partial_x b^P\) and \( v^P \partial_y b^P\). 

Returning to our NSM-system, we find that when the magnetic field is a small perturbation near a strong non-degenerate magnetic field, for simplicity a constant homogenous one, then the term \(v^P (b^P)^2\) in \eqref{eq:System for (u^p,b^p)} will provide us a strong linear damping to compensate for a horizontal-derivative loss in \(v\). This means that transport-terms of the form \( v^P \partial_y b^P\) are not a major issue due to the damping and diffusion in the vertical direction. However, the same cannot be said for terms like \( u^P \partial_x b^P \) due to the absence of horizontal diffusion. This poses a strong difficulty in closing energy estimates in classical function spaces. 

That said, the equation for the magnetic field in \eqref{eq:System for (u^p,b^p)} is a wave-type equation so we expect that the loss of one horizontal derivative can be recovered if the initial data is taken in Gevrey-2 class. For the above reasons, we will study the well-posedness of the system \eqref{eq:System for (u^p,b^p)} with the initial magnetic field being a small perturbation near \(1\) in the Gevrey-2 class.

To formalise the perturbation, we fix some \(h^\varepsilon := b^\varepsilon - 1\) which satisfies \( h^\varepsilon(0,x,0) = h^\varepsilon(0,x,1) = 0\). Using \( \partial_t b^\varepsilon(t,x,0) = \partial_t b^\varepsilon(t,x,1) = 0\), we have \( h^\varepsilon (t,x,0) = h^\varepsilon(t,x,1) = 0\), for all \( t \geq 0\). 
Thus, our scaled hydrostatic Navier-Stokes-Maxwell system can be written as 
\begin{equation}\label{eq:System for (u^e,h^e)}
    \begin{cases}  \partial_t u^\varepsilon + u^\varepsilon \partial_x u^\varepsilon + v^\varepsilon \partial_y u^\varepsilon - \varepsilon ^2 \partial_{xx} u^\varepsilon - \partial_{yy} u^\varepsilon + \partial_x p^\varepsilon  = \alpha(f^\varepsilon b^\varepsilon - u^\varepsilon (b^\varepsilon)^2) \\ 
        \varepsilon ^2( \partial_t v^\varepsilon  + u^\varepsilon \partial_x v^\varepsilon + v^\varepsilon \partial_y v^\varepsilon - \varepsilon ^2 \partial_{xx} v^\varepsilon - \partial_{yy} v^\varepsilon   ) + \partial_{y} p^\varepsilon    = - \alpha(e^\varepsilon b^\varepsilon + v^\varepsilon (b^\varepsilon)^2) \\ 
        \partial_x u^\varepsilon + \partial_y v^\varepsilon  = \partial_x e^\varepsilon + \partial_y f^\varepsilon = 0 \\ 
        \partial_t b^\varepsilon + \partial_x f^\varepsilon - \partial_y e^\varepsilon = 0 \\ 
        \beta \partial_{tt} h^\varepsilon + \partial_t h^\varepsilon - \gamma (\varepsilon ^2 \partial_{xx} h^\varepsilon + \partial_{yy} h^\varepsilon  )  + u^\varepsilon \partial_x h^\varepsilon + v^\varepsilon \partial_y h^\varepsilon = 0 , \\
        (u^\varepsilon, v^\varepsilon, e^\varepsilon, f^\varepsilon)|_{t = 0} = (u_{\mathrm{in}}, v_{\mathrm{in}} , f_{\mathrm{in}} ,e_{\mathrm{in}}) ,  \quad h^\varepsilon|_{t = 0, y = 0,1} = 0.\\
        (u^\varepsilon, v^\varepsilon,\partial_y e^\varepsilon, f^\varepsilon )|_{y = 0,1} = 0 . 
          \end{cases}   
\end{equation}  

\noindent Similarly, our hydrostatic Navier-Stokes-Maxwell system can be written as: 
\begin{equation}\label{eq:System for (u^p,h^p)}
    \begin{cases} \partial_t u^P + (u^P , v^P) \cdot \nabla u^P - \partial_{yy} u^P  + \partial_x p^P = \alpha (f^p  + f^p h^P - u^P  + 2u^P h^P - u^P (h^P)^2), \\
    \partial_y p^P  = - \alpha ( e^P + e^P h^P  + v^P  + 2 v^P h^P + v^P (h^P)^2) , \\
\nabla \cdot (u^P, v^P) = \nabla \cdot ( e^P , f^P) = 0 , \\
\partial_t h^P  + \partial_x f^P - \partial_y e^P = 0 , \\
\beta \partial_{tt} h^P  + \partial_t h^P  - \gamma \partial_{yy} h^P + (u^P, v^P) \cdot \nabla h^P = 0 , \\ 
(u^P, v^P , e^P , f^P)|_{t = 0} =(u_{\mathrm{in}}, v_{\mathrm{in}}, e_{\mathrm{in}}, f_{\mathrm{in}}),  \quad h^P|_{t = 0,y = 0,1} = 0 , \\ 
(u^P, v^P , \partial_y e^P , f^P )|_{y = 0,1} = 0  \end{cases}  
\end{equation} 
where \( h^P  := b^P - 1\) satisfying \(h^P (0,x,0) = h^P(0,x,1) = 0 \). 

\subsubsection{Notation and conventions.}
For any \(s \geq 0 \) and any function \(\phi\), we define \( \mathrm{A}_s \phi\) by  
\begin{equation}\label{eq:Definition of As}
    \mathrm{A}_s \phi := e^{(\delta _0 - \lambda(t)) \langle  \partial_x \rangle^{\frac{1}{2}}}  \langle  \partial_x \rangle^s \phi  = \mathcal{F}_x ^{-1} \left( e^{(\delta _0 - \lambda(t)) \langle  \xi \rangle^{\frac{1}{2}}} \langle  \xi \rangle^s \mathcal{F}_x \phi(\xi)\right)
\end{equation} 
where \(\langle \xi \rangle : = (1 +  \lvert \xi \rvert^2)^{\frac{1}{2}}, \delta _0\) is a fixed positive number and \(\lambda(t)\) is an increasing function. 
For \(p \in [1,\infty]\), we define the Gevrey-\(\frac{1}{2}\) norm with Sobolev correction \( \mathcal{G}^{ \delta ; s }_p\) by 
\begin{align*}
      \lVert  \phi \rVert_{\mathcal{G}^{   \delta ; s}_p} :&= \lVert  e^{\delta \langle  \partial_x \rangle^{\frac{1}{2}}} \langle \partial_x  \rangle^s  \phi \rVert_{L^p_y([0,1] ; L^2_x(\mathbb{R}))}   := \lVert  e^{\delta \langle  \xi \rangle^{\frac{1}{2}}} \langle  \xi \rangle^s \hat{\phi}  \rVert_{L^p_y([0,1] ; L^2_\xi(\mathbb{R}))} \\
      &:= \begin{cases} \left(\int_{0}^1 \left( \int_{\mathbb{R}} e^{2 \delta  \langle  \partial_x \rangle^{\frac{1}{2}}} \langle  \xi \rangle^{2 s}  \lvert  \mathcal{F}_x \phi(\xi, y) \rvert^2 \mathrm{\,d} \xi  \right)^{\frac{p}{2}} \mathrm{\,d} y\right)^{\frac{1}{p}} & p \in [1,\infty) , \\
     \sup\limits_{y \in [0,1]}\,  \left(\int_{\mathbb{R}} e^{2 \delta  \langle  \partial_x \rangle^{\frac{1}{2}}} \langle  \xi \rangle^{2 s}  \lvert  \mathcal{F}_x \phi(\xi, y) \rvert^2 \mathrm{\,d} \xi  \right)^{\frac{1}{2}} & p = \infty  .  \end{cases}   
\end{align*} 
Since for most of the paper, we are taking \( \delta := \delta _0 - \lambda(t)\) it is usually omitted unless specified. In particular: 
\[  \lVert  \phi  \rVert_{\mathcal{G}^{s}_p} := \lVert  e^{(\delta _0 - \lambda(t)) \langle  \partial_x \rangle^{\frac{1}{2}}} \langle  \partial_x \rangle^s \phi  \rVert_{L^p_y([0,1] ; L^2_x(\mathbb{R}))} . 
\] 
Moreover, we will employ the following shorthand to simplify the presentation of many quantities: for a finite sequence of functions \( (a_i)_{i \leq N}\) and a partial derivative \( \partial_X\), we denote 
\[  \lVert  (a_i)_{i\in N} \rVert_{\mathcal{G}^{s}_p} := \sum_{i \leq N } \lVert  a_i \rVert_{H}, \quad \lVert  \partial_X (a_i)_{i \in N} \rVert_{\mathcal{G}^{s}_p} := \sum_{i \leq N} \lVert \partial_X a_i \rVert_{\mathcal{G}^{s}_p}.   
\] 
As an example: 
\[  \lVert  \partial_y (a,b,c) \rVert_{\mathcal{G}^{s}_p} = \lVert  \partial_y a \rVert_{\mathcal{G}^{s}_p} + \lVert  \partial_y b \rVert_{\mathcal{G}^{s}_p} + \lVert  \partial_y c \rVert_{\mathcal{G}^{s}_p}.  
\] 
Finally, we use \( C_{\mathrm{in}} \geq   1 \) to denote constants that depend on \( u_{\mathrm{in}}, v_{\mathrm{in}}, e_{\mathrm{in}},f_{\mathrm{in}}, h_{\mathrm{in}}\) and \(0 <    c_0 \ll  1 \) for constants that depend on \( (e_{\mathrm{in}}, f_{\mathrm{in}}, h_{\mathrm{in}})\). We use \( C\) to represent a universal constant that may depend on \(\alpha, \beta, \gamma\) and change line to line; as a shorthand for \( f \leq C g\), we will sometimes use \( f  \lesssim  g\).  

\subsubsection{Statements of the theorems.}
Our first result is concerned with the uniform local well-posedness of the anisotropic system \eqref{eq:System for (u^e,h^e)} with initial data taken in Gevrey-2 class. 
\begin{theorem}[Local well-posedness]\label{thm:Local well-posedness}
    Let \(s \geq 10\) and fix a \(\delta _0 > 0\).  Suppose that \( (u_{\mathrm{in}}, h_{\mathrm{in}}, e_{\mathrm{in}}, f_{\mathrm{in}})\) satisfy  
    \begin{equation}\label{eq:Control on initial for Local well-posedness}
        \lVert  \partial_y (u_{\mathrm{in}}, e_{\mathrm{in}},f_{\mathrm{in}},h_{\mathrm{in}}) \rVert_{\mathcal{G}^{\delta _0  ; s + \frac{1}{2}}_2}^2 \leq C_{\mathrm{in}} , \quad \lVert  h_{\mathrm{in}}\rVert_{\mathcal{G}^{\delta _0  ; s + \frac{1}{2}}_2}^2 \leq c_0
    \end{equation}  
    with any \( C_{\mathrm{in}} > 0 \) and some \(c_0\) small enough independent of \( C_{\mathrm{in}}\). Then there is a \(T_0 > 0\), {such that for any \(\varepsilon > 0 \)}, the system \eqref{eq:System for (u^e,h^e)} has a unique solution in Gevrey-2 class for \(t \in [0,T_0]\). 

    Moreover, there is \(C > 0 \) such that for any \( t \in [0,T_0], \)  
    \begin{equation}\label{eq:Estimate on u for LW}
        \lVert  u^\varepsilon \rVert_{\mathcal{G}^{ \delta _0 / 2 ; s}_2}^2  + \lVert  \partial_y u^\varepsilon \rVert_{\mathcal{G}^{\delta _0 / 2 ; s - 2}_2}^2   +  \int_{0}^{T_0} \lVert  u^\varepsilon \rVert^2_{\mathcal{G}^{\delta _0 / 2 ; s + \frac{1}{4}}_2} \mathrm{\,d} t  + \int_{0}^{T_0} \lVert  \partial_t u^\varepsilon \rVert_{\mathcal{G}^{s-2}_2}^2 \mathrm{\,d} t  \lesssim {C_{\mathrm{in}}} ,   
    \end{equation} 
    and 
    \begin{equation}\label{eq:Estimate on h for LW}
        \lVert  \partial_y h^\varepsilon \rVert_{\mathcal{G}^{\delta _0 / 2  ; s  - \frac{3}{4}}_2}^2  + \lVert  h^\varepsilon \rVert_{\mathcal{G}^{\delta _0 / 2 ; s - \frac{1}{4} }_2} ^2+ \int_{0}^{T_0}  \lVert  \partial_y h^\varepsilon \rVert_{\mathcal{G}^{\delta _0 / 2  ; s  - \frac{1}{2}}_2}^2  \mathrm{\,d} t + \int_{0}^{T_0}  \lVert  h^\varepsilon \rVert_{\mathcal{G}^{\delta _0 / 2 ; s }_2} ^2  \mathrm{\,d} t +  \int_{0}^{T_0} \lVert  \partial_t h^\varepsilon \rVert_{\mathcal{G}^{s-\frac{3}{4}}_2}^2 \mathrm{\,d} t  \lesssim   c_0. 
    \end{equation}  
\end{theorem}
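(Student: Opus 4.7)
The plan is to construct the solution by a linearization/iteration scheme and close uniform-in-$\varepsilon$ energy estimates in $\mathcal{G}^{\delta_0/2;s}_2$, crucially exploiting the weighted dissipation
\begin{equation*}
    -\dot\lambda(t)\,\|\langle\partial_x\rangle^{1/4} \mathrm{A}_s\phi\|_{L^2}^2
\end{equation*}
generated by $\partial_t e^{(\delta_0-\lambda(t))\langle\partial_x\rangle^{1/2}} = -\dot\lambda(t)\langle\partial_x\rangle^{1/2} e^{(\delta_0-\lambda(t))\langle\partial_x\rangle^{1/2}}$. This is the standard Gevrey-$2$ mechanism: the decreasing radius yields a half-derivative gain that compensates the characteristic half-derivative loss coming from $v^\varepsilon\partial_y u^\varepsilon$ and $u^\varepsilon\partial_x h^\varepsilon$. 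I would first mollify the data and linearize the transport-type nonlinearities around the previous iterate $(u^{(n)},v^{(n)},h^{(n)})$, adding a small artificial viscosity $\eta\partial_{xx}$ to render each iterate genuinely parabolic; the uniform bounds below allow me to pass to the limit $n\to\infty$ and then $\eta\to 0$.

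The core a priori estimate is coupled. For the velocity, I apply $\mathrm{A}_s$ to the $u^\varepsilon$-equation and test against $\mathrm{A}_s u^\varepsilon$. The Lorentz term $\alpha(f^\varepsilon b^\varepsilon - u^\varepsilon(b^\varepsilon)^2)$ splits, via $b^\varepsilon = 1 + h^\varepsilon$, into a favorable linear damping $-\alpha u^\varepsilon$, a linear forcing $\alpha f^\varepsilon$, and quadratic remainders absorbed by the smallness $\|h^\varepsilon\|_{\mathcal{G}^s_\infty}\lesssim c_0^{1/2}$. The transport terms demand a paraproduct decomposition: writing $v^\varepsilon(x,y)=-\int_0^y\partial_x u^\varepsilon(x,y')\,dy'$, so that $\|v^\varepsilon\|_{L^\infty_y L^2_x}\lesssim\|\partial_x u^\varepsilon\|_{L^2_{x,y}}$, the worst high-low interaction in $(v^\varepsilon\partial_y u^\varepsilon,\mathrm{A}_s u^\varepsilon)$ costs exactly one $\langle\partial_x\rangle^{1/2}$, which is absorbed by the $\dot\lambda$-dissipation once $\dot\lambda$ is chosen comparable to the instantaneous $\mathcal{G}^{s+1/4}_2$ norm of $(u^\varepsilon,h^\varepsilon)$. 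The $\partial_y u^\varepsilon$ bound follows by applying $\partial_y$ to the equation and repeating the argument; the index drop to $s-2$ accommodates the top-frequency contribution of $\partial_y(v^\varepsilon\partial_y u^\varepsilon)$ after integration by parts in $y$ (using $v^\varepsilon|_{y=0,1}=0$).

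For the magnetic field I treat the equation as a damped wave
\begin{equation*}
    \beta\partial_{tt} h^\varepsilon + \partial_t h^\varepsilon - \gamma(\varepsilon^2\partial_{xx}+\partial_{yy})h^\varepsilon = -u^\varepsilon\partial_x h^\varepsilon - v^\varepsilon\partial_y h^\varepsilon
\end{equation*}
and introduce a Kawashima-type energy
\begin{equation*}
    E_h(t) := \|\mathrm{A}_{s-1/4}h^\varepsilon\|_{L^2}^2 + \beta\|\mathrm{A}_{s-3/4}\partial_t h^\varepsilon\|_{L^2}^2 + \gamma\|\mathrm{A}_{s-3/4}\partial_y h^\varepsilon\|_{L^2}^2 + 2\kappa\langle \mathrm{A}_{s-3/4}\partial_t h^\varepsilon,\mathrm{A}_{s-3/4}h^\varepsilon\rangle
\end{equation*}
with $\kappa$ small, obtained by testing the equation against $\mathrm{A}_{s-3/4}^2(\partial_t h^\varepsilon + \kappa h^\varepsilon)$. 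The cross term supplies genuine dissipation in both $\partial_t h^\varepsilon$ and $h^\varepsilon$, matching the two integrated quantities in \eqref{eq:Estimate on h for LW}. The critical nonlinearity $u^\varepsilon\partial_x h^\varepsilon$ is handled by paraproduct: the high-$\partial_x$ piece of $\partial_x h^\varepsilon$ is exactly one horizontal derivative, and the $3/4$-index drop combined with the $\dot\lambda$-dissipation of the Gevrey weight closes the bound; $v^\varepsilon\partial_y h^\varepsilon$ is handled by integrating $\partial_y$ by parts and using the parabolic dissipation $\gamma\|\partial_y h^\varepsilon\|^2$.

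The main obstacle I anticipate is the bookkeeping of the precise Sobolev indices across the coupled system so that every commutator, product, and auxiliary div-curl recovery closes consistently. The electric field $(e^\varepsilon,f^\varepsilon)$ is not an independent dynamical unknown at the same level as $(u^\varepsilon,h^\varepsilon)$: it is reconstructed from $\partial_t h^\varepsilon$ and $\partial_t b^\varepsilon|_{y=0,1}=0$ through Faraday's law, $\partial_x e^\varepsilon+\partial_y f^\varepsilon=0$, and the boundary conditions, which effectively forces the control of $(e^\varepsilon,f^\varepsilon)$ to sit at the $\mathrm{A}_{s-3/4}\partial_t h^\varepsilon$ level. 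The quadratic coupling $\alpha f^\varepsilon h^\varepsilon$ in the $u^\varepsilon$-equation then links the $u^\varepsilon$-index to the $\partial_t h^\varepsilon$-index, and it is this matching — together with the smallness of $c_0$ to absorb all $h^{\varepsilon}$-quadratic terms by the damping $-\alpha u^\varepsilon$ — that fixes the precise $1/4$-step corrections stated in \eqref{eq:Estimate on u for LW}–\eqref{eq:Estimate on h for LW}. Once the a priori bound closes on some $T_0=T_0(C_{\mathrm{in}},c_0)$ independent of $\varepsilon$, standard weak-compactness of the iterates together with a Gevrey-free $L^2$-type stability estimate for the difference of two solutions yields existence and uniqueness.
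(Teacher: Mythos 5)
Your overall architecture (time-decreasing Gevrey radius, $\mathcal{CK}$-dissipation from $\dot\lambda\langle\partial_x\rangle^{1/2}$, damping $-\alpha u^\varepsilon$ from the Lorentz force, smallness of $h$, recovery of $(e^\varepsilon,f^\varepsilon)$ from $\partial_t h^\varepsilon$ via the div--curl system) matches the paper's. But there is a genuine gap in how you close the magnetic-field estimate, and it sits exactly at the critical term $u^\varepsilon\partial_x h^\varepsilon$. Testing the $h$-equation at level $\sigma=s-\tfrac34$ against $\mathrm{A}_\sigma\partial_t h$, the best distribution of derivatives gives a bound of the form $\|\partial_t h\|_{\mathcal{G}^{\sigma+1/4}_2}\,\|u^\varepsilon\|\,\|h\|_{\mathcal{G}^{\sigma+3/4}_2}=\|\partial_t h\|_{\mathcal{G}^{\sigma+1/4}_2}\,\|u^\varepsilon\|\,\|h\|_{\mathcal{G}^{s}_2}$, so you must control $h$ a full $\tfrac34$ of a horizontal derivative above the base level $\sigma$. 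The Cauchy--Kovalevskaya term only supplies a gain of $\tfrac14$ per factor, and your Kawashima cross term $2\kappa\langle\mathrm{A}_{s-3/4}\partial_t h,\mathrm{A}_{s-3/4}h\rangle$, after differentiating in time and substituting the equation, produces dissipation of $\|\partial_y h\|^2$ and (via Poincar\'e) $\|h\|^2$ only \emph{at the same horizontal level} $s-\tfrac34$; it gains vertical regularity, not horizontal. So your energy does not control $\int\|h\|^2_{\mathcal{G}^{s}_2}$, the transport term cannot be absorbed, and the loss is a full horizontal derivative (not the ``half-derivative loss'' you assert), which the plain Gevrey-$2$ $\mathcal{CK}$ mechanism is insufficient to recover.

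The paper's fix is a specific identity exploiting the second-order-in-time structure: since $\partial_t[\mathrm{A}_{\sigma+1/4}h]=\mathrm{A}_{\sigma+1/4}\partial_t h-\dot\lambda\mathrm{A}_{\sigma+3/4}h$, one expands
\begin{equation*}
\dot\lambda\|\partial_t h\|_{\mathcal{G}^{\sigma+1/4}_2}^2=\dot\lambda\|\partial_t[\mathrm{A}_{\sigma+1/4}h]\|_{L^2}^2+\tfrac{\mathrm{d}}{\mathrm{d}t}\bigl(\dot\lambda^2\|h\|_{\mathcal{G}^{\sigma+1/2}_2}^2\bigr)-2\ddot\lambda\dot\lambda\|h\|_{\mathcal{G}^{\sigma+1/2}_2}^2+\dot\lambda^3\|h\|_{\mathcal{G}^{s}_2}^2,
\end{equation*}
so that the $\mathcal{CK}$ term for $\partial_t h$ secretly contains $\dot\lambda^3\|h\|^2_{\mathcal{G}^{s}_2}$ — an extra $\tfrac12$-derivative gain on $h$ itself beyond the usual $\tfrac14$. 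This is what makes $\sigma=s-\tfrac34$ the right offset and what produces the integrated $\|h^\varepsilon\|^2_{\mathcal{G}^{s}_2}$ term in \eqref{eq:Estimate on h for LW}. Your proposal needs this (or an equivalent device) in place of, or in addition to, the Kawashima cross term; without it the bootstrap does not close. A secondary, fixable point: you propose $\dot\lambda$ comparable to the instantaneous $\mathcal{G}^{s+1/4}_2$ norm of the solution, whereas the paper takes $\dot\lambda$ a constant multiple of $C_{\mathrm{in}}$ (so $\lambda$ is linear in $t$ and the identity above simplifies via $\ddot\lambda=0$); a solution-dependent $\dot\lambda$ would interact badly with the $\dot\lambda^2$ and $\dot\lambda^3$ weights in the expansion.
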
  
\begin{remark}\label{rmk:Estimates extend to system for u^p}
    Since the estimates obtained in the proof for Theorem \ref{thm:Local well-posedness} are independent of \(\varepsilon \in (0,1)\) so via the same strategy we can prove that the limiting system \eqref{eq:System for (u^p,h^p)} admits a solution on \( [0,T_0]\) with the same estimates \eqref{eq:Estimate on u for LW} and \eqref{eq:Estimate on h for LW} provided the initial condition satisfy \eqref{eq:Control on initial for Local well-posedness}. 
\end{remark}  
As claimed, the solutions for the scaled system \eqref{eq:System for (u^e,h^e)} converge to that of \eqref{eq:System for (u^p,h^p)}. More precisely, we will prove the following convergence rate: 
\begin{theorem}[Convergence rate]\label{thm:Convergence rate}
    The solution obtained from Theorem \ref{thm:Local well-posedness} converges to the solution of \eqref{eq:System for (u^p,b^p)} with the same initial data. Moreover there exists a constant \(C_{\mathrm{in},c_0, \delta _0}\) such that for any \(t \in [0,T_0] :\)
    \[ \lVert  u^\varepsilon - u^P \rVert_{\mathcal{G}^{\delta _0 / 2 ; s - 4}_2}^2 + \lVert  \partial_y (h^\varepsilon - h^P) \rVert_{\mathcal{G}^{\delta _0 / 2 ; s - \frac{3}{4} - 4}_2}^2  + \lVert  h^\varepsilon - h^P \rVert_{\mathcal{G}^{\delta _0 / 2 ; s - \frac{1}{4} - 4}_2}^2 \leq C_{\mathrm{in},c_0, \delta _0} \varepsilon ^4. 
    \] 
\end{theorem}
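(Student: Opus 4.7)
The plan is to perform a Gevrey energy estimate on the difference functions $u^d := u^\varepsilon - u^P$, $v^d := v^\varepsilon - v^P$, $e^d := e^\varepsilon - e^P$, $f^d := f^\varepsilon - f^P$ and $h^d := h^\varepsilon - h^P$, mimicking the proof of Theorem \ref{thm:Local well-posedness} but at the lower regularity level $s-4$. Subtracting \eqref{eq:System for (u^p,h^p)} from \eqref{eq:System for (u^e,h^e)}, the $u^d$-equation reads schematically
\begin{equation*}
\partial_t u^d + u^\varepsilon \partial_x u^d + v^\varepsilon \partial_y u^d + u^d \partial_x u^P + v^d \partial_y u^P - \partial_{yy} u^d + \partial_x p^d = \mathcal{N}^u + \varepsilon^2 \partial_{xx} u^\varepsilon,
\end{equation*}
the hydrostatic relation in the $v$-equation becomes
\begin{equation*}
\partial_y p^d = -\alpha\bigl(e^d b^\varepsilon + e^P h^d + v^d (b^\varepsilon)^2 + v^P(b^\varepsilon + b^P) h^d\bigr) - \varepsilon^2 \mathcal{R}^v,
\end{equation*}
with $\mathcal{R}^v := \partial_t v^\varepsilon + u^\varepsilon \partial_x v^\varepsilon + v^\varepsilon \partial_y v^\varepsilon - \varepsilon^2 \partial_{xx} v^\varepsilon - \partial_{yy} v^\varepsilon$, while the wave equation for $h^d$ acquires the forcing $\gamma \varepsilon^2 \partial_{xx} h^\varepsilon$. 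Here $\mathcal{N}^u$ and the analogous remainders gather the differences of the Lorentz-force and Ohm-type source terms, which are linear in the differences modulo coefficients that are uniformly bounded through Theorem \ref{thm:Local well-posedness}.

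I would then apply $\mathrm{A}_{s-4}$ to each difference equation and pair with the natural dissipative multipliers $\mathrm{A}_{s-4} u^d$, $\mathrm{A}_{s-4} \partial_y h^d$, $\mathrm{A}_{s-4} h^d$, exactly as in the proof of Theorem \ref{thm:Local well-posedness}. The commutator and transport terms are controlled by the Gevrey product rule using the uniform-in-$\varepsilon$ bounds of Theorem \ref{thm:Local well-posedness} and Remark \ref{rmk:Estimates extend to system for u^p}, applied to all factors built out of $(u^\varepsilon, v^\varepsilon, h^\varepsilon, u^P, v^P, h^P)$. The damping $-\alpha(b^\varepsilon)^2 v^d$ is transferred into an effective $\langle\partial_x\rangle^{1/2}$-damping on $u^d$ upon integrating the pressure term $\partial_x p^d$ by parts and invoking the divergence-free identity $v^d = -\partial_x \int_0^y u^d \, \mathrm{d} y'$, which is precisely how one handles the loss of one horizontal derivative. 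The genuine $\varepsilon^2$ forcings $\varepsilon^2 \partial_{xx} u^\varepsilon$ and $\varepsilon^2 \partial_{xx} h^\varepsilon$, measured in $\mathcal{G}^{\delta_0/2; s-4}_2$, are dominated by $\varepsilon^2 \|(u^\varepsilon, h^\varepsilon)\|_{\mathcal{G}^{\delta_0/2; s-2}_2}$, which is $O(\varepsilon^2)$ by \eqref{eq:Estimate on u for LW}--\eqref{eq:Estimate on h for LW}, so after Cauchy--Schwarz they inject an $\varepsilon^2$-scale source into the Gevrey energy inequality; squaring and Gr\"onwall will then deliver the announced $\varepsilon^4$ bound with constant $C_{\mathrm{in}, c_0, \delta_0}$.

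The main obstacle is the pressure difference $\partial_x p^d$, which is not of lower order and which, unlike in Theorem \ref{thm:Local well-posedness}, now carries the extra $\varepsilon^2 \mathcal{R}^v$ contribution. Testing $\partial_x p^d$ against $u^d$, integrating by parts in $x$ and then in $y$ via the relation above produces the term $\varepsilon^2 \int \mathcal{R}^v v^d$, whose estimation requires Gevrey bounds on $\partial_t v^\varepsilon$, $\partial_{yy} v^\varepsilon$ and $v^\varepsilon \partial_x v^\varepsilon$. Using $v^\varepsilon = -\partial_x \int_0^y u^\varepsilon \, \mathrm{d} y'$ these reduce to control on $\partial_x \partial_t u^\varepsilon$, $\partial_x \partial_y u^\varepsilon$ and quadratic expressions in $\partial_x u^\varepsilon$ at Gevrey level roughly $s-3$, which by \eqref{eq:Estimate on u for LW} are available only after two additional derivatives are spent beyond the naive $s-2$ level suggested by the $\varepsilon^2\partial_{xx}$ forcings; this accumulation of horizontal-derivative losses is precisely what forces the working regularity down to $s-4$. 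Once these auxiliary bounds are in place, a closed Gr\"onwall loop on the combined Gevrey energy of $(u^d, \partial_y h^d, h^d)$ over $[0,T_0]$ concludes the proof.
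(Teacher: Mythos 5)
Your proposal follows essentially the same route as the paper: form the difference system, isolate the $O(\varepsilon^2)$ residuals (in particular the delicate $\varepsilon^2(\partial_t v^\varepsilon + \cdots)$ contribution to the hydrostatic relation, which the paper controls exactly as you suggest, through the $L^2_t$ bound on $\partial_t u^\varepsilon$ at Gevrey level $s-3$ supplied by Theorem \ref{thm:Local well-posedness}), run the Gevrey energy estimate at level $s-4$ using the uniform bounds on $(u^\varepsilon,u^P,h^\varepsilon,h^P)$, and close with Gr\"onwall to obtain the $\varepsilon^4$ rate. One small correction: the multiplier for the damped-wave equation of $h^d$ should be $\mathrm{A}_{\sigma-4}\partial_t h^d$ (as in the local theory), not $\mathrm{A}_{s-4}\partial_y h^d$ or $\mathrm{A}_{s-4}h^d$, since only the $\partial_t h^d$ pairing yields the coercive energy $\lVert(\partial_t,\partial_y)h^d\rVert^2$ appearing in the statement; likewise the gain of $\tfrac14$ horizontal derivative comes from the shrinking Gevrey radius (the $\mathcal{CK}$ terms), while the $-\alpha(b^\varepsilon)^2v^d$ damping only furnishes the zeroth-order control $\alpha\lVert(u^d,v^d)\rVert^2$ needed to absorb the derivative loss in $v^d=-\partial_x\int_0^y u^d$.
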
   
Finally, we show that given smallness on the initial data, the solutions are global in time. 
\begin{theorem}[Global well-posedness]\label{thm:Global well-posedness}
    For all \(\delta _0 >0  \) and \(s \geq 10\) there exists a \(\kappa _0 > 0\) such that if \((u_{\mathrm{in}}, h_{\mathrm{in}}, e_{\mathrm{in}},f_{\mathrm{in}})\) satisfy 
    \[  \lVert  \partial_y (u_{\mathrm{in}}, e_{\mathrm{in}},f_{\mathrm{in}},h_{\mathrm{in}}) \rVert_{\mathcal{G}^{\delta _0 ; s + \frac{1}{2}}_2}^2 \leq \kappa ^2  
    \] 
    for any \(\kappa \leq \kappa _0\) then the system \eqref{eq:System for (u^e,b^e)} has a global unique solution. Moreover, it holds that 
    \[  \lVert  u^\varepsilon \rVert_{\mathcal{G}^{\delta _0 / 2 ; s}_2}^2 + \lVert  \partial_y h^\varepsilon \rVert_{\mathcal{G}^{\delta _0 / 2 ; s}_2}^2 \leq C \kappa ^2 e^{- \frac{\theta}{2} t } 
    \] 
    where \(\theta > 0 \) is a universal constant.
\end{theorem}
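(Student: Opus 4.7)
The argument is a continuation/bootstrap from Theorem~\ref{thm:Local well-posedness}, built around a time-global Gevrey-2 energy functional with a slowly decreasing radius $\delta(t)=\delta_0-\lambda(t)$. The target is a differential inequality of the form $\tfrac{d}{dt}\mathcal{E}(t)+\theta\mathcal{E}(t)\leq 0$, integration of which yields the stated exponential decay, and which, as long as $\mathcal{E}(t)$ remains small, forces $\lambda(t)\leq\delta_0/2$ so that the solution never leaves the Gevrey class.

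\textbf{Sources of linear damping.} Two mechanisms drive the decay at the linear level. First, expanding $-\alpha u^\varepsilon(b^\varepsilon)^2=-\alpha u^\varepsilon-2\alpha u^\varepsilon h^\varepsilon-\alpha u^\varepsilon(h^\varepsilon)^2$ in the momentum equation produces a linear damping $-\alpha u^\varepsilon$; combined with the vertical Dirichlet diffusion this yields exponential decay of $u^\varepsilon$ via Poincaré. Second, the magnetic equation
\[
\beta\partial_{tt}h^\varepsilon+\partial_t h^\varepsilon-\gamma(\varepsilon^2\partial_{xx}+\partial_{yy})h^\varepsilon+u^\varepsilon\partial_x h^\varepsilon+v^\varepsilon\partial_y h^\varepsilon=0
\]
is a damped wave equation, whose natural energy $\beta\|\partial_t h^\varepsilon\|^2+\gamma\|\partial_y h^\varepsilon\|^2$ together with a Kawashima-type cross term $\sigma\langle\partial_t h^\varepsilon,h^\varepsilon\rangle$ furnishes a coercive dissipation (using $h^\varepsilon|_{y=0,1}=0$ and Poincaré), producing exponential decay of $(\partial_t h^\varepsilon,\partial_y h^\varepsilon,h^\varepsilon)$ on the linearized level.

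\textbf{Main steps.} First, I would define
\begin{multline*}
\mathcal{E}(t) := \|u^\varepsilon\|_{\mathcal{G}^{s}_2}^2+\|\partial_y u^\varepsilon\|_{\mathcal{G}^{s}_2}^2+\|\partial_y h^\varepsilon\|_{\mathcal{G}^{s}_2}^2\\
+\beta\|\partial_t h^\varepsilon\|_{\mathcal{G}^{s}_2}^2+\sigma\,\langle\mathrm{A}_{s}\partial_t h^\varepsilon,\mathrm{A}_{s} h^\varepsilon\rangle_{L^2}
\end{multline*}
for a small parameter $\sigma>0$, along with a companion dissipation $\mathcal{D}(t)$ collecting the vertical $H^1_y$ contributions coming from $-\partial_{yy}$, the term $\|\partial_t h^\varepsilon\|_{\mathcal{G}^{s}_2}^2$ from wave damping, the term $\alpha\|u^\varepsilon\|_{\mathcal{G}^{s}_2}^2$ from Ohmic damping, and the horizontal gain $\lambda'(t)\|\langle\partial_x\rangle^{1/4}\mathrm{A}_s\phi\|_{L^2}^2$ summed over $\phi\in\{u^\varepsilon,\partial_y u^\varepsilon,\partial_y h^\varepsilon,\partial_t h^\varepsilon\}$, produced by differentiating the Gevrey weight $e^{\delta(t)\langle\partial_x\rangle^{1/2}}$. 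Second, perform the energy identity by testing the $u^\varepsilon$ equation against $\mathrm{A}_{s}^2 u^\varepsilon$, its $\partial_y$-derivative against $\mathrm{A}_{s}^2\partial_y u^\varepsilon$, and the $h^\varepsilon$ equation against $\mathrm{A}_{s}^2(\beta\partial_t h^\varepsilon+\sigma h^\varepsilon)$, integrating by parts in $y$ using the Dirichlet data; schematically this gives
\[
\tfrac{d}{dt}\mathcal{E}(t)+\theta_0\mathcal{D}(t)\leq \mathcal{N}(t),
\]
with $\theta_0>0$ depending only on $\alpha,\beta,\gamma$, the Poincaré constant and the smallness of $\sigma$. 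Third, bound the nonlinear remainder $\mathcal{N}(t)$, consisting of the transport contributions and the $u^\varepsilon\partial_x h^\varepsilon$ coupling, by $\sqrt{\mathcal{E}(t)}\,\mathcal{D}(t)+\mathcal{E}(t)\sqrt{\mathcal{D}(t)}$ using Gevrey product and commutator estimates together with the reconstruction $v^\varepsilon=-\partial_x\int_0^y u^\varepsilon\,dy'$; the leftover half-derivative loss from $u^\varepsilon\partial_x h^\varepsilon$ that cannot be absorbed by the quarter-derivative in $\mathcal{D}$ is matched by choosing $\lambda'(t)$ comparable to $\|u^\varepsilon\|_{\mathcal{G}^{s+1/4}_2}$, which is exactly the Gevrey-2 threshold. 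Fourth, Poincaré furnishes $\mathcal{E}\lesssim\mathcal{D}$, so for $\mathcal{E}(t)\leq C\kappa^2$ along the bootstrap with $\kappa\leq\kappa_0$ sufficiently small, the nonlinear term is absorbed into $\theta_0\mathcal{D}(t)/2$, leaving $\tfrac{d}{dt}\mathcal{E}(t)+\tfrac{\theta}{2}\mathcal{E}(t)\leq 0$. Gronwall then gives $\mathcal{E}(t)\leq C\kappa^2 e^{-\theta t/2}$, strictly improving any prior bootstrap bound; hence the solution extends globally and $\lambda(t)$ stays below $\delta_0/2$ for all $t$.

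\textbf{Main obstacle.} The hard part is the simultaneous calibration of three parameters: the Kawashima constant $\sigma$ (small enough that the cross term $\sigma\langle\partial_t h^\varepsilon,h^\varepsilon\rangle$ is dominated by $\beta\|\partial_t h^\varepsilon\|^2+\|h^\varepsilon\|^2$, yet large enough that differentiating it in time produces a genuine $\sigma\gamma\|\partial_y h^\varepsilon\|^2$ damping that beats the wrong-sign term $\sigma\beta\|\partial_t h^\varepsilon\|^2$ coming from $\partial_t(\partial_t h^\varepsilon\cdot h^\varepsilon)$), the Gevrey-radius loss rate $\lambda'(t)$ (large enough to absorb the full $\langle\partial_x\rangle^{1/2}$ loss from $u^\varepsilon\partial_x h^\varepsilon$, yet small enough globally that $\int_0^\infty\lambda'(t)\,dt\leq\delta_0/2$), and the smallness threshold $\kappa_0$. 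Securing the equivalence $\mathcal{E}\sim\|u\|^2+\|\partial_y u\|^2+\|\partial_y h\|^2+\|\partial_t h\|^2$ in the $\mathcal{G}^s_2$-norm and the coercivity $\mathcal{D}\gtrsim\mathcal{E}$ with a positive $\theta$ independent of $\varepsilon$ is the delicate technical step on which both $\theta$ and $\kappa_0$ ultimately depend.
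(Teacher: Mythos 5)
Your overall scheme --- bootstrap from the local theory, a Gevrey-weighted energy with shrinking radius $\delta_0-\lambda(t)$, linear damping of $u^\varepsilon$ from the expansion of $-\alpha u^\varepsilon(b^\varepsilon)^2$, and exponential decay of $h^\varepsilon$ from the damped-wave structure plus Poincar\'e --- is the same as the paper's. Where you genuinely diverge is in how the decay of $h^\varepsilon$ is extracted: you add a Kawashima/Lyapunov cross term $\sigma\langle \mathrm{A}_s\partial_t h,\mathrm{A}_s h\rangle$ and test against $\mathrm{A}_s^2(\beta\partial_t h+\sigma h)$, whereas the paper conjugates all unknowns by $e^{\theta t}$ (working with $\tilde{\phi}=e^{\theta t}\phi$), which turns the sought decay into a uniform-in-time bound for $\tilde h$ at the price of a negative term $-(\theta-\theta^2)\lVert\tilde h\rVert^2$ in the energy, kept harmless by pinning $\theta$ to the Poincar\'e constant via $\lVert\partial_y\phi\rVert\ge 10\theta\lVert\phi\rVert$. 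Both mechanisms are standard for the linear damped wave equation with Dirichlet data and both close; your version keeps the energy manifestly equivalent to the natural one at the cost of the $\sigma$-calibration you already identify, while the paper's version trades that for a constrained (neither too large nor too small) $\theta$ and a cleaner set of perfect time derivatives.

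Two calibration points in your plan would need repair. First, your choice $\dot\lambda(t)\sim\lVert u^\varepsilon\rVert_{\mathcal{G}^{s+1/4}_2}$ is not viable: that norm sits inside the dissipation only as $\dot\lambda\lVert u\rVert^2_{\mathcal{G}^{s+1/4}_2}$, so it is neither pointwise-in-time controlled by the energy nor obviously integrable, and the choice is circular. What actually sets $\dot\lambda$ is the \emph{low} norm of the velocity multiplying the critical paraproduct piece of $u^\varepsilon\partial_x h^\varepsilon$; under the bootstrap this is $\lesssim\kappa e^{-\theta t/2}$, so the paper can simply fix $\dot\lambda(t)=16e^{-\theta t/(3\delta_0)}$ deterministically, which is integrable with total loss $\le\delta_0/2$. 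Second, testing the $\partial_y$-differentiated momentum equation against $\mathrm{A}_s^2\partial_y u^\varepsilon$ at top order produces the boundary term $[\partial_{yy}u\,\partial_y u]_{y=0}^{1}$, which does not vanish for the nonlinear problem (pressure and forcing do not vanish on $y=0,1$); the paper avoids this by obtaining $\tfrac{d}{dt}\lVert\partial_y u\rVert^2$ from testing against $\mathrm{A}_{s-2}^2\partial_t u$ at lower regularity. Both issues are fixable within your framework, but as written they are the places where the argument would break.
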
  
\begin{remark}\label{rmk:Comparision with Ping Zhang work}
    This theorem is surprising when contrasted with the results of \cite{MR4753440}: In that paper, the authors considered the system \eqref{eq:The NSM system} with a 2D-magnetic field (as opposed to our one-directional magnetic field) near \((1,0)\) and initial data posed in Gevrey-2 class. In their setup, however, only a local result seems tractable. Indeed, when \((u,v,b_1, b_2, e)\) is near \( (0,0,1,0,0)\) then the linearised equations for their limiting system are 
    \[ \begin{cases} \partial_t u - \partial_y ^2 u  + \partial_x p  = 0 ,  & (t,x,y) \in [0,T] \times  \mathbb{R} \times  [0,1] , \\ 
    \partial_y p  = - v + e , \\ 
\partial_{tt} \phi  + \partial_t \phi - \partial_{yy} \phi + v = 0 , \\ 
\partial_t \phi + e = 0  = \mathrm{div\,}(u,v) = \mathrm{div\,}(b_1, b_2) ,  \\ 
(u,\phi)|_{y = 0,1} = 0 ,  \end{cases}   
    \] 
    where the systemic constants \(\alpha,\beta,\gamma\) are omitted for clarity and \( \nabla^\perp \phi  = (b_1  - 1, b_2)\). Then the standard energy estimate for this system is 
    \[  \frac{1}{2}\frac{\mathrm{d}}{\mathrm{d}t}  \left(\lVert  u \rVert_{L^2}^2 + \lVert  \partial_t \phi \rVert_{L^2}^2 + \lVert  \partial_y \phi \rVert_{L^2}^2\right)  + \lVert  \partial_y u  \rVert_{L^2}^2 + \lVert  \partial_t \phi + v \rVert_{L^2}^2 = 0 . 
    \] 
    A decay estimate for the corresponding solution seems impossible. However, compare this to the linearised limiting system that arises when \((u,v,b,e,f)\) is near \((0,0,1,0,0)\): 
    \[ \begin{cases} \partial_t u  - \partial_{yy} u + \partial_x p = f - u , & (t,x,y) \in [0,T] \times  \mathbb{R} \times  [0,1] , \\ 
    \partial_y p = - e - v , \\
        \partial_t b = \mathrm{curl\,}(e,f) , \\ 
        \partial_{tt} b + \partial_t b - \partial_{yy} b  = 0 , \\ \mathrm{div\,}(u,v) = \mathrm{div\,}(e,f) = 0, \\
(u,v,\partial_y e, f, b - 1)|_{y =0,1} = 0.  \end{cases}   
    \] 
    Then the decay of \(u\) is easily obtained from the first two equation as 
    \[  \frac{1}{2}\frac{\mathrm{d}}{\mathrm{d}t} \lVert  u \rVert_{L^2}^2   + \lVert  \partial_y u \rVert_{L^2}^2  + \lVert  u \rVert_{L^2}^2 + \lVert  v \rVert_{L^2}^2 = 0 ,
    \] 
    whereas for \(b - 1\) we have an exponential decay in time due to the damped-wave structure as can be seen by performing separation of variables and noting the time-function decays like \(e^{-\theta t}\) for some \(\theta > 0; \) see also \cite{kafnemer2022lp}. 
\end{remark} 

\begin{remark}\label{rmk:Comparision with Sobolev for MHD}
    We would also like to compare our results with \cite{wang2022hydrostatic} where the authors were able to prove the convergence of the MHD system in a thin strip in Sobolev classes. The problem in our case arises due to the magnetic field \(b^P\) in \eqref{eq:System for (u^p,b^p)} appearing in a wave-like structure as opposed to a transport-structure in \cite{wang2022hydrostatic}. 
\end{remark}

In the classical Prandtl theory, the theory of well-posedness is generally obtained under the assumptions of analyticity or monotonicity of the initial data and as such, when such assumptions are no longer present, some instabilities can develop resulting in the failure of the Prandtl model, at least globally in space-time if not locally. For example, it was shown in \cite{MR1476316} that smooth solutions of the Prandtl equation do not always exist globally in time. Similarly, asymptotic derivation of the Prandtl model was shown to be invalid in \(W^{1,\infty}\) due to some counterexamples presented by Grenier in \cite{MR1761409}. Moreover, Gerard and Dormy, in \cite{gerard2010ill}, established a linear ill-posedness result for the Prandtl problem in the Sobolev space setting; more precisely, they showed the existence of solutions that grow like \( e^{\sqrt{k} t}\) for high-frequencies in \( k \) in \(x\) for the Prandtl equation linearised around the a non-monotonic shear flow. For the theory of hydrostatic Euler and Navier-Stokes, there has been ill-posedness results; most notably of Renardy for a class of parallel shear flows \cite{MR2563627} and \cite{MR3509003}. For the theory of triple-deck model, \cite{gerard2010ill} shows that the result of Iyer and Vicol \cite{MR4275335} of local well-posedness for analytic data is essentially optimal.  

The survey of above results suggests ill-posedness to occur for Gevrey-\(2+\). Unsurprisingly, the main issue is from the equation for the magnetic field as the energy estimates for the velocity can be closed in Sobolev spaces due to damping of vertical velocities and horizontal diffusion. However, the equation for the magnetic field poses the main issue: particularly the transport term \( u \partial_x h\). This is due to a loss of horizontal derivative in \(h\) which is not recoupable via the gain in regularity by the \(\mathcal{CK}\)-terms (as they only provide a gain of \(\frac{3}{4}\) derivatives). 
As such, in the latter half of the paper, we will show linear ill-posedness for the damped wave equation for the magnetic field for a Poiseuille flow velocity by constructing solutions that exhibit growth like \( e^{\sqrt{k}t}\) for high-frequency \(k\) in \(x\).   

More precisely, we will consider the linearised limit system in \( (t,x,y) \in [0,T] \times \mathbb{T} \times  [0,1] : \)
\begin{equation}\label{eq:ill-posedness:System for h}
    \begin{cases}  
       \partial_{tt} h + \partial_t h - \partial_{yy} h +y(1 - y) \partial_x h  = 0 ,   \\
       h|_{y = 0,1}  = \partial_t h |_{y = 0,1}= 0 , \\ h|_{t = 0 } =\zeta, \quad \partial_t h|_{ t =0} = \zeta ^1
    \end{cases}  
\end{equation}   
and prove that the linearized equations are ill-posed in the sense of Hadamard: 
\begin{theorem}[Ill-posedness]\label{thm:Results:ill-posedness for the Linear system for h}
   For any \(s < \frac{1}{2}\) and \( C_0 > 0 \), there exists \(M\) such that for \( \lvert k \rvert > M\), there exists initial data \(\zeta , \zeta ^1\) which is of the form 
   \begin{align*}
       &\zeta(x,y)=a_k(y)\cos kx+b_k(y)\sin kx,\\
       &\zeta ^1(x,y)=c_k \zeta(x,y).
   \end{align*}
   with \( \lVert  (a_k, b_k) \rVert_{ L^2_y} \lesssim 1\) and $|c_k|\approx \sqrt{|k|}$ such that the solution \( h_k\) for the system \eqref{eq:ill-posedness:System for h} which exhibits exponential growth: 
   \begin{align*}
       \lVert  h_k(t) \rVert_{L^2_x H^1_y}\gtrsim e^{\sqrt{|k|}t}-Ce^{-C_0^{-1}\sqrt{|k|}}
   \end{align*}
   and in particular, for all \(t \in [T_k, T_0)\) where  \(T _k\approx_{C_0}  \lvert k \rvert^{s - \frac{1}{2}}   \ll  1 \)  and \(T_0 > 0\) is independent of \(k,\)
   \begin{equation}\label{eq:Results:ExponentialGrowthForh}
        \lVert  h_k(t) \rVert_{L^2_x H^1_y} \gtrsim C_0^{\frac{1}{2} - s} e^{C_0^{-1}\lvert k \rvert^s}. 
   \end{equation} 
\end{theorem}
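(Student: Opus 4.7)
For each large frequency $k$ I will construct a growing Fourier mode of the linearized equation and exhibit initial data of the required form that excites it. With the ansatz $h_k(t,x,y) = \mathrm{Re}\bigl(\Phi_k(t,y)\, e^{ikx}\bigr)$, the amplitude satisfies a non-self-adjoint damped wave-type equation on $[0,1]$ with Dirichlet BCs,
\[
    \partial_{tt}\Phi_k + \partial_t \Phi_k + \mathcal{L}_k\Phi_k = 0, \qquad \mathcal{L}_k := -\partial_{yy} + iky(1-y).
\]
The separation $\Phi_k = e^{\lambda t}\psi_k$ with $\mathcal{L}_k\psi_k = \nu_k\psi_k$ reduces the problem to the characteristic equation $\lambda^2 + \lambda + \nu_k = 0$, so the first task is a quasi-eigenpair $(\psi_k, \nu_k)$ whose associated root $\lambda_k$ has $\mathrm{Re}\,\lambda_k \asymp \sqrt{|k|}$.

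Since $y(1-y) = \tfrac14 - (y-\tfrac12)^2$ is exactly quadratic, the Gaussian ansatz $\tilde\psi_k(y) := \exp\bigl(-\tfrac{A}{2}(y-\tfrac12)^2\bigr)$ with $A \in \mathbb{C}$ chosen so $A^2 = -ik$ and $\mathrm{Re}\,A > 0$---namely $A := \sqrt{|k|}\,e^{-i\pi\,\mathrm{sgn}(k)/4}$---is an \emph{exact} eigenfunction of $\mathcal{L}_k$ on the whole line with eigenvalue $\nu_k = A + \tfrac{ik}{4}$. Its modulus $\exp\bigl(-\tfrac{\sqrt{|k|}}{2\sqrt2}(y-\tfrac12)^2\bigr)$ is concentrated in a band of width $|k|^{-1/4}$ about $y = \tfrac12$ and of size $\exp(-c\sqrt{|k|})$ at the boundary. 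Multiplying by a smooth cutoff $\chi\in C^\infty_c((0,1))$ equal to $1$ on $[\tfrac14, \tfrac34]$ yields $\psi_k := \chi\tilde\psi_k$, which satisfies the Dirichlet BCs exactly and, after $L^2$-normalization, obeys
\[
    \mathcal{L}_k \psi_k = \nu_k\psi_k + R_k, \qquad \|R_k\|_{L^2} \leq Ce^{-c\sqrt{|k|}},
\]
the only error---the commutator $[-\partial_{yy},\chi]\tilde\psi_k$---being supported where $\tilde\psi_k$ is already exponentially small. The root of $\lambda^2 + \lambda + \nu_k = 0$ with positive real part is $\lambda_k = \tfrac{1}{2\sqrt2}\sqrt{|k|}\,(1-i) + O(1)$, which delivers the desired growth rate.

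For the initial data I set $c_k := \mathrm{Re}\,\lambda_k \in \mathbb{R}$ (so $|c_k|\asymp\sqrt{|k|}$), $a_k := \mathrm{Re}\,\psi_k$, $b_k := -\mathrm{Im}\,\psi_k$, and $\zeta^1 := c_k\zeta$. Solving the Fourier-mode time ODE, $\hat h_k$ is, up to the exponentially small effect of $R_k$, a combination $A_+ e^{\lambda_k t}\psi_k + A_- e^{\lambda_k^- t}\psi_k$ with $|A_+|\gtrsim 1$ by direct algebra using $|\lambda_k^\pm|\asymp\sqrt{|k|}$. To extract the lower bound I pair against the dual quasi-mode $\psi_k^* := \overline{\psi_k}$, an approximate eigenfunction of the adjoint $\mathcal{L}_k^* = -\partial_{yy} - iky(1-y)$ with eigenvalue $\overline{\nu_k}$: the scalar $\sigma(t) := \langle \hat h_k(t,\cdot), \psi_k^*\rangle$ satisfies the forced scalar ODE $\sigma'' + \sigma' + \overline{\nu_k}\sigma = g_k(t)$ with $|g_k(t)|\leq Ce^{-c\sqrt{|k|}}e^{\mathrm{Re}(\lambda_k)t}$, whose Duhamel representation yields $|\sigma(t)| \gtrsim e^{\mathrm{Re}(\lambda_k)t} - Ce^{-c\sqrt{|k|}}e^{\mathrm{Re}(\lambda_k)t}$. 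Combining $\|\hat h_k\|_{L^2_y} \geq |\sigma|/\|\psi_k^*\|_{L^2}$ with $\|h_k\|_{L^2_x H^1_y}\gtrsim\|\Phi_k\|_{L^2_y}$ then delivers the claimed bound, and the quantitative form at $t = T_k\asymp|k|^{s-1/2}$ follows from $\mathrm{Re}(\lambda_k)\,T_k\asymp|k|^s$.

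The principal obstacle will be controlling the error between the quasi-mode evolution and the true Fourier-mode solution: since $\mathcal{L}_k$ is non-self-adjoint, its eigenfunctions need not form a Riesz basis of $L^2([0,1])$ and standard $L^2$ energy methods do not close. The bi-orthogonal pairing with $\psi_k^*$ sidesteps this but requires careful tracking of the resolvent-type denominator $\overline{\nu_k} - \nu_k \asymp ik$ that appears when inverting $\lambda^2 + \lambda + \overline{\nu_k}$ against the forced source, so that the $e^{-c\sqrt{|k|}}$ size of $R_k$ is not degraded beyond the main growing term. This balance is precisely what dictates the validity window $T_0$ and the minimum time $T_k$ in the statement, below which exponential growth becomes indistinguishable from the initial-data contribution.
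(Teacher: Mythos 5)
Your construction follows the same quasi-mode strategy as the paper — the exact Gaussian eigenfunction $e^{-\frac{A}{2}(y-\frac12)^2}$ of the complex harmonic oscillator with $A^2=-ik$, a root $\lambda_k$ of $\lambda^2+\lambda+\nu_k=0$ with $\mathrm{Re}\,\lambda_k\asymp\sqrt{|k|}$, an exponentially small correction to enforce the Dirichlet conditions, and a perturbative control of the remainder — but the implementation differs in two places. First, the paper repairs the boundary condition by subtracting the constant $f_k(0)$ (exploiting the symmetry $f_k(0)=f_k(1)$), which keeps the profile an exact solution of the ODE at the price of an explicit forcing $\mathscr{F}$ of size $e^{-\theta_1\sqrt{|k|}}$; you instead multiply by a cutoff, which produces a commutator residual $R_k$ of the same exponential smallness. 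These are interchangeable. Second, the paper controls the remainder $h_k^2$ by a direct Gevrey-weighted energy estimate on the forced damped wave equation (Proposition \ref{prop:ill-posedness:Global bound on h2}, with $e^{(\delta-\lambda(t))\sqrt{|k|}}$ and $\dot\lambda$ a large constant absorbing the growth), and gets the lower bound on the main part by explicitly computing $\lVert h_k^1\rVert_{L^2_xH^1_y}$; you route both through a bi-orthogonal pairing with the adjoint quasi-mode $\overline{\psi_k}$ and a Duhamel formula for the resulting scalar ODE. Your route is arguably cleaner for extracting the lower bound (no oscillatory-integral computation of $\lVert h_k^1\rVert$), and the non-degeneracy $\int\psi_k^2\,dy\asymp|k|^{-1/4}\asymp\lVert\psi_k\rVert_{L^2}^2$ does hold, so the pairing is not ill-conditioned.

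One step is circular as written: the forcing in your scalar ODE is $g_k(t)=-\langle\hat h_k(t),\overline{R_k}\rangle$, and the bound $|g_k(t)|\le Ce^{-c\sqrt{|k|}}e^{\mathrm{Re}(\lambda_k)t}$ presupposes an a priori upper bound on $\lVert\hat h_k(t)\rVert_{L^2}$ at the \emph{sharp} rate $\mathrm{Re}\,\lambda_k$, which is essentially what the argument is supposed to establish. The fix is the same ingredient the paper uses: a crude energy estimate for the full Fourier-mode equation ($\tilde E:=\lVert\partial_t\Phi_k\rVert^2+\lVert\partial_y\Phi_k\rVert^2+|k|\lVert\Phi_k\rVert^2$ satisfies $\frac{d}{dt}\tilde E\le C_1\sqrt{|k|}\,\tilde E$ for a universal $C_1$ possibly larger than $\mathrm{Re}\,\lambda_k/\sqrt{|k|}$), after which the Duhamel term is bounded by $e^{-c\sqrt{|k|}}e^{C_1\sqrt{|k|}t}$, which stays below $e^{\mathrm{Re}(\lambda_k)t}$ precisely for $t\le T_0$ with $T_0$ small but independent of $k$. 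You correctly identify this as the source of the restriction to $[T_k,T_0)$, but the estimate itself must be supplied; without it the claimed bound on $g_k$ has no justification.
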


\begin{remark}\label{rmk:Results:ill-posedness in full space}
    Theorem \ref{thm:Local well-posedness} also holds in the periodic setting, namely, $(x,y)\in \mathbb{T}\times [0,1]$. Moreover, Theorem \ref{thm:Results:ill-posedness for the Linear system for h} can be proved in the full space setting with few modifications provided we replace \( L^2_x\) with some weighted \(L^2\)-space such that \( e^{ikx}\) becomes integrable.
\end{remark}

\begin{remark}\label{rmk:Results:WKB Method}
    By taking the Fourier transform in $x\to k$ and Laplace transform in $t\to c$, we can reduce the ill-posedness problem of the system \eqref{eq:ill-posedness:System for h} to studying the eigenvalue problem for the complex harmonic oscillator in a finite interval with Dirichlet boundary condition. Dividing \(k\) from the eigenvalue problem and translating the problem by \(\frac{1}{2}\), namely $z=y-\frac{1}{2}$, we obtain a system of the form 
    \[  - \tfrac{1}{k} \phi''  + iz^2 \phi = \tfrac{\lambda}{k} \phi, \quad \phi(\pm \tfrac{1}{2}) = 0,  
    \] 
    where $k$ is the wave number, $z=y-\frac{1}{2}\in [-\frac{1}{2},\frac{1}{2}]$, and $\lambda=c^2+c-i\frac{k}{4}$. 
    As \( \frac{1}{k}\) is small, this problem is classically studied via the complex WKB method which was developed in the papers \cite{MR929202} and \cite{MR1382227} for a general potential \(V(z)\). The eigenvalue problem for the potential \(V(z) = iz^2 \) was undertaken by Shkalikov in \cite{shkalikov2002limit}. Here, we provide a construction using the classical result for the first-eigenvalue for the complex harmonic oscillator and use energy methods to complete our ill-posedness result. 
\end{remark}
\section{Local Well-posedness in Gevrey-2 Class}
In this section, we prove Theorem \ref{thm:Local well-posedness}. For the solution \((u^\varepsilon, v^\varepsilon, p, e^\varepsilon, f^\varepsilon, h^\varepsilon)\) of \eqref{eq:System for (u^e,b^e)}, we split it into \((u^L, v^L, e^L,f^L,h^L)\) and \( (u,v,p,e,f,h):= (u^\varepsilon - u^L , v^\varepsilon - v^L , p^\varepsilon, e^\varepsilon - e^L , f^\varepsilon - f^L , h^\varepsilon - h^L ) \) where the first part satisfies the linear system in \( (t,x,y) \in [0,T] \times  \mathbb{R} \times  [0,1] : \)
\begin{equation}\label{eq:Linearised 2D NSM system with initial data}
    \begin{cases} \partial_t u^L - \varepsilon ^2 \partial_{xx} u^L  - \partial_{yy} u^L  = 0  \\  
    \partial_x u^L + \partial_y v^L = \partial_x e^L + \partial_{y} f^L = 0  \\ \partial_t h^L  + \partial_x f^L - \partial_y e^L  = 0 \\
    \beta \partial_{tt} h^L + \partial_t h^L  - \gamma(\varepsilon ^2 \partial_{xx} h^L + \partial_{yy} h^L   )  = 0  \\ 
    (u^L , v^L , e^L , f^L , h^L)|_{t = 0} =  (u_{\mathrm{in}}, v_{\mathrm{in}}, e_{\mathrm{in}}, f_{\mathrm{in}}, h_{\mathrm{in}}) \\
    { (u^L , v^L)|_{y = 0,1} = (0,0)}
      \end{cases}  
\end{equation}  
and \((u,v,p,e,f,h)\) verifies the following quasilinear system with the forcing coming from the linear part 
\begin{equation}\label{eq:System for (u,h)}
    \begin{cases} 
        \partial_t u  - \varepsilon ^2 \partial_{xx} u  - \partial_{yy} u + \partial_x p + u^\varepsilon \partial_x u^\varepsilon + v^\varepsilon \partial_y u^\varepsilon \\
        \qquad\qquad\qquad\qquad\qquad\qquad\qquad\qquad = \alpha(f^\varepsilon + f^\varepsilon h^\varepsilon  - u^\varepsilon - u^\varepsilon (h^\varepsilon)^2 - 2 u^\varepsilon h^\varepsilon  )  \\
        \varepsilon ^2 ( \partial_t v + u^\varepsilon \partial_x v^\varepsilon  + v^\varepsilon \partial_y v^\varepsilon - \varepsilon ^2 \partial_{xx} v   - \partial_{yy} v     )  + \partial_y p\\
        \qquad\qquad\qquad\qquad\qquad\qquad\qquad\qquad = - \alpha (e^\varepsilon  + e^\varepsilon h^\varepsilon  + v^\varepsilon + v^\varepsilon (h^\varepsilon)^2 + 2 v^\varepsilon h^\varepsilon) \\
        \partial_x u + \partial_y v   = \partial_x e + \partial_y f  = 0 \\ 
        \partial_t h  +  \partial_x f - \partial_y e  = 0  \\ 
        \beta \partial_{tt} h + \partial_t h - \gamma(\varepsilon ^2 \partial_{xx}  h + \partial_{yy} h   )  + u^\varepsilon \partial_x h^\varepsilon + v^\varepsilon \partial_y h^\varepsilon = 0  \\ 
        (u,v,f,e,h)|_{t = 0} = (u,v,f,e,h)|_{y = 0, 1} = (0,0,0,0,0)
     \end{cases}  
\end{equation} 
Note here that the initial data has been incorporated in the linear system. Moreover, linear system is \textbf{not} the linearised system of \eqref{eq:System for (u^e,b^e)} as the \(- \alpha u^\varepsilon\) contain an additional linear term.

\subsection{Linear system}
We first treat the linear system. Note that for any \(\sigma' \geq 0: \)
\begin{equation}\label{eq:Differentiating Aₛ}
  \partial_t[ \mathrm{A}_{\sigma'} h] = \mathrm{A}_{\sigma'} (\partial_t h) -  \dot{\lambda} \langle  \partial_x \rangle^{\frac{1}{2}}  [ \mathrm{A}_{\sigma'} h]. 
\end{equation} 

Then as \(\partial_{yy} u^L \) vanishes on the boundary, applying the operator \( \mathrm{A}_{s+1}\) \eqref{eq:Definition of As}, for \(\lambda (0) = 0\) and \(\dot{\lambda}\) to be determined later, on \eqref{eq:Linearised 2D NSM system with initial data} using \eqref{eq:Differentiating Aₛ} we get, 
\begin{equation}\label{eq:LW:Equation for u^L}
    \partial_t \mathrm{A}_{s + 1} u^L + \dot{\lambda} \langle  \partial_x \rangle^{s + \frac{5}{2}} u^L  - \varepsilon ^2 \partial_{xx} \mathrm{A}_{s + 1} u^L - \partial_{yy} \mathrm{A}_{s + 1}u^L = 0. 
\end{equation} 
Then testing \eqref{eq:LW:Equation for u^L} with \(  \partial_{yy}   \mathrm{A}_{ s+ 1} u^L\), we get  
\begin{equation}\label{eq:Gev-2 energy estimate for u^L}
    \frac{1}{2} \frac{\mathrm{d}}{\mathrm{d}t} \lVert  \partial_y u^L \rVert_{\mathcal{G}_2^{ s + 1}}^2 + \dot{\lambda}(t) \lVert  \partial_y u^L \rVert_{\mathcal{G}_2^{ s + \frac{5}{4}}}^2 + \lVert  (\varepsilon \partial_x , \partial_y) \partial_y u^L \rVert_{\mathcal{G}_2^{ s + 1}} = 0, 
\end{equation} 
and testing \eqref{eq:LW:Equation for u^L} with \(\partial_t  \mathrm{A}_{ s + 1} h^L \), we get
\begin{equation}\label{eq:Gev-2 energy estimate for h^L}
    \frac{1}{2} \frac{\mathrm{d}}{\mathrm{d}t} \lVert  (\beta \partial_t, \gamma \varepsilon \partial_x,\gamma \partial_y) h^L \rVert_{\mathcal{G}^{s  + 1}_2}^2 + \dot{\lambda} (t) \lVert  \partial_t h^L  \rVert_{\mathcal{G}_{2}^{ s + \frac{5}{4}}}^2  + \lVert  \partial_t h^L \rVert_{\mathcal{G}_2^{ s + 1}}^2 = 0. 
\end{equation} 

We then have the following lemma. 
\begin{lemma}\label{lem:Smallness on u^L,h^L}
    The system \eqref{eq:Linearised 2D NSM system with initial data} has a global solution \((u^L , v^L , e^L , f^L , h^L)\). Moreover, for any \(s > 0 \) and non-decreasing function \( 0 \leq \lambda(t) \leq \delta _0\), it holds that 
    \[  \lVert  (1,\partial_y)v^L \rVert_{\mathcal{G}_2^{ s}} ^2 + \lVert  (1, \partial_y ) u^L \rVert_{\mathcal{G}_2^{ s + 1}}^2  + \lVert  \nabla (e^L , f^L) \rVert_{\mathcal{G}_{2}^{ s + 1}}^2  + \lVert  \partial_t h^L \rVert_{\mathcal{G}^{s + 1}_2}^2 \leq C C_{\mathrm{in}}
    \] 
    and 
    \[   \lVert  (1,\partial_y) h^L \rVert_{\mathcal{G}_2^{ s + 1}}^2 \leq C c_0 .
    \] 
\end{lemma}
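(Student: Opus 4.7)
The plan is to decouple the system \eqref{eq:Linearised 2D NSM system with initial data} into three independent sub-problems: (i) a heat equation for $u^L$ (with $v^L$ recovered from the divergence-free constraint as $v^L=-\int_0^y \partial_x u^L\,dy'$), (ii) a damped wave equation for $h^L$, and (iii) a Cauchy--Riemann type system
\[
    \partial_x e^L + \partial_y f^L = 0,\qquad \partial_y e^L - \partial_x f^L = \partial_t h^L,
\]
with boundary data $\partial_y e^L|_{y=0,1}=0$ and $f^L|_{y=0,1}=0$. Global existence and uniqueness of each component are then immediate from the standard parabolic and damped-wave semigroups.

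For $u^L$ and $v^L$, I would integrate the already-derived identity \eqref{eq:Gev-2 energy estimate for u^L} in time and discard the nonnegative Gevrey and dissipation terms to get
\[
    \|\partial_y u^L(t)\|_{\mathcal{G}^{s+1}_2}^2 \le \|\partial_y u_{\mathrm{in}}\|_{\mathcal{G}^{\delta_0;s+1}_2}^2 \lesssim C_{\mathrm{in}},
\]
where the last step uses the hypothesis \eqref{eq:Control on initial for Local well-posedness} together with the elementary trade $\langle\xi\rangle^{1/2}e^{-\kappa\langle\xi\rangle^{1/2}}\lesssim \kappa^{-1}$ to recover the missing half-derivative at the price of a slight shrinkage of the Gevrey weight. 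Poincar\'e in $y$ (applicable since $u^L|_{y=0,1}=0$) lifts this to a bound on $\|u^L\|_{\mathcal{G}^{s+1}_2}$, and the representation $v^L=-\int_0^y \partial_x u^L\,dy'$ transfers the estimate to $\|(1,\partial_y) v^L\|_{\mathcal{G}^s_2}$ with a single horizontal derivative absorbed into the index shift from $s+1$ to $s$.

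For $h^L$, integrating \eqref{eq:Gev-2 energy estimate for h^L} in time yields a pointwise bound on $\|(\beta\partial_t,\gamma\varepsilon\partial_x,\gamma\partial_y) h^L(t)\|_{\mathcal{G}^{s+1}_2}^2$ in terms of its value at $t=0$. To express $\partial_t h^L(0)$ in prescribed data, I invoke the Faraday law at $t=0$ to obtain $\partial_t h^L(0) = \partial_y e_{\mathrm{in}} - \partial_x f_{\mathrm{in}}$, and then bound $\partial_x f_{\mathrm{in}}$ via the divergence-free constraint $\partial_x e_{\mathrm{in}}+\partial_y f_{\mathrm{in}}=0$ together with $f_{\mathrm{in}}|_{y=0}=0$. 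The boundary condition $h^L|_{y=0,1}=0$ propagates from $t=0$ because $\partial_t h^L=\partial_y e^L-\partial_x f^L$ vanishes on $y=0,1$ by the conditions $f^L|_{y=0,1}=0=\partial_y e^L|_{y=0,1}$; a final Poincar\'e therefore gives $\|(1,\partial_y)h^L\|_{\mathcal{G}^{s+1}_2}^2 \lesssim c_0$, with the smallness inherited from the smallness of $h_{\mathrm{in}}$ and of $(e_{\mathrm{in}},f_{\mathrm{in}})$ encoded in $c_0$ by the paper's convention.

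Finally, differentiating the Cauchy--Riemann system produces two Poisson equations $\Delta e^L=\partial_y\partial_t h^L$ and $\Delta f^L=-\partial_x\partial_t h^L$ with compatible boundary conditions; testing against $\mathrm{A}_{s+1}(e^L,f^L)$ in $L^2$ and integrating by parts yields $\|\nabla(e^L,f^L)\|_{\mathcal{G}^{s+1}_2}\lesssim \|\partial_t h^L\|_{\mathcal{G}^{s+1}_2}\lesssim C C_{\mathrm{in}}^{1/2}$, which closes the lemma. The main technical hurdle is not any individual estimate but rather the bookkeeping needed to maintain consistency between the time-dependent Gevrey weight $\delta_0-\lambda(t)$ at Sobolev index $s+1$ in the conclusion and the fixed weight $\delta_0$ at the lower index $s+\tfrac12$ in the hypothesis; this is absorbed systematically by the weight-to-regularity trade described above.
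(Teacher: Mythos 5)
Your proposal is correct and follows essentially the same route as the paper: the monotone energy identities \eqref{eq:Gev-2 energy estimate for u^L}--\eqref{eq:Gev-2 energy estimate for h^L} integrated in time, Poincar\'e in $y$ and the divergence-free relation to recover $(u^L,v^L)$, the div-curl (elliptic) estimate of Proposition \ref{prop:Elliptic Estimates} for $\nabla(e^L,f^L)$, and Faraday's law at $t=0$ to express $\partial_t h^L(0)$ in terms of $\nabla(e_{\mathrm{in}},f_{\mathrm{in}})$. You are in fact slightly more careful than the paper on one point of bookkeeping, namely reconciling the hypothesis at index $s+\tfrac12$ with fixed weight $\delta_0$ against the conclusion at index $s+1$ via the trade $\langle\xi\rangle^{1/2}e^{-\kappa\langle\xi\rangle^{1/2}}\lesssim\kappa^{-1}$, which the paper passes over silently.
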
  
\begin{proof} 
    Using the divergence-free condition for \( (u^L, v^L)\) and Gagliardo-Nirenberg: 
    \[ \lVert  v^L \rVert_{\mathcal{G}^{s}_2} \lesssim  \lVert  \partial_y v^L \rVert_{\mathcal{G}^{s}_2}  = \lVert  \partial_x u^L \rVert_{\mathcal{G}^{s}_2}  \lesssim  \lVert  u^L \rVert_{\mathcal{G}^{s + 1}_2}.  
    \] 
    As \(\lambda \) is an increasing function so \(\dot{\lambda} \geq 0 \) and so \eqref{eq:Gev-2 energy estimate for u^L} implies  
    \[  \frac{\mathrm{d}}{\mathrm{d}t} \lVert  \partial_y u^L \rVert_{\mathcal{G}^{s + 1}_2}^2 \leq 0 \implies \lVert  \partial_y u^L \rVert_{\mathcal{G}^{s + 1}_2} \leq \lVert  \partial_y u_{\mathrm{in}} \rVert_{\mathcal{G}^{s + 1}_2} 
    \] 
    and thus 
    \[  \lVert (1, \partial_y)  v^L \rVert_{\mathcal{G}^{s}_2}   + \lVert  (1, \partial_y ) u^L \rVert_{\mathcal{G}^{s}_2} ^2  \leq \lVert  \partial_y u_{\mathrm{in}} \rVert_{\mathcal{G}^{s + 1}_2}^2 \leq C C_{\mathrm{in}}. 
    \] 
    Similarly, we have \(\partial_t h ^L= \partial_y e^L - \partial_x f^L\) and \(\partial_x e^L + \partial_y f^L = 0\) so Proposition \ref{prop:Elliptic Estimates} and \eqref{eq:Gev-2 energy estimate for h^L} implies 
    \[  \lVert  \nabla (e^L , f^L)  \rVert_{\mathcal{G}^{s}_2}^2  \lesssim  \lVert \partial_t h^L \rVert_{\mathcal{G}^{s + 1}_2}^2 \lesssim  \lVert \partial_t h_{\mathrm{in}} \rVert_{\mathcal{G}^{s+1}_2}^2  , \quad \lVert  (1, \partial_y h^L) \rVert_{\mathcal{G}^{s + 1}_2} \lesssim  \lVert  \partial_y h_{\mathrm{in}} \rVert_{\mathcal{G}^{s + 1}_2}^2 \lesssim  c_0
    \] 
    To conclude, we recall \( \partial_t h_{\mathrm{in}} = \partial_y e_{\mathrm{in}} - \partial_x f_{\mathrm{in}}\) so 
    \[  \lVert  \partial_t h_{\mathrm{in}} \rVert_{\mathcal{G}^{s+ 1}_2}^2 \leq   \lVert  \nabla (e_{\mathrm{in}}, f_{\mathrm{in}}) \rVert_{\mathcal{G}^{s+1}_2}^2\lesssim  C_{\mathrm{in}}. \qedhere
    \]   
\end{proof}    
 
 \subsection{Uniform estimate for the nonlinear system.}
 
Recall that \( (u,v,p,e,f,h)\) solves the following system \eqref{eq:System for (u,h)} in \( [0,T] \times \mathbb{R} \times  [0,1] : \) 
\[      \begin{cases} \partial_t u -\varepsilon ^2 \partial_{xx} u - \partial_{yy} u + \partial_x p + u^\varepsilon  \partial_x u^\varepsilon  + v^\varepsilon  \partial_y u^\varepsilon  \\
    \qquad\qquad\qquad\qquad\qquad\qquad\qquad\qquad  = \alpha\left( f^\varepsilon  + f^\varepsilon h^\varepsilon  - u^\varepsilon  - u^\varepsilon  (h^\varepsilon )^2  - 2 u^\varepsilon  h^\varepsilon \right) ,\\ 
\varepsilon ^2 \left( \partial_t v  + u^\varepsilon  \partial_ x v^\varepsilon  + v^\varepsilon  \partial_y v^\varepsilon  - \varepsilon ^2 \partial_{xx}v^\varepsilon  - \partial_{yy} v \right) + \partial_y p \\
\qquad\qquad\qquad\qquad\qquad\qquad\qquad\qquad= -\alpha \left(e^\varepsilon  + e^\varepsilon  h^\varepsilon  + v^\varepsilon  + v^\varepsilon  (h^\varepsilon )^2 + 2 v^\varepsilon  h^\varepsilon \right), \\ 
\partial_x u + \partial_y v = \partial_x e + \partial_y f = 0, \\
\partial_t h + \partial_x f  - \partial_y e = 0, \\ 
\beta \partial_{tt} h + \partial_t h - \gamma( \varepsilon ^2 \partial_{xx} h + \partial_{yy} h) + u^\varepsilon   \partial_x h^\varepsilon  + v^\varepsilon   \partial_y h^\varepsilon  = 0,  \\
(u , v , e , f , h )|_{t = 0}   = (u,v,\partial_y e, f, h)|_{y = 0,1} =  (0,0,0,0,0), \end{cases}  
\] 
where 
\[  (u  ,v   , e  , f  , h   , p  ) := ( u^\varepsilon -  u^L, v  ^\varepsilon -  v^L , e ^\varepsilon -  e^L , f ^\varepsilon -  f^L , h ^\varepsilon -  h^L , p  ).  
\] 

For \(s \geq 10 \) and \(\sigma = s - \frac{3}{4}\), we introduce the energy functionals  
\begin{align*}
    \mathcal{E}_u& := \lVert  (u,\varepsilon    v)\rVert_{\mathcal{G}_2^{ s}}^2 , \\
    \mathcal{D}_u & :=  \lVert  ( \sqrt{ 2} \varepsilon \partial_x u , \varepsilon ^2 \partial_x v , \partial_y u) \rVert_{\mathcal{G}^{s}_2}^2+   \alpha \lVert  (u , v )\rVert_{\mathcal{G}^{s}_{2}}^2  ,\\
     \mathcal{CK}_u &:= \dot{\lambda}  \lVert(  u , \varepsilon    v) \rVert_{\mathcal{G}^{s  + \frac{1}{4}}_{2}}^2 , \\
    \shortintertext{and} 
    \mathcal{E}_h &: = \frac{\beta}{2} \lVert  \partial_t h  \rVert_{\mathcal{G}^{\sigma}_{2}}^2   + \frac{\gamma}{2} \lVert  (\varepsilon \partial_x, \partial_y) h  \rVert_{\mathcal{G}^{\sigma}_{2}}^2   + \frac{\beta}{2} \dot{\lambda}^2  \lVert  h \rVert_{\mathcal{G}^{\sigma + \frac{1}{2}}_{2}}^2 ,  \\ \
    \mathcal{D}_h &: = \lVert  \partial_t h \rVert_{\mathcal{G}^{\sigma}_{2}}^2, \\ 
    \mathcal{CK}_h &:= \frac{\beta}{2} \dot{\lambda}  \lVert  \partial_t h \rVert_{\mathcal{G}^{\sigma + \frac{1}{4}}_{2}}^2 + \gamma \dot{\lambda} \rVert(\varepsilon \partial_x, \partial_y )h \rVert_{\mathcal{G}^{\sigma + \frac{1}{4}}_{2}}^2   + \frac{\beta}{2} \left(\dot{\lambda}  \lVert  \partial_t [ \mathrm{A}_{ \sigma  + \frac{1}{4}}h] \rVert_{L^2}^2  + \dot{\lambda}^3  \lVert  h \rVert_{\mathcal{G}^{s}_{2}}^2\right).  
\end{align*}  
We also introduce the low regularity energy:  
\begin{align*}
    \mathrm{E}_u & := \lVert  \partial_y u  , \sqrt{2} \varepsilon  \partial_x u  ,\varepsilon ^2  \partial_x v  \rVert_{\mathcal{G}^{ s - 2}_{2}}^2, \\ 
    \mathrm{D}_u &:= \lVert  \partial_t (u, \varepsilon   v) \rVert_{\mathcal{G}^{ s - 2}_{2}}^2, \\ 
    \mathrm{CK}_u &:=  \dot{\lambda}(t) \lVert ( \partial_y u  , \sqrt{2} \varepsilon  \partial_x u  ,\varepsilon ^2  \partial_x v ) \rVert_{\mathcal{G}^{ \sigma - 1}_{2}}^2,
\end{align*}  
With this setup, we will prove the theorem on local-well-posedness: 

\begin{proof}[Proof of Theorem \ref{thm:Local well-posedness}]
We first prove a uniform estimates of \((u, v, h, e, f)\). The existence then follows directly using a standard compactness argument. To prove the uniform estimate we aim to make a bootstrap argument. To that end, we first establish some a-priori estimates. 
    

\subsubsection{Control on \texorpdfstring{\(\mathcal{E}_u.\)}{Eu.}}   
    Applying \(\mathrm{A}_s u\) on equation of \(u\) and \( \mathrm{A}_sv \) on the equation for \(v\) using \eqref{eq:Differentiating Aₛ} and summing them up, we get
    \begin{align*}
        &\partial_t \mathrm{A}_{s} u + \dot{\lambda} \langle  \partial_x \rangle^{\frac{1}{2}} \mathrm{A}_{s} u + \mathrm{A}_{s} ( u^\varepsilon  \partial_x u^\varepsilon  ) + \mathrm{A}_{s}(v^\varepsilon  \partial_y u^\varepsilon ) - \varepsilon ^2 \partial_{xx} \mathrm{A}_{s} u - \partial_{yy} \mathrm{A}_{s} u + \partial_x \mathrm{A}_{s} p   \\
        &\qquad\qquad\qquad\qquad= \alpha \mathrm{A}_{s} ( f^\varepsilon  + f^\varepsilon  h^\varepsilon  - u^\varepsilon  - u^\varepsilon (h^\varepsilon )^2  - 2 u^\varepsilon  h^\varepsilon   ), \\
        & \varepsilon ^2 ( \partial_t \mathrm{A}_{s} v + \dot{\lambda} \langle  \partial_x \rangle^{\frac{1}{2}} \mathrm{A}_{s} v + \mathrm{A}_{s}(v^\varepsilon  \partial_x v^\varepsilon ) + \mathrm{A}_{s} (v^\varepsilon  \partial_y v^\varepsilon )  - \varepsilon ^2 \partial_{xx} \mathrm{A}_{s} v - \partial_{yy} \mathrm{A}_{s} v + \partial_y \mathrm{A}_{s} p)\\
        &\qquad\qquad\qquad\qquad= - \alpha \mathrm{A}_{s} ( e^\varepsilon   + e^\varepsilon  h^\varepsilon  + v^\varepsilon   + v^\varepsilon  (h^\varepsilon )^2  + 2 v^\varepsilon  h^\varepsilon ). 
    \end{align*} 
    By taking the \(L^2\)-inner product of the above equations with \( (\mathrm{A}_{s}u , \mathrm{A}_{s} v)\) and using the integration by parts, we have (where the pressure disappears due to divergence-free condition of \((u,v)\))  
    \begin{align*}
        \frac{1}{2}\frac{\mathrm{d}}{\mathrm{d}t} \mathcal{E}_{u, v} + \mathcal{CK}_{u, v}  + \mathcal{D}_{u, v}   
         &=  - \langle  \mathrm{A}_{s}(u^\varepsilon  \partial_x u  ) , \mathrm{A}_{s} u  \rangle - \langle  \mathrm{A}_{s} (u^\varepsilon  \partial_x u^L) , \mathrm{A}_{s} u \rangle \\
         &\phantom{=} - \langle  \mathrm{A}_{s} (v^\varepsilon  \partial_y u ) , \mathrm{A}_{s}u \rangle  - \langle  \mathrm{A}_{s} ( v^\varepsilon  \partial_y u^L) , \mathrm{A}_{s} u \rangle \\
         & \phantom{=}- \varepsilon ^2 \langle  \mathrm{A}_{s} ( u^\varepsilon  \partial_x v), \mathrm{A}_{s} v \rangle - \varepsilon ^2 \langle  \mathrm{A}_{s} ( u^\varepsilon  \partial_x v^L) , \mathrm{A}_{s} v \rangle \\
         &\phantom{=} - \varepsilon ^2 \langle  \mathrm{A}_{s} ( v^\varepsilon  \partial_y v) , \mathrm{A}_{s}v \rangle  - \varepsilon ^2 \langle  \mathrm{A}_{s} ( v^\varepsilon  \partial_y v^L), \mathrm{A}_{s} v\rangle  \\
         & \phantom{=}+ \alpha \langle  \mathrm{A}_{s} (f^\varepsilon  + f^\varepsilon   h^\varepsilon   - u^L   - u^\varepsilon  (h^\varepsilon )^2  - 2 u^\varepsilon  h^\varepsilon ) , \mathrm{A}_{s} u \rangle \\
         & \phantom{=}- \alpha \langle  \mathrm{A}_{s} ( e^\varepsilon  + e^\varepsilon h^\varepsilon  + v^L + v^\varepsilon  (h^\varepsilon )^2 + 2 v^\varepsilon  h^\varepsilon  ) , \mathrm{A}_{s}v \rangle \\
        &=: \mathrm{I}_1 + \mathrm{I}_1^L + \mathrm{I}_2  + \mathrm{I}_2^L + \mathrm{I}_3 + \mathrm{I}_3^L + \mathrm{I}_4 + \mathrm{I}_4^L + \mathrm{I}_5 + \mathrm{I}_6
    \end{align*} 
Then from Proposition \ref{prop:Estimate on〈Aₛ(g∇ϕ),Aₛϕ〉}, we have 
\begin{align*}
      \lvert \mathrm{I}_1 \rvert &\lesssim  \lVert  \partial_y u^\varepsilon  \rVert_{\mathcal{G}^{2}_2} \lVert  u \rVert_{\mathcal{G}^{s+ \frac{1}{4}}_2}^2 + \lVert  u^\varepsilon  \rVert_{\mathcal{G}^{s}_2} \lVert  \partial_y u \rVert_{\mathcal{G}^{2}_2} \lVert  u \rVert_{\mathcal{G}^{s}_2}  \\
     &\lesssim ( \lVert  \partial_y u \rVert_{\mathcal{G}^{2}_2} + C_{\mathrm{in}} ^{\frac{1}{2}})\lVert  u \rVert_{\mathcal{G}^{s + \frac{1}{4}}_2}^2 +   ( \lVert  u \rVert_{\mathcal{G}^{s}_2} + C_{\mathrm{in}}^{\frac{1}{2}}) \lVert  \partial_y u \rVert_{\mathcal{G}^{2}_2} \lVert  u \rVert_{\mathcal{G}^{s}_2} . 
\end{align*} 

Letting \( \dot{\lambda} \geq 32 C C_{\mathrm{in}}^{\frac{1}{2}}\), we obtain 
\begin{align*}
     \lvert \mathrm{I}_1 \rvert  &\leq  C\left( \sqrt{\mathrm{E}_u} \lVert  u \rVert_{\mathcal{G}^{s + \frac{1}{4}}_2}^2 + C_{\mathrm{in}}^{\frac{1}{2}} \lVert  u \rVert_{\mathcal{G}^{s + \frac{1}{4}}_2}^2  + \sqrt{\mathrm{E}_u}\mathcal{E}_u+ C^{\frac{1}{2}}_{\mathrm{in}} \sqrt{ \mathrm{E}_u \mathcal{E}_u}\right) \\
     &\leq \frac{\dot{\lambda}}{32} \lVert u \rVert_{\mathcal{G}^{s + \frac{1}{4}}_2}^2  +C \dot{\lambda} \lVert u \rVert_{\mathcal{G}^{s + \frac{1}{4}}_2}^2 \sqrt{\mathcal{E}_u} +C \sqrt{\mathrm{E}_u}\mathcal{E}_u +C C_{\mathrm{in}}^{\frac{1}{2}} \sqrt{\mathrm{E}_u \mathcal{E}_u} \\
     &\leq \frac{1}{32} \mathcal{CK}_u  + C\mathcal{CK}_u \sqrt{\mathcal{E}_u}  + C\sqrt{\mathrm{E}_u}\mathcal{E}_u +C C^{\frac{1}{2}}_{\mathrm{in}} \sqrt{ \mathrm{E}_u \mathcal{E}_u}. 
\end{align*} 

Repeating the same estimate for \(\mathrm{I}_3\) and summing up, we obtain 
\begin{equation}\label{eq:Control on I1 and I3}
    \lvert \mathrm{I}_1 \rvert +  \lvert \mathrm{I}_3 \rvert - \frac{1}{16}\mathcal{CK} ^u  \lesssim    \mathcal{CK} \sqrt{\mathcal{E}_u}  + \sqrt{ \mathrm{E}_u  }\mathcal{E}_u + C^{\frac{1}{2}}_{\mathrm{in}} \sqrt{ \mathrm{E}_u \mathcal{E}_u}. 
\end{equation} 

Using Proposition \ref{prop:Gevrey-Holder}, we also have 
\begin{align*}
     \lvert \mathrm{I}_1^L \rvert + \lvert \mathrm{I}_3^L \rvert &\lesssim  \left( \lVert  u^\varepsilon  \rVert_{\mathcal{G}^{2}_\infty} \lVert  \partial_x u^L  \rVert_{\mathcal{G}^{s}_2} + \lVert  u^\varepsilon  \rVert_{\mathcal{G}^{s}_2} \lVert  \partial_x u^L \rVert_{\mathcal{G}^{2}_\infty}\right) \lVert  u \rVert_{\mathcal{G}^{s}_2} \\
     &\qquad\qquad  +  ( \lVert  u^\varepsilon  \rVert_{\mathcal{G}^{2}_\infty} \lVert  \partial_x v^L   \rVert_{\mathcal{G}^{s}_2} + \lVert  u^\varepsilon  \rVert_{\mathcal{G}^{s}_2} \lVert  \partial_x v^L \rVert_{\mathcal{G}^{2}_\infty}) \lVert  \varepsilon v \rVert_{\mathcal{G}^{s}_2}  \\
     &\lesssim  \lVert  (u , \varepsilon v) \rVert_{\mathcal{G}^{s}_2} \sqrt{C_{\mathrm{in}}} \left( \lVert  u^\varepsilon  \rVert_{\mathcal{G}^{2}_\infty} + \lVert  u^\varepsilon  \rVert_{\mathcal{G}^{s}_2}\right) \\ 
     &\lesssim  \lVert  (u , \varepsilon v ) \rVert_{\mathcal{G}^{s}_2} \sqrt{C_{\mathrm{in}}} \left( ( \lVert  \partial_y u \rVert_{\mathcal{G}^{2}_2} + \sqrt{C_{\mathrm{in}}}) + ( \lVert  u \rVert_{\mathcal{G}^{s}_2} + \sqrt{C_{\mathrm{in}}})\right) \\
     &\lesssim  \sqrt{C_{\mathrm{in}}} ( \mathcal{E}_u + \mathrm{E}_u + C_{\mathrm{in}})
\end{align*} 
Similarly, using Proposition \ref{prop:Estimate on〈Aₛ(g∇ϕ),Aₛϕ〉}, we get 
\begin{align*}
     \lvert \mathrm{I}_2 \rvert &\lesssim  \lVert  \partial_y v^\varepsilon  \rVert_{\mathcal{G}^{2}_2} \lVert  \partial_y u \rVert_{\mathcal{G}^{s}_2} \lVert u  \rVert_{\mathcal{G}^{s}_2} + \lVert  v^\varepsilon  \rVert_{\mathcal{G}^{s}_2} \lVert  \partial_y u \rVert_{\mathcal{G}^{2}_2} \lVert  \partial_y u  \rVert_{\mathcal{G}^{s}_2}  + \lVert  \partial_{yy} v^\varepsilon  \rVert_{\mathcal{G}^{2}_2} \lVert  u \rVert_{\mathcal{G}^{s}_2}^2 \\
     &\lesssim  \lVert  u^\varepsilon  \rVert_{\mathcal{G}^{3}_2} \lVert \partial_y u\rVert_{\mathcal{G}^{s}_2} \lVert  u \rVert_{\mathcal{G}^{s}_2} + \lVert  v^\varepsilon  \rVert_{\mathcal{G}^{s}_2} \lVert  \partial_y u \rVert_{\mathcal{G}^{2}_2} \lVert  \partial_y u \rVert_{\mathcal{G}^{s}_2} + \lVert  \partial_y u^\varepsilon  \rVert_{\mathcal{G}^{3}_2} \lVert  u \rVert_{\mathcal{G}^{s}_2}^2 \\
     &\lesssim  (\sqrt{\mathrm{E}_u} + \sqrt{C_{\mathrm{in}}} )\sqrt{\mathcal{D}_u \mathcal{E}_u  }  + (\sqrt{\mathcal{D}_u} + \sqrt{C_{\mathrm{in}}}) \sqrt{\mathrm{E}_u \mathcal{E}_u  }  + \sqrt{\mathrm{E}_u} \mathcal{E}_u \\
     &\lesssim \sqrt{C_{\mathrm{in}}}  (\sqrt{\mathcal{D}_u \mathcal{E}_u  } +  \sqrt{\mathrm{E}_u \mathcal{E}_u  } ) + \sqrt{\mathrm{E}_u \mathcal{D}_u \mathcal{E}_u}  + \sqrt{\mathrm{E}_u} \mathcal{E}_u  , 
\end{align*}

and using Proposition \ref{prop:Gevrey-Holder}, we get 
\begin{align*}
    \lvert \mathrm{I}_2^L \rvert&\lesssim  \lVert  u \rVert_{\mathcal{G}^{s}_2} \lVert  v^\varepsilon \rVert_{\mathcal{G}^{2}_\infty} \lVert  \partial_y u^L \rVert_{\mathcal{G}^{s}_2} + \lVert  u \rVert_{\mathcal{G}^{s}_\infty} \lVert  v^\varepsilon \rVert_{\mathcal{G}^{s}_2} \lVert  \partial_y u^L \rVert_{\mathcal{G}^{2}_2} \\
    &\lesssim  \lVert  u \rVert_{\mathcal{G}^{s}_2} \lVert  u^\varepsilon \rVert_{\mathcal{G}^{3}_2} \lVert  \partial_y u^L \rVert_{\mathcal{G}^{s}_2} + \lVert  u \rVert_{\mathcal{G}^{s}_2}^{\frac{1}{2}}\lVert  \partial_{y} u \rVert_{\mathcal{G}^{s}_2}^{\frac{1}{2}} \lVert  v^\varepsilon \rVert_{\mathcal{G}^{s}_2}  \lVert  \partial_y u^L \rVert_{\mathcal{G}^{2}_2} \\
    &\lesssim  \sqrt{C_{\mathrm{in}} \mathcal{E}_u } ( \sqrt{\mathrm{E}_u} + \sqrt{C_{\mathrm{in}}} )  + \sqrt{C_{\mathrm{in}}} \mathcal{E}_u^{\frac{1}{4}} \mathcal{D}_u^{\frac{1}{4}}(\mathcal{D}_u^{\frac{1}{2}} + \sqrt{C_{\mathrm{in}}}) 
\end{align*} 

Thus for \(\mathrm{I}_2\), we have  
\begin{equation}\label{eq:Control on I2}
     \lvert \mathrm{I}_2 \rvert +  \lvert \mathrm{I}_2^L \rvert - \frac{1}{32} \mathcal{D}_u \lesssim   \sqrt{C_{\mathrm{in}}} (\mathcal{E}_u + \mathrm{E}_u + C_{\mathrm{in}}) ^2    . 
\end{equation} 
Using Proposition \ref{prop:Estimate on〈Aₛ(g∇ϕ),Aₛϕ〉}, we get 
\begin{align*}
     \lvert \mathrm{I}_4 \rvert &\lesssim \lVert  \partial_y v^\varepsilon  \rVert_{\mathcal{G}^{2}_2} \lVert  \varepsilon \partial_y v \rVert_{\mathcal{G}^{s}_2} \lVert \varepsilon v  \rVert_{\mathcal{G}^{s}_2} + \lVert  v^\varepsilon  \rVert_{\mathcal{G}^{s}_2} \lVert  \varepsilon\partial_y v \rVert_{\mathcal{G}^{2}_2} \lVert  \varepsilon\partial_y v  \rVert_{\mathcal{G}^{s}_2}  + \lVert  \partial_{yy} v^\varepsilon  \rVert_{\mathcal{G}^{2}_2} \lVert \varepsilon v \rVert_{\mathcal{G}^{s}_2}^2  \\ 
     &\lesssim  \lVert  u^\varepsilon  \rVert_{\mathcal{G}^{3}_2} \lVert  \varepsilon \partial_x u \rVert_{\mathcal{G}^{s}_2} \lVert  \varepsilon v \rVert_{\mathcal{G}^{s}_2} + \lVert  v^\varepsilon  \rVert_{\mathcal{G}^{s}_2} \lVert  \varepsilon \partial_x u \rVert_{\mathcal{G}^{2}_2} \lVert  \varepsilon \partial_x u \rVert_{\mathcal{G}^{s}_2} + \lVert  \partial_y u^\varepsilon  \rVert_{\mathcal{G}^{3}_2} \lVert  \varepsilon v \rVert_{\mathcal{G}^{s}_2}^2 \\
     &\lesssim  (\sqrt{\mathrm{E}_u}  + \sqrt{C_{\mathrm{in}}})\sqrt{\mathcal{D}_u \mathcal{E}_u}  + ( \sqrt{\mathcal{D}_u} + \sqrt{C_{\mathrm{in}}}) \sqrt{\mathrm{E}_u \mathcal{D}_u} + (\sqrt{\mathrm{E}_u} + \sqrt{C_{\mathrm{in}}}) \mathcal{E}_u\\
     &\lesssim  C^{\frac{1}{2}}_{\mathrm{in}} \left( \sqrt{\mathcal{D}_u \mathcal{E}_u} + \sqrt{\mathrm{E}_u \mathcal{D}_u} + \mathcal{E}_u\right)  + \sqrt{\mathrm{E}_u \mathcal{D}_u \mathcal{E}_u} +\mathcal{D}_u \sqrt{  \mathrm{E}_u } +  \sqrt{\mathrm{E}_u} \mathcal{E}_u
      \\ \lvert \mathrm{I}_4^L \rvert & \lesssim  \lVert  \varepsilon v \rVert_{\mathcal{G}^{s}_2} \left( \lVert  \varepsilon v^\varepsilon  \rVert_{\mathcal{G}^{s}_2} \lVert  \partial_y v^L \rVert_{\mathcal{G}^{2}_\infty} + \lVert  \varepsilon v^\varepsilon  \rVert_{\mathcal{G}^{2}_\infty} \lVert  \partial_y v^L \rVert_{\mathcal{G}^{s}_2} \right) \\
      & \lesssim  C^{\frac{1}{2}}_{\mathrm{in}} \lVert \varepsilon v \rVert_{\mathcal{G}^{s}_2}( \lVert  \varepsilon v^\varepsilon  \rVert_{\mathcal{G}^{s}_2} + \lVert  \varepsilon u^\varepsilon  \rVert_{\mathcal{G}^{3}_2})\\
      & \lesssim  \sqrt{C_{\mathrm{in}} \mathcal{E}_u}(\sqrt{\mathcal{E}_u} + \sqrt{C_{\mathrm{in}}} )\lesssim  \sqrt{C_{\mathrm{in}}}\mathcal{E}_u + C_{\mathrm{in}} \sqrt{\mathcal{E}_u}. 
\end{align*} 

Thus for \(\mathrm{I}_4\), we have 
\begin{equation}\label{eq:Control on I4}
     \lvert \mathrm{I}_4 \rvert +  \lvert \mathrm{I}_4^L \rvert - \frac{1}{32} \mathcal{D}_u \lesssim  \sqrt{C_{\mathrm{in}}}( \mathcal{E}_u + \mathrm{E}_u + C_{\mathrm{in}})^2 + \mathcal{D}_u \sqrt{\mathrm{E}_u}. 
\end{equation}  

\noindent For the forcing, a direct calculation gives us 
\begin{align*}
    \mathrm{I}_5 &=  \alpha \langle  \mathrm{A}_{s} f^\varepsilon  , \mathrm{A}_{s} u \rangle + \alpha \langle  \mathrm{A}_{s} ( f^\varepsilon  h^\varepsilon ) , \mathrm{A}_{s} u \rangle   - \alpha \langle  \mathrm{A}_{s} u^L , \mathrm{A}_{s} u \rangle   \\
    &\quad- \alpha \langle  \mathrm{A}_{s} (u^\varepsilon  (h^\varepsilon )^2) , \mathrm{A}_{s} u \rangle - 2 \alpha \langle  \mathrm{A}_{s} (u^\varepsilon  h^\varepsilon ) , \mathrm{A}_{s} u  \rangle \\
    &= :\mathrm{I}_{5,1} + \mathrm{I}_{5,2} + \mathrm{I}_{5, L} + \mathrm{I}_{5,3} + \mathrm{I}_{5,4} ,  \\ 
    \mathrm{I}_6 &= - \alpha \langle  \mathrm{A}_{s} e^{\varepsilon } , \mathrm{A}_{s} v \rangle - \alpha \langle  \mathrm{A}_{s} (e^\varepsilon  h^\varepsilon  ) , \mathrm{A}_{s} v\rangle - \alpha \langle  \mathrm{A}_{s} v^L , \mathrm{A}_{s} v \rangle  \\
    &\qquad - \alpha \langle  \mathrm{A}_{s} ( v^\varepsilon  ( h^\varepsilon  )^2) , \mathrm{A}_{s} v  \rangle - 2\alpha \langle \mathrm{A}_{s}( v^\varepsilon  h^\varepsilon  ) , \mathrm{A}_{s} v  \rangle  \\
    &= : \mathrm{I}_{6,1} + \mathrm{I}_{6,2} + \mathrm{I}_{6,L} + \mathrm{I}_{6,3} + \mathrm{I}_{6,4}. 
\end{align*} 
Now let \( \phi(x,y) = \int_{0}^y  u(x,y')\mathrm{\,d} y'     \) be the potential function. Then \(\partial_y \phi  = u\) and \( \partial_x \phi  = -v\) so using the divergence-free condition of \((e,f)\), we have 
\[  \mathrm{I}_{5,1} + \mathrm{I}_{6,1}  =  \alpha \langle \mathrm{A}_{s} f^\varepsilon  ,  - \partial _y \mathrm{A}_{s} \phi   \rangle  - \alpha \langle  \mathrm{A}_{s} e^\varepsilon  , \partial_x \mathrm{A}_{s} \phi  \rangle  = \alpha \langle  \mathrm{A}_{s} ( \partial_ y f^\varepsilon   + \partial_x e^\varepsilon  ) , \mathrm{A}_{s} \phi   \rangle  = 0. 
\] 
For the remaining terms, the strategy is almost similar so we will elaborate the calculation for one of the terms: Let us look at \( \mathrm{I}_{5,2}:\) Using Proposition \ref{prop:Gevrey-Holder}, we can write 
\[   \lvert \mathrm{I}_{5,2} \rvert \lesssim \lVert u \rVert_{\mathcal{G}^{s}_2} \left(\lVert  f^\varepsilon \rVert_{\mathcal{G}^{s}_2} \lVert  h^\varepsilon \rVert_{\mathcal{G}^{2}_\infty} + \lVert  f^\varepsilon \rVert_{\mathcal{G}^{2}_\infty} \lVert  h^\varepsilon \rVert_{\mathcal{G}^{s}_2}\right),
\] 
Now recall that \( h^\varepsilon  = h + h^L\) where \(\lVert h^L \rVert \lesssim c_0 \) in light of Lemma \ref{lem:Smallness on u^L,h^L}. Similarly, we can also control the \(f^\varepsilon\) term using Proposition \ref{prop:Elliptic Estimates} where we recall \( \partial_t h^\varepsilon = \partial_y e^\varepsilon - \partial_x f^\varepsilon\). As such, using Poincare inequality in \(y\), we get
\[    \lvert \mathrm{I}_{5,2} \rvert \lesssim \lVert  u \rVert_{\mathcal{G}^{s}_2} \left(( \lVert  f \rVert_{\mathcal{G}^{s}_2} +\sqrt{C_{\mathrm{in}}})(\lVert  \partial_y h \rVert_{\mathcal{G}^{2}_2} + \sqrt{c_0})  + (\lVert  \partial_ y f \rVert_{\mathcal{G}^{2}_2} + \sqrt{C_{\mathrm{in}}})(\lVert  h \rVert_{\mathcal{G}^{s}_2} +\sqrt{c_0})  \right) . 
\] 
Since these estimates end up involving the \(\mathcal{G}^{s}_2\) norm of \(h,\) we need this term in our controls which ends up corresponding to \(\sigma + \frac{3}{4} = s\) as per Lemma \ref{lem:|∂ₜh|ₛ expansion}. This explains the relation between \(\sigma\) and \(s\). As such, we can control \(\lVert h \rVert_{\mathcal{G}^{s}_2} \lesssim \sqrt{\frac{\mathcal{CK}_h}{\dot{\lambda}}} \) where \(\dot{\lambda}\) can absorb the large constant \(C\). Thus we have 
\begin{align*}
    \lvert \mathrm{I}_{5,2} \rvert &\lesssim   \sqrt{\mathcal{D}_u} \left( (\sqrt{\mathcal{E}_h} + \sqrt{C_{\mathrm{in}}})(\sqrt{\mathcal{E}_h} + \sqrt{c_0}) + (\sqrt{\mathcal{E}_h} + \sqrt{C_{\mathrm{in}}})(\frac{1}{\sqrt{\dot{\lambda}}}\sqrt{\mathcal{CK}_h} + \sqrt{c_0}) \right)\\
    &\lesssim  C_{\mathrm{in}} \sqrt{\mathcal{D}_u} + \sqrt{\mathcal{D}_u} \mathcal{E}_h + \sqrt{\mathcal{D}_u \mathcal{E}_h \mathcal{CK}_h} + \frac{1}{32 C} \sqrt{\mathcal{D}_u \mathcal{CK}_h} . 
\end{align*} 
Here we controlled \(\lVert u \rVert_{\mathcal{G}^{s}_2}\) by the damping instead of the energy term. This is mainly to have similarity with the estimate of \(\mathrm{I}_{6,2}\) as it contains \(\lVert v \rVert_{\mathcal{G}^{s}_2}\) which can only be controlled by damping: 
\begin{align*}
    \lvert \mathrm{I}_{6,2} \rvert  &\lesssim  \alpha \lVert  v \rVert_{\mathcal{G}^{s}_2} \left( ( \lVert  e \rVert_{\mathcal{G}^{s}_2} + C_{\mathrm{in}})(\lVert  \partial_y h \rVert_{\mathcal{G}^{2}_2} + C^h_{\mathrm{in}}) + (\lVert  \partial_y e \rVert_{\mathcal{G}^{2}_2} + C_{\mathrm{in}}) ( \lVert  h \rVert_{\mathcal{G}^{s}_2} + C^h_{\mathrm{in}})  \right) \\
    &\lesssim   C_{\mathrm{in}} \sqrt{\mathcal{D}_u} + \sqrt{\mathcal{D}_u} \mathcal{E}_h + \sqrt{\mathcal{D}_u \mathcal{E}_h \mathcal{CK}_h} + \frac{1}{32 C} \sqrt{\mathcal{D}_u \mathcal{CK}_h}
\end{align*} 
For the terms \( \mathrm{I}_{5,L}\) and \(\mathrm{I}_{6,L}\) we have a simple estimate using Cauchy-Schwarz inequality: 
\[  \lvert \mathrm{I}_{5,L} \rvert  +  \lvert \mathrm{I}_{6,L} \rvert \lesssim  \lVert  u \rVert_{\mathcal{G}^{s}_2} \lVert  u^L \rVert_{\mathcal{G}^{s}_2} + \lVert  v \rVert_{\mathcal{G}^{s}_2} \lVert v^L \rVert_{\mathcal{G}^{s}_2} \lesssim  \sqrt{C_{\mathrm{in}}} \lVert  (u,v) \rVert_{\mathcal{G}^{s}_2} \lesssim  \sqrt{C_{\mathrm{in}} \mathcal{D}_u}. 
\] 
The controls on \(\mathrm{I}_{5,j}\) and \(\mathrm{I}_{6,j}\) for \(j \geq 2\) follows similarly to \(\mathrm{I}_{5,2} :\) the main difference is that in \(\mathrm{I}_{5,3}\) and \(\mathrm{I}_{6,3}\) we have \((h^\varepsilon)^2\) mandating the repeated use of Proposition \ref{prop:Gevrey-Holder}. To illustrate, we present the full proof for \(\mathrm{I}_{5,3} : \) 

\begin{align*}
    \lvert \mathrm{I}_{5,3} \rvert &\lesssim  \alpha \lVert  u \rVert_{\mathcal{G}^{s}_2} \left( \lVert  u^\varepsilon  \rVert_{\mathcal{G}^{s}_2} \lVert  h^\varepsilon  \rVert_{\mathcal{G}^{2}_\infty}^2  + \lVert  u^\varepsilon  \rVert_{\mathcal{G}^{2}_\infty} \lVert  h^\varepsilon  \rVert_{\mathcal{G}^{2}_\infty} \lVert  h^\varepsilon  \rVert_{\mathcal{G}^{s}_2} \right) \\
    &\lesssim  \alpha \lVert  u \rVert_{\mathcal{G}^{s}_2} \left( ( \lVert  u \rVert_{\mathcal{G}^{s}_2} + \sqrt{C_{\mathrm{in}}})( \lVert  \partial_y h \rVert_{\mathcal{G}^{2}_2} + \sqrt{c_0})^2\right) + \\
    & \qquad\qquad\qquad\qquad+ \alpha \lVert  u \rVert_{\mathcal{G}^{s}_2}
    \left((\lVert \partial_y  u \rVert_{\mathcal{G}^{2}_2} + \sqrt{C_{\mathrm{in}}})( \lVert  \partial_y h \rVert_{\mathcal{G}^{2}_2} + \sqrt{c_0})(\lVert  h \rVert_{\mathcal{G}^{s}_2} + \sqrt{c_0})\right),\\ 
    &\lesssim \sqrt{\mathcal{D}_u} ( \sqrt{\mathcal{D}_u} \mathcal{E}_h  + \sqrt{\mathcal{D}_u c_0 \mathcal{E}_h} + c_0 \sqrt{\mathcal{D}_u} + C_{\mathrm{in}}^{\frac{3}{2}} + \sqrt{C_{\mathrm{in}}}\mathcal{E}_h ) \\ 
    &\qquad \qquad + (\sqrt{\mathcal{D}_u\mathrm{E}_u }+ \sqrt{\mathcal{D}_u C_{\mathrm{in}}} ) ( \sqrt{\mathcal{E}_h \mathcal{CK}_h}   +\frac{1}{\sqrt{\dot{\lambda}}} \sqrt{c_0 \mathcal{CK}_h} +  c_0   ) \\ 
    &\lesssim \mathcal{D}_u \mathcal{E}_h  + c_0 \mathcal{D}_u + C^{\frac{3}{2}}_{\mathrm{in}} \sqrt{\mathcal{D}_u} + \sqrt{C_{\mathrm{in}} \mathcal{D}_u} \mathcal{E}_h  \\
    &\qquad+  \sqrt{\mathcal{D}_u \mathrm{E}_u \mathcal{E}_h \mathcal{CK}_h}   + \sqrt{\mathcal{D}_u \mathrm{E}_u c_0 \mathcal{CK}_h} + c_0 \sqrt{\mathcal{D}_u \mathrm{E}_u}   + \sqrt{\mathcal{D}_u  \mathcal{E}_h \mathcal{CK}_h}  + \sqrt{\mathcal{D}_u  \mathcal{CK}_h} + c_0 \sqrt{\mathcal{D}_u C_{\mathrm{in}}}  \\
    &\lesssim  \mathcal{D}_u \mathcal{E}_h  + c_0 \mathcal{D}_u + C^{\frac{3}{2}}_{\mathrm{in}} \sqrt{\mathcal{D}_u} + \sqrt{C_{\mathrm{in}} \mathcal{D}_u} \mathcal{E}_h  + \mathrm{E}_u \mathcal{CK}_h  + c_0 \sqrt{\mathcal{D}_u \mathrm{E}_u}  +\frac{1}{32C} \sqrt{\mathcal{D}_u   \mathcal{CK}_h} + C_{\mathrm{in}} . 
\end{align*} 
The same strategy lets us control \(\mathrm{I}_{6,3}, \mathrm{I}_{5,4}\) and \(\mathrm{I}_{6,4} . \)  We merely write down the final estimate:
\begin{align*}
    \lvert \mathrm{I}_{6,3} \rvert &\lesssim  \alpha \lVert  v \rVert_{\mathcal{G}^{s}_2} \left(( \lVert  v \rVert_{\mathcal{G}^{s}_2} + \sqrt{C_{\mathrm{in}}})(\lVert \partial_y  h \rVert_{\mathcal{G}^{2}_2} + \sqrt{c_0})^2 \right) \\
    & \qquad\qquad +  \alpha \lVert  v \rVert_{\mathcal{G}^{s}_2} \left((\lVert  \partial_y v \rVert_{\mathcal{G}^{2}_2} +\sqrt{ C_{\mathrm{in}}} )(\lVert  \partial_y h \rVert_{\mathcal{G}^{2}_2} + \sqrt{c_0})(\lVert  h \rVert_{\mathcal{G}^{s}_2} + \sqrt{c_0})\right) \\
    &\lesssim  \mathcal{D}_u \mathcal{E}_h  + c_0 \mathcal{D}_u + C^{\frac{3}{2}}_{\mathrm{in}} \sqrt{\mathcal{D}_u} + \sqrt{C_{\mathrm{in}} \mathcal{D}_u} \mathcal{E}_h  + \mathrm{E}_u \mathcal{CK}_h  +c_0\sqrt{\mathcal{D}_u \mathrm{E}_u}  +\frac{1}{32C} \sqrt{\mathcal{D}_u   \mathcal{CK}_h} + C_{\mathrm{in}} ,  \\
    \lvert \mathrm{I}_{5,4} \rvert &\lesssim \alpha \lVert  u \rVert_{\mathcal{G}^{s}_2} \left( \lVert  u^\varepsilon  \rVert_{\mathcal{G}^{s}_2} \lVert h^\varepsilon  \rVert_{\mathcal{G}^{2}_\infty} + \lVert  u^\varepsilon  \rVert_{\mathcal{G}^{2}_\infty} \lVert  h^\varepsilon  \rVert_{\mathcal{G}^{s}_2}\right) \\
    &\lesssim  \mathcal{D}_u \sqrt{\mathcal{E}_h} + \mathcal{D}_u \sqrt{c_0} + \sqrt{\mathcal{D}_u C_{\mathrm{in}} \mathcal{E}_h} + \sqrt{\mathcal{D}_u C_{\mathrm{in}}} + \sqrt{\mathcal{D}_u \mathrm{E}_u \mathcal{CK}_h} + \sqrt{\mathcal{D}_u \mathrm{E}_u c_0} +\frac{1}{32C} \sqrt{\mathcal{D}_u \mathcal{CK}_h } \\
    \lvert \mathrm{I}_{6,4} \rvert &\lesssim \alpha \lVert v \rVert_{\mathcal{G}^{s}_2} \left( (\lVert v \rVert_{\mathcal{G}^{s}_2} + \sqrt{C_{\mathrm{in}}})( \lVert  \partial_y h \rVert_{\mathcal{G}^{2}_2} + c_0) + ( \lVert  \partial_y v \rVert_{\mathcal{G}^{2}_2} + \sqrt{C_{\mathrm{in}}})(\lVert  h \rVert_{\mathcal{G}^{s}_2} + \sqrt{c_0})\right) \\
    &\lesssim  \mathcal{D}_u \sqrt{\mathcal{E}_h} + \mathcal{D}_u \sqrt{c_0} + \sqrt{\mathcal{D}_u C_{\mathrm{in}} \mathcal{E}_h} + \sqrt{\mathcal{D}_u C_{\mathrm{in}}} + \sqrt{\mathcal{D}_u \mathrm{E}_u \mathcal{CK}_h} + \sqrt{\mathcal{D}_u \mathrm{E}_u c_0} + \frac{1}{32C}\sqrt{\mathcal{D}_u \mathcal{CK}_h}. 
\end{align*}

Combining the above inequalities, we obtain:
\begin{align*}
      \mathrm{I}_5 + \mathrm{I}_6 &=   \mathrm{I}_{5,2} + \mathrm{I}_{5,3} + \mathrm{I}_{5,4} + \mathrm{I}_{6,2} + \mathrm{I}_{6,3} + \mathrm{I}_{6,4} + \mathrm{I}_{5,L} + \mathrm{I}_{6,L} \\
      &\lesssim  C^{\frac{3}{2}}_{\mathrm{in}} \sqrt{\mathcal{D}_u}  + \sqrt{\mathcal{D}_u} \mathcal{E}_h + \sqrt{\mathcal{D}_u \mathcal{E}_h \mathcal{CK}_h} +\frac{1}{32 C} \sqrt{  \mathcal{D}_u {\mathcal{CK}_h}} + \mathcal{D}_u \mathcal{E}_h  + c_0 \mathcal{D}_u + \sqrt{C_{\mathrm{in}} \mathcal{D}_u} \mathcal{E}_h \\
      &\qquad + \mathrm{E}_u \mathcal{CK}_h  + c_0\sqrt{\mathcal{D}_u \mathrm{E}_u}  + C_{\mathrm{in}} + \mathcal{D}_u \sqrt{\mathcal{E}_h} + \sqrt{C_{\mathrm{in}} \mathcal{D}_u \mathcal{E}_h} + \sqrt{\mathcal{D}_u \mathrm{E}_u \mathcal{CK}_h}  + \sqrt{c_0\mathcal{D}_u \mathrm{E}_u }  . 
\end{align*}  
Using Young's inequality and \( c_0 \ll 1\), we get 
\begin{align}\label{eq:Control on I5 and I6}
   \begin{split}
    \lvert \mathrm{I}_5 \rvert &+  \lvert \mathrm{I}_6 \rvert - \frac{1}{16} \mathcal{D}_u - \frac{1}{16}{\mathcal{CK}_h} \\   &\lesssim  C^3_{\mathrm{in}} +C_{\mathrm{in}} (\mathcal{E}_h)^2 + \sqrt{\mathcal{D}_u \mathcal{CK}_h} \left(\sqrt{\mathrm{E}_u}+ \sqrt{\mathcal{E}_h} \right) + \mathcal{D}_u \mathcal{E}_h + \mathrm{E}_u \mathcal{CK}_h    + \mathrm{E}_u + \mathcal{D}_u \sqrt{\mathcal{E}_h} +C_{\mathrm{in}} \mathcal{E}_h  \\
    &\lesssim  C^3_{\mathrm{in}} + C_{\mathrm{in}}(\mathcal{E}_h)^2 + \mathrm{E}_u + (\mathcal{D}_u + \mathcal{CK}_h)(\mathcal{E}_h + \sqrt{\mathcal{E}_h} + \sqrt{\mathrm{E}_u} +  \mathrm{E}_u ) 
   \end{split}
\end{align}  
 Combining \eqref{eq:Control on I1 and I3}, \eqref{eq:Control on I2}, \eqref{eq:Control on I4}, and \eqref{eq:Control on I5 and I6}, we get 
 \begin{align*}
     &\frac{1}{2}\frac{\mathrm{d}}{\mathrm{d}t} \mathcal{E}_u  +   \mathcal{CK}_u  +  \mathcal{D}_u +  {\mathcal{CK}_h} - \frac{1}{16}\mathcal{D}_u - \frac{1}{16} \mathcal{CK}_h \\
     & \quad\lesssim (\mathcal{D}_u  + \mathcal{CK}_h)(\sqrt{\mathcal{E}_u} + \mathcal{E}_h + \sqrt{\mathcal{E}_h} + \sqrt{\mathrm{E}_u}  + \mathrm{E}_u   ) + ( \mathcal{E}_u  + \mathrm{E}_u  + C_{\mathrm{in}} )^3  . 
 \end{align*} 
 Concisely, we have 
\begin{equation}\label{eq:Bootstrap on Eu}
  \frac{1}{2}  \frac{\mathrm{d}}{\mathrm{d}t} \mathcal{E}_u  + \frac{1}{2}\mathcal{D}_u +   \mathcal{CK}_u  + \frac{1}{2}{\mathcal{CK}_h}  \lesssim  ( \mathcal{D}_u + \mathcal{CK}_h)(\sqrt{\mathcal{E}_h} + \sqrt{\mathrm{E}_u} +  \mathcal{E}_h + \mathrm{E}_u )  + (\mathcal{E}_u + \mathrm{E}_u + C_{\mathrm{in}})^3. 
\end{equation} 
\subsubsection{Control on \texorpdfstring{\(\mathcal{E}_h\)}{Eh}} 

So applying the operator \( \mathrm{A}_\sigma\) on the equation for \(h\) where \(\sigma := s - \frac{3}{4}\) and using the above for \(\sigma ' := \sigma\), we have 
\begin{align*}
    \beta \partial_{t} [ \mathrm{A}_{\sigma} ( \partial_t h )]& + \beta \dot{\lambda} \langle  \partial_x \rangle^{\frac{1}{2}} [ \mathrm{A}_{\sigma} (\partial_t h )]  + \mathrm{A}_{\sigma} (\partial_t h ) - \gamma ( \varepsilon ^2 \partial_{xx} \mathrm{A}_{\sigma} h + \partial_{yy} \mathrm{A}_{\sigma} h) \\
    & \qquad\qquad+ \mathrm{A}_{\sigma} ( u^\varepsilon  \partial_x h^\varepsilon ) + \mathrm{A}_{\sigma} ( v^\varepsilon  \partial_y h^\varepsilon ) = 0 
\end{align*} 

Inner-producting with \(\mathrm{A}_{\sigma} (\partial_t h)\), we obtain 
\begin{align*}
   & \frac{\beta}{2} \frac{\mathrm{d}}{\mathrm{d}t} \lVert  \partial_t h \rVert_{\mathcal{G}^{\sigma}_2}^2 + \beta \dot{\lambda} \lVert  \partial_t h \rVert_{\mathcal{G}^{\sigma + \frac{1}{4}}_2}^2 + \lVert  \partial_t h \rVert_{\mathcal{G}^{\sigma}_2}^2\\
     & = \gamma \langle  \mathrm{A}_{\sigma} ( \varepsilon ^2 \partial_{xx} h  + \partial_{yy} h ) , \mathrm{A}_{\sigma} (\partial_t h) \rangle  + \langle  \mathrm{A}_{\sigma} ( u^\varepsilon  \partial_x h^\varepsilon   + v^\varepsilon  \partial_y h^\varepsilon ), \mathrm{A}_{\sigma} (\partial_t h ) \rangle \\
    & = : \Pi  _1 + \Pi _2
\end{align*} 
Using integration by parts, we get 
\begin{align*}
    \Pi _1 &= \gamma \langle  \mathrm{A}_{\sigma} ( \varepsilon ^2 \partial_{xx} h  + \partial_{yy} h) , \partial_t [ \mathrm{A}_{\sigma} h]  \rangle + \gamma \langle  \mathrm{A}_{\sigma} ( \varepsilon ^2 \partial_{xx} h + \partial_{yy} h) , \dot{\lambda} \langle  \partial_x \rangle^{\frac{1}{2}} \mathrm{A}_{\sigma} h \rangle \\ 
    &= - \frac{\gamma}{2} \frac{\mathrm{d}}{\mathrm{d}t} \lVert  (\varepsilon \partial_x, \partial_y) h \rVert_{\mathcal{G}^{\sigma}_2}^2 - \gamma \dot{\lambda} \lVert  (\varepsilon \partial_x , \partial _y) h \rVert_{\mathcal{G}^{\sigma + \frac{1}{4}}_2}^2. 
\end{align*} 
We now deal with the term \(\dot{\lambda} \lVert  \partial_t h \rVert_{\mathcal{G}^{\sigma + \frac{1}{4}}_2}^2\) using the method introduced in \mbox{\cite{MR4753440}}: 

\begin{lemma}\label{lem:|∂ₜh|ₛ expansion}
    For \( \phi \in \mathcal{G}^{s}_2\) and \( \partial_t [ \mathrm{A}_{\sigma + \frac{1}{4}} \phi] \in L^2\), we have \( \partial_t \phi \in \mathcal{G}^{\sigma + \frac{1}{4}}_2\) satisfying 
    \[  \lVert  \partial_t \phi \rVert_{\mathcal{G}^{\sigma + \frac{1}{4}}_2}^2 = \lVert  \partial_t [ \mathrm{A}_{\sigma + \frac{1}{4}} \phi] \rVert_{L^2}^2 + \frac{\mathrm{d}}{\mathrm{d}t} \left( \dot{\lambda}^2 \lVert  \phi \rVert_{\mathcal{G}^{\sigma + \frac{1}{2}}_2}^2\right) - 2 \ddot{\lambda} \dot{\lambda} \lVert  \phi \rVert_{\mathcal{G}^{\sigma + \frac{1}{2}}_2}^2 + \dot{\lambda}^3 \lVert  \phi \rVert_{\mathcal{G}^{s}_2}^2 ,  
    \] 
    where \(\sigma := s - \frac{3}{4}\). 
\end{lemma}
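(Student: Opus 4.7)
\medskip

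\noindent\textbf{Proof proposal.} The plan is a direct Fourier/Plancherel computation built around the commutator identity \eqref{eq:Differentiating Aₛ}. Apply that identity with \(\sigma'=\sigma+\tfrac{1}{4}\) to rewrite
\[
 \mathrm{A}_{\sigma+\frac{1}{4}}(\partial_t\phi)\;=\;\partial_t[\mathrm{A}_{\sigma+\frac{1}{4}}\phi]\;+\;\dot\lambda\,\langle\partial_x\rangle^{\frac{1}{2}}\,[\mathrm{A}_{\sigma+\frac{1}{4}}\phi].
\]
The crucial observation is the relation \(\sigma+\tfrac{3}{4}=s\), which lets us absorb the extra half-derivative into the \(\mathrm{A}\) operator:
\(
 \langle\partial_x\rangle^{\frac{1}{2}}\mathrm{A}_{\sigma+\frac{1}{4}}\phi=\mathrm{A}_{\sigma+\frac{3}{4}}\phi=\mathrm{A}_s\phi,
\)
and similarly \(\langle\partial_x\rangle^{\frac{1}{4}}\mathrm{A}_{\sigma+\frac{1}{4}}\phi=\mathrm{A}_{\sigma+\frac{1}{2}}\phi\). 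So the right-hand side is already written in terms of the norms appearing in the claim.

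Next, I would take the \(L^2\) norm squared of the identity and expand, obtaining
\[
 \|\partial_t\phi\|_{\mathcal{G}^{\sigma+\frac{1}{4}}_2}^2 \;=\; \|\partial_t[\mathrm{A}_{\sigma+\frac{1}{4}}\phi]\|_{L^2}^2 \;+\; 2\dot\lambda\,\bigl\langle\partial_t[\mathrm{A}_{\sigma+\frac{1}{4}}\phi],\,\langle\partial_x\rangle^{\frac{1}{2}}\mathrm{A}_{\sigma+\frac{1}{4}}\phi\bigr\rangle \;+\; \dot\lambda^{2}\|\phi\|_{\mathcal{G}^{s}_2}^2.
\]
The third term is already the correct kind of contribution (up to the overall power of \(\dot\lambda\) one gets after multiplying by \(\dot\lambda\), which is how this lemma is meant to be applied inside \(\mathcal{CK}_h\)). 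The only piece that requires work is the cross term.

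For the cross term, I would distribute \(\langle\partial_x\rangle^{\frac{1}{2}}\) symmetrically as a product \(\langle\partial_x\rangle^{\frac{1}{4}}\cdot\langle\partial_x\rangle^{\frac{1}{4}}\) (this is rigorous via Plancherel, since everything is a Fourier multiplier in \(x\)), giving
\[
 2\dot\lambda\,\bigl\langle \langle\partial_x\rangle^{\frac{1}{4}}\partial_t[\mathrm{A}_{\sigma+\frac{1}{4}}\phi],\,\langle\partial_x\rangle^{\frac{1}{4}}\mathrm{A}_{\sigma+\frac{1}{4}}\phi\bigr\rangle\;=\;2\dot\lambda\bigl\langle \partial_t[\mathrm{A}_{\sigma+\frac{1}{2}}\phi],\,\mathrm{A}_{\sigma+\frac{1}{2}}\phi\bigr\rangle\;=\;\dot\lambda\,\frac{d}{dt}\|\phi\|_{\mathcal{G}^{\sigma+\frac{1}{2}}_2}^2,
\]
where I used that \(\langle\partial_x\rangle^{1/4}\) commutes with \(\partial_t\) (it is \(t\)-independent). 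After multiplying the whole identity by \(\dot\lambda\) (the scaling under which this lemma is actually applied in the \(\mathcal{CK}_h\) estimate), the cross contribution becomes \(\dot\lambda^{2}\,\frac{d}{dt}\|\phi\|_{\mathcal{G}^{\sigma+\frac{1}{2}}_2}^2\), and one final application of the product rule
\[
 \dot\lambda^{2}\frac{d}{dt}X \;=\; \frac{d}{dt}\bigl(\dot\lambda^{2}X\bigr)-2\dot\lambda\ddot\lambda\,X
\]
puts it into the form stated in the lemma, producing the two remaining terms \(\frac{d}{dt}(\dot\lambda^{2}\|\phi\|_{\mathcal{G}^{\sigma+\frac{1}{2}}_2}^2)\) and \(-2\dot\lambda\ddot\lambda\|\phi\|_{\mathcal{G}^{\sigma+\frac{1}{2}}_2}^2\).

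There is no real obstacle here: the whole argument is an algebraic manipulation of \eqref{eq:Differentiating Aₛ} together with Plancherel to split \(\langle\partial_x\rangle^{1/2}\) symmetrically, followed by a product-rule rewriting in \(t\). The only point requiring a brief justification is the legitimacy of \(\partial_t\) passing through \(\langle\partial_x\rangle^{1/4}\) and the existence of the \(L^{2}\) objects involved, which is precisely the content of the hypotheses \(\phi\in\mathcal{G}^{s}_2\) and \(\partial_t[\mathrm{A}_{\sigma+\frac{1}{4}}\phi]\in L^{2}\).
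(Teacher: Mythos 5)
Your proposal is correct and follows essentially the same route as the paper: rewrite $\mathrm{A}_{\sigma+\frac14}(\partial_t\phi)$ via \eqref{eq:Differentiating Aₛ}, expand the $L^2$ norm of the sum, identify the cross term as $\dot\lambda\,\frac{\mathrm{d}}{\mathrm{d}t}\lVert\phi\rVert_{\mathcal{G}^{\sigma+\frac12}_2}^2$ by symmetrically splitting $\langle\partial_x\rangle^{\frac12}$, and finish with the product rule in $t$. You also correctly flag that the displayed identity is the one obtained after multiplying through by $\dot\lambda$ (as it is used in $\mathcal{CK}_h$), which is consistent with the final line of the paper's own computation.
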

\begin{proof} Using \eqref{eq:Differentiating Aₛ}, a routine calculation shows that 
    \begin{align*}
         \lVert  \partial_t \phi\rVert_{\mathcal{G}^{\sigma + \frac{1}{4}}_2}^2 &= \lVert  \mathrm{A}_{\sigma + \frac{1}{4}} ( \partial_t \phi) \rVert_{L^2}^2  = \lVert \partial_t [ \mathrm{A}_{\sigma + \frac{1}{4}} \phi ] +  \dot{\lambda} \mathrm{A}_{\sigma + \frac{3}{4}} \phi \rVert_{L^2}^2 \\
          &=   \lVert  \partial_t[ \mathrm{A}_{\sigma + \frac{1}{4}} \phi] \rVert_{L^2}^2 + 2   \langle \partial_t[ \mathrm{A}_{\sigma + \frac{1}{4}} \phi] , \dot{\lambda} \mathrm{A}_{\sigma + \frac{3}{4}} \phi  \rangle +   \dot{\lambda}^2 \lVert  \mathrm{A}_{\sigma + \frac{3}{4} } \phi \rVert_{L^2}^2 \\
          &=    \lVert  \partial_t[ \mathrm{A}_{\sigma + \frac{1}{4}} \phi] \rVert_{L^2}^2   + \dot{\lambda} \frac{\mathrm{d}}{\mathrm{d}t} \lVert  \phi \rVert_{\mathcal{G}^{\sigma + \frac{1}{2}}_2}^2 +  \dot{\lambda} ^2 \lVert  \phi \rVert_{\mathcal{G}^{\sigma + \frac{3}{4}}_2}^2 \\
          &=   \lVert  \partial_t[ \mathrm{A}_{\sigma + \frac{1}{4}} \phi] \rVert_{L^2}^2 + \frac{1}{\dot{\lambda}}\frac{\mathrm{d}}{\mathrm{d}t} \left(\dot{\lambda} ^2 \lVert \phi \rVert_{\mathcal{G}^{\sigma + \frac{1}{2}}_2}^2\right) - 2\ddot{\lambda}   \lVert  \phi \rVert_{\mathcal{G}^{\sigma + \frac{1}{2}}_2}^2 +  \dot{\lambda}^2 \lVert  \phi \rVert_{\mathcal{G}^{s}_2}^2. \qedhere 
      \end{align*} 
\end{proof}  
Applying Lemma \ref{lem:|∂ₜh|ₛ expansion} by taking \( \phi = h\), we get 
\begin{align*}
  \frac{\beta}{2} \dot{\lambda} \lVert  \partial_t h \rVert_{\mathcal{G}^{\sigma + \frac{1}{4}}_2}^2 &= \frac{\beta}{2}\dot{\lambda} \lVert  \partial_t[ \mathrm{A}_{\sigma + \frac{1}{4}} h] \rVert_{L^2}^2 +  \frac{\beta}{2}\frac{\mathrm{d}}{\mathrm{d}t} \left(\dot{\lambda} ^2 \lVert h \rVert_{\mathcal{G}^{\sigma + \frac{1}{2}}_2}^2\right) - \beta\ddot{\lambda} \dot{\lambda} \lVert  h \rVert_{\mathcal{G}^{\sigma + \frac{1}{2}}_2}^2 +  \frac{\beta}{2}\dot{\lambda}^3 \lVert  h \rVert_{\mathcal{G}^{s}_2}^2. 
\end{align*} 

Combining the above, we have 
\begin{align*}
    &\frac{1}{2}\frac{\mathrm{d}}{\mathrm{d}t} \left( \lVert  (\sqrt{\beta}\partial _t,\sqrt{\gamma} \varepsilon \partial_x ,\sqrt{\gamma}\partial_y ) h \rVert_{\mathcal{G}^{\sigma}_2}^2 + \beta \dot{\lambda}^2 \lVert h \rVert_{\mathcal{G}^{\sigma + \frac{1}{2}}_2}^2 \right)  + \frac{\beta}{2} \dot{\lambda} \lVert \partial_t  h \rVert_{\mathcal{G}^{\sigma + \frac{1}{4}}_2}^2   \\
    &\qquad\qquad+ \frac{\beta}{2} \left(  \dot{\lambda}\lVert  \partial_t [ \mathrm{A}_{\sigma + \frac{1}{4}} h] \rVert_{L^2}^2 + \dot{\lambda}^3\lVert  h \rVert_{\mathcal{G}^{s}_2}^2\right)   + \lVert \partial_t h \rVert_{\mathcal{G}^{\sigma}_2}^2  + \gamma\dot{\lambda} \lVert  (\varepsilon \partial_x, \partial_y) h \rVert_{\mathcal{G}^{\sigma + \frac{1}{4}}_2}^2  \\
    &= \Pi _2 +{ \ddot{\lambda} \dot{\lambda} } \lVert  h \rVert_{\mathcal{G}^{\sigma + \frac{1}{2}}_2}^2  
\end{align*} 
Letting \(\ddot{\lambda}  = 0\) we can drop the second term. For the former, letting \( \beta \dot{\lambda} \geq 1\) for \(t \leq T_0\) and using Corollary \ref{cor:Gevrey-Holder with δ}:
\begin{align*}
     \lvert \Pi _2 \rvert &\lesssim \gamma\lVert  \partial_t h \rVert_{\mathcal{G}^{\sigma + \frac{1}{4}}_2} \left( \lVert u^\varepsilon   \rVert_{\mathcal{G}^{\sigma - \frac{1}{4}}_2} \lVert \partial_x h^\varepsilon  \rVert_{\mathcal{G}^{2}_\infty} + \lVert  u^\varepsilon  \rVert_{\mathcal{G}^{2}_\infty} \lVert  \partial_x h^\varepsilon  \rVert_{\mathcal{G}^{\sigma - \frac{1}{4}}_2} \right) \\
     & \qquad\qquad + \gamma \lVert  \partial_t h \rVert_{\mathcal{G}^{\sigma + \frac{1}{4}}_2} \left( \lVert  v^\varepsilon  \rVert_{\mathcal{G}^{\sigma  - \frac{1}{4}}_\infty} \lVert  \partial_y h^\varepsilon  \rVert_{\mathcal{G}^{2}_2} + \lVert  v^\varepsilon  \rVert_{\mathcal{G}^{2}_\infty} \lVert  \partial_ y h^\varepsilon  \rVert_{\mathcal{G}^{\sigma - \frac{1}{4}}_2}\right)  \\
     &\lesssim  \lVert  \partial_t h \rVert_{\mathcal{G}^{\sigma + \frac{1}{4}}_2} ( \lVert  u \rVert_{\mathcal{G}^{s}_2} + \sqrt{C_{\mathrm{in}}} ) \left(\lVert  \partial_y h \rVert_{\mathcal{G}^{\sigma - \frac{1}{4}}_2} +  \lVert  h \rVert_{\mathcal{G}^{s}_2}  + \sqrt{C^h_{\mathrm{in}}} \right) \\
     &\lesssim \frac{1}{\sqrt{\dot{\lambda}}}  \sqrt{\mathcal{CK}_h} ( \sqrt{\mathcal{E}_u} + \sqrt{C_{\mathrm{in}}} ) ( \sqrt{\mathcal{CK}_h} + \sqrt{C^h_{\mathrm{in}}} )\\
     &\lesssim  \mathcal{CK}_h \sqrt{\mathcal{E}_u} + \frac{\sqrt{C_{\mathrm{in}}}}{\sqrt{\dot{\lambda}}}\mathcal{CK}_h   + \sqrt{\mathcal{CK}_h   \mathcal{E}_u} +\frac{\sqrt{C_{\mathrm{in}}}}{\sqrt{\dot{\lambda}}} \sqrt{\mathcal{CK}_h}. 
\end{align*} 
Moreover, using \(\dot{\lambda} \geq 32 C C_{\mathrm{in}}\), we get  
\begin{equation}\label{eq:Bootstrap on Ehₙ}
    \frac{\mathrm{d}}{\mathrm{d}t} \mathcal{E}_h  + \frac{1}{2} \mathcal{CK}_h  + \mathcal{D}_h - \frac{1}{16} \mathcal{CK}_h \lesssim  \mathcal{CK}_h \sqrt{\mathcal{E}_u} + \mathcal{E}_u + C_{\mathrm{in}}.
\end{equation} 
\subsubsection{Control on \texorpdfstring{\(\mathrm{E}_u\)}{Eu}} 

A direct calculation shows that 
\begin{align*}
    &\lVert  \partial_t  (u, \varepsilon v) \rVert_{\mathcal{G}^{s - 2}_2}^2  + \langle  \mathrm{A}_{s - 2}(u^\varepsilon  \partial_x u^\varepsilon + v^\varepsilon  \partial_y u^\varepsilon ) , \mathrm{A}_{s - 2} (\partial_t u) \rangle \\
    &\qquad+ \varepsilon ^2 \langle  \mathrm{A}_{s - 2} ( u^\varepsilon  \partial_x v^\varepsilon  + v^\varepsilon  \partial_y v^\varepsilon _ n) , \mathrm{A}_{s - 2} ( \partial_t v) \rangle    - \langle  \mathrm{A}_{s - 2} ( \varepsilon ^2 \partial_{xx} u^\varepsilon  + \partial_{yy} u^\varepsilon ) , \mathrm{A}_{s - 2} ( \partial_t u) \rangle \\
    &\qquad  - \varepsilon ^2 \langle  \mathrm{A}_{s - 2} ( \varepsilon ^2 \partial_{xx} v^\varepsilon   + \partial_{yy} v^\varepsilon  ), \mathrm{A}_{s -2} ( \partial_t v) \rangle   \\
    &= \alpha \langle  \mathrm{A}_{s - 2} ( f^\varepsilon  + f^\varepsilon  h^\varepsilon   - u^\varepsilon  - u^\varepsilon  (h^\varepsilon )^2 - 2 u^\varepsilon  h^\varepsilon ) , \mathrm{A}_{s - 2} ( \partial_t u) \rangle \\ 
    & \qquad\qquad - \alpha \langle  \mathrm{A}_{s - 2} ( e^\varepsilon  + e^\varepsilon  h^\varepsilon  + v^\varepsilon  + v^\varepsilon  (h^\varepsilon )^2 - 2 v^\varepsilon  h^\varepsilon ) , \mathrm{A}_{s - 2} ( \partial_t v) \rangle. 
\end{align*} 
Using \eqref{eq:Differentiating Aₛ}, we have 
\begin{align*}
      &-\langle  \mathrm{A}_{s - 2} ( \varepsilon ^2 \partial_{xx} u + \partial_{yy} u) , \mathrm{A}_{s - 2} ( \partial_t u) \rangle \\
      &\qquad\qquad= -\langle  \mathrm{A}_{s - 2} ( \varepsilon ^2 \partial_{xx} u + \partial_{yy} u) , \partial_t [ \mathrm{A}_{s - 2} u]  \rangle - \dot{\lambda} \langle  \mathrm{A}_{\sigma - 1 } ( \varepsilon ^2 \partial_{xx} u + \partial_{yy} u) , \mathrm{A}_{\sigma - 1  } u \rangle \\
      &\qquad\qquad=  \frac{1}{2}\frac{\mathrm{d}}{\mathrm{d}t} \lVert (\varepsilon \partial_x, \partial_y) u \rVert_{\mathcal{G}^{s - 2}_2}^2  + \dot{\lambda} \lVert  ( \varepsilon \partial_x, \partial_y) u \rVert_{\mathcal{G}^{\sigma - 1}_2}^2, \\
      &-\varepsilon ^2\langle  \mathrm{A}_{s - 2} ( \varepsilon ^2 \partial_{xx} v + \partial_{yy} v) , \mathrm{A}_{s - 2} ( \partial_t v) \rangle \\ 
      & \qquad\qquad=  \frac{1}{2}\frac{\mathrm{d}}{\mathrm{d}t} \lVert  (\varepsilon \partial_x, \partial_y) \varepsilon v \rVert_{\mathcal{G}^{s}_2}^2 + \dot{\lambda} \lVert  ( \varepsilon \partial_x, \partial_y) \varepsilon v \rVert_{\mathcal{G}^{\sigma  - 1}_2}^2  \\
      &\qquad\qquad= \frac{1}{2} \frac{\mathrm{d}}{\mathrm{d}t}  \lVert  (\varepsilon ^2 \partial_x v , \varepsilon \partial_x u) \rVert_{\mathcal{G}^{s}_2}^2 + \dot{\lambda} \lVert  (\varepsilon ^2 \partial_x v , \varepsilon \partial_x u) \rVert_{\mathcal{G}^{\sigma - 1}_2}^2. 
\end{align*} 
We end up having 
\begin{align*}
   & \lVert  \partial_t(u, \varepsilon v) \rVert_{\mathcal{G}^{s - 2}_2}^2  + \frac{1}{2}\frac{\mathrm{d}}{\mathrm{d}t}   \lVert ( 2 \varepsilon \partial_x u , \partial_y u , \varepsilon ^2 \partial_x v)\rVert_{\mathcal{G}^{s - 2}_2}^2  + \dot{\lambda}  \lVert  (2 \varepsilon \partial_x u , \partial _y u , \varepsilon ^2 \partial_x v) \rVert_{\mathcal{G}^{\sigma - 1}_2}^2  \\
    &\qquad\qquad= \alpha \langle  \mathrm{A}_{s - 2} f^\varepsilon  , \mathrm{A}_{s - 2}( \partial_t u ) \rangle   +\alpha \langle  \mathrm{A}_{s - 2} (f^\varepsilon  h^\varepsilon ), \mathrm{A}_{s  -2} ( \partial_t u) \rangle - { \alpha\langle \mathrm{A}_{s - 2} u^\varepsilon  , \mathrm{A}_{s - 2} ( \partial_t u) \rangle}  \\
    &\qquad\qquad \quad- \alpha \langle  \mathrm{A}_{s - 2} ( u^\varepsilon  ( h^\varepsilon )^2) , \mathrm{A}_{ s- 2} ( \partial_t u)  \rangle    - 2\alpha \langle  \mathrm{A}_{s - 2}( u^\varepsilon  h^\varepsilon  ) , \mathrm{A}_{ s - 2} ( \partial _t u )  \rangle \\
    &\qquad\qquad\quad - \alpha \langle  \mathrm{A}_{ s -2} ( e^\varepsilon _ n), \mathrm{A}_{ s- 2} ( \partial_t v) \rangle - \alpha \langle  \mathrm{A}_{ s - 2} ( e^\varepsilon  h^\varepsilon  ), \mathrm{A}_{ s- 2} ( \partial_t v) \rangle -  { \alpha \langle  \mathrm{A}_{s - 2} v^\varepsilon  , \mathrm{A}_{ s- 2} (\partial_t v) \rangle}  \\
    &\qquad\qquad\quad - \alpha \langle  \mathrm{A}_{ s - 2} ( v^\varepsilon  (h^\varepsilon )^2) ,\mathrm{A}_{ s- 2} ( \partial_t v)  \rangle -2 \alpha \langle  \mathrm{A}_{ s - 2} ( v^\varepsilon  h^\varepsilon ) , \mathrm{A}_{ s- 2} ( \partial_t v) \rangle \\
   &\qquad\qquad\quad - \langle  \mathrm{A}_{ s- 2} u^\varepsilon  \partial_x u^\varepsilon    , \mathrm{A}_{ s- 2} ( \partial _t u) \rangle - \langle  \mathrm{A}_{ s- 2} ( v^\varepsilon  \partial_y u^\varepsilon  ) , \mathrm{A}_{ s- 2} ( \partial_t u) \rangle \\
   & \qquad\qquad\quad - \varepsilon ^2\langle  \mathrm{A}_{ s- 2} ( u^\varepsilon  \partial_x v^\varepsilon   + v^\varepsilon  \partial_y v^\varepsilon  ) , \mathrm{A}_{ s-2} ( \partial_t v)  \rangle  \\  
   &=: \mathrm{J}_{1} + \mathrm{J}_2 + \mathrm{J}_3 + \mathrm{J} _4 + \mathrm{J}_5 + \mathrm{K}_1 + \mathrm{K}_2 + \mathrm{K} _3 + \mathrm{K}_4 + \mathrm{K}_5 + \mathcal{N} 
\end{align*} 
The terms \( \mathrm{K}_i\) contains \(\partial_t v\) which loses a derivative through \( v = - \partial_x\int_{0}^y  u\). {However, the situation is salvageable as the terms it is inner-producted with have a \(\mathcal{G}^{s-1}_2\) control like \(v^\varepsilon\)}  (via divergence-free condition) and \(e^\varepsilon\) (via Proposition \ref{prop:Elliptic Estimates}). The same estimate holds for \(\mathrm{J}_i\) as there is no loss of derivative through \( \partial_t u\). As such, we have using Corollary \ref{cor:Gevrey-Holder with δ} and \(s - 1 < \sigma : \)
\begin{align*}
    \lvert \mathrm{J}_1 \rvert +  \lvert \mathrm{K}_1 \rvert &\lesssim  \alpha \lVert  \partial_t ( u , v) \rVert_{\mathcal{G}^{s-3}_2} \lVert  (e^\varepsilon  , f^\varepsilon ) \rVert_{\mathcal{G}^{ s- 1}_2} \lesssim   \lVert  \partial_t u \rVert_{\mathcal{G}^{s-2}_2} \left(\lVert  \partial_t h \rVert_{\mathcal{G}^{\sigma}_2} + \sqrt{c_0}\right) \\ 
    &\lesssim \sqrt{\mathrm{D}_u} ( \sqrt{\mathcal{E}_h} + \sqrt{c_0}) \\
    \lvert \mathrm{J}_3 \rvert +  \lvert \mathrm{K}_3 \rvert &\lesssim  \alpha \lVert  (u^\varepsilon  , v^\varepsilon ) \rVert_{\mathcal{G}^{s - 1}_2} \lVert  \partial_t (u  , v) \rVert_{\mathcal{G}^{s - 3}_2} \lesssim   \lVert  u^\varepsilon \rVert_{\mathcal{G}^{s}_2}  \lVert  \partial_t u \rVert_{\mathcal{G}^{s -2}_2}  \\  
    &\lesssim \sqrt{\mathrm{D}_u \mathcal{E}_h} + \sqrt{\mathrm{D}_u C_{\mathrm{in}}}. 
\end{align*} 
The estimate on the remaining quantities follow similarly as above but requires an additional application of Proposition \ref{prop:Gevrey-Holder}. We elaborate the first estimate fully and then merely present the final estimate for the remaining quantities: 
\begin{align*} 
    \lvert \mathrm{J}_2 \rvert +  \lvert \mathrm{K}_2 \rvert &\lesssim  \alpha \lVert  \partial_t u \rVert_{\mathcal{G}^{s - 3}_2} \left( \lVert  f^\varepsilon  \rVert_{\mathcal{G}^{s - 1}_2} \lVert  h^\varepsilon  \rVert_{\mathcal{G}^{2}_2} + \lVert  f^\varepsilon  \rVert_{\mathcal{G}^{2}_\infty} \lVert  h^\varepsilon  \rVert_{\mathcal{G}^{s - 1}_2}\right) \\
    & \qquad\qquad + \alpha \lVert  \partial_t v  \rVert_{\mathcal{G}^{s - 3}_2} \left( \lVert  e^\varepsilon  \rVert_{\mathcal{G}^{s -1}_2} \lVert h^\varepsilon  \rVert_{\mathcal{G}^{2}_\infty} + \lVert  e^\varepsilon  \rVert_{\mathcal{G}^{2}_\infty} \lVert  h^\varepsilon  \rVert_{\mathcal{G}^{s - 1}_2}\right) \\
    &\lesssim   \lVert  \partial_t u \rVert_{\mathcal{G}^{ s- 2}_2} \left( \lVert  (f ^\varepsilon, e^\varepsilon) \rVert_{\mathcal{G}^{s - 1}_2} \lVert h^\varepsilon \rVert_{\mathcal{G}^{2}_\infty} + \lVert  (f^\varepsilon , e^\varepsilon) \rVert_{\mathcal{G}^{2}_\infty} \lVert  h^\varepsilon \rVert_{\mathcal{G}^{s - 1}_2}   \right) \\
    &\lesssim    \lVert  \partial_t u \rVert_{\mathcal{G}^{s - 2}_2}   \lVert  \partial_t h^\varepsilon \rVert_{\mathcal{G}^{\sigma}_2} \lVert h^\varepsilon \rVert_{\mathcal{G}^{ s-1}_2}  \lesssim  \sqrt{\mathrm{D}_u} ( \sqrt{\mathcal{E}_h} + \sqrt{C_{\mathrm{in}}}) ( \sqrt{\mathcal{E}_h} + \sqrt{C^h_{\mathrm{in}}})\\
    &\lesssim  \sqrt{\mathrm{D}_u}( \mathcal{E}_h + \sqrt{C_{\mathrm{in}}\mathcal{E}_h} + \sqrt{C_{\mathrm{in}}})\\
      \lvert \mathrm{J}_4 \rvert +  \lvert \mathrm{K}_4 \rvert & \lesssim   \alpha \lVert   \partial_t (u, v) \rVert_{\mathcal{G}^{s- 3}_2} \left( \lVert (u^\varepsilon ,v^\varepsilon ) \rVert_{\mathcal{G}^{s - 1}_2} \lVert  h^\varepsilon  \rVert_{\mathcal{G}^{2}_\infty}^2  + \lVert  (u^\varepsilon ,v^\varepsilon )  \rVert_{\mathcal{G}^{2}_\infty} \lVert  h^\varepsilon  \rVert_{\mathcal{G}^{2}_\infty} \lVert  h^\varepsilon   \rVert_{\mathcal{G}^{s - 1}_2}\right) \\
      &\lesssim  \sqrt{\mathrm{D}_u}(\mathcal{E}_h + C^h_{\mathrm{in}})\left(  \sqrt{\mathcal{E}_u} +  \sqrt{\mathrm{E}_u} + \sqrt{C_{\mathrm{in}}}\right) ,\\ 
       \lvert \mathrm{J}_5 \rvert +  \lvert \mathrm{K}_5 \rvert &\lesssim  \alpha \lVert  \partial_t (u , v) \rVert_{\mathcal{G}^{s - 3}_2} \left( \lVert  (u^\varepsilon  , v^\varepsilon )\rVert_{\mathcal{G}^{s - 1}_2} \lVert  h^\varepsilon  \rVert_{\mathcal{G}^{2}_\infty} + \lVert (u^\varepsilon  , v^\varepsilon )\rVert_{\mathcal{G}^{2}_\infty} \lVert  h^\varepsilon  \rVert_{\mathcal{G}^{s-1}_2}\right) \\
       &\lesssim  \sqrt{C_{\mathrm{in}}\mathrm{D}_u} ( \sqrt{\mathcal{E}_u \mathcal{E}_h} + \sqrt{\mathcal{E}_u} + \sqrt{ \mathcal{E}_h} + \sqrt{C^h_{\mathrm{in}}}), 
\end{align*} 
For the nonlinear terms, there is no need to distribute the derivatives as there is no derivative loss for \( \varepsilon \partial_t v\). So using Proposition \ref{prop:Gevrey-Holder}:
\begin{align*} 
         \lvert \mathcal{N} \rvert &\lesssim  \lVert  \partial_t u \rVert_{\mathcal{G}^{s - 2}_2} \left( \lVert  u^\varepsilon  \rVert_{\mathcal{G}^{s-2}_2} \lVert  \partial_x u^\varepsilon  \rVert_{\mathcal{G}^{2}_\infty} + \lVert  u^\varepsilon  \rVert_{\mathcal{G}^{2}_\infty} \lVert  \partial_x u^\varepsilon \rVert_{\mathcal{G}^{s-2}_2} + \lVert  v^\varepsilon  \rVert_{\mathcal{G}^{s -2}_\infty} \lVert  \partial_y u^\varepsilon  \rVert_{\mathcal{G}^{2}_2} \right)  \\
         &\qquad\qquad  + \lVert  \partial_t u \rVert_{\mathcal{G}^{s - 2}_2}\lVert  v^\varepsilon  \rVert_{\mathcal{G}^{2}_\infty} \lVert  \partial_y u^\varepsilon  \rVert_{\mathcal{G}^{s-2}_2}\\
         & \qquad\qquad  + \lVert \varepsilon \partial_t  v  \rVert_{\mathcal{G}^{s-2}_2} \left( \lVert  (u^\varepsilon  , v^\varepsilon ) \rVert_{\mathcal{G}^{s - 2}_2} \lVert \varepsilon \nabla v^\varepsilon  \rVert_{\mathcal{G}^{2}_{\infty}}  + \lVert  (u^\varepsilon  ,v^\varepsilon ) \rVert_{\mathcal{G}^{2}_\infty} \lVert \varepsilon  \nabla v^\varepsilon  \rVert_{\mathcal{G}^{s -2}_2}\right)   \\
         &\lesssim  \lVert  \partial_t(u , \varepsilon v) \rVert_{\mathcal{G}^{s-2}_2}  \left( \lVert  u^\varepsilon  \rVert_{\mathcal{G}^{s-1}_2} \lVert  \partial_y u^\varepsilon  \rVert_{\mathcal{G}^{3}_2}+ \lVert  u^\varepsilon  \rVert_{\mathcal{G}^{s}_2}\lVert  \partial_y u^\varepsilon  \rVert_{\mathcal{G}^{s-2}_2}  + \lVert \partial_y u^\varepsilon  \rVert_{\mathcal{G}^{3}_2} \lVert  \varepsilon \partial_x u^\varepsilon  \rVert_{\mathcal{G}^{s - 2}_2}  \right)  \\ 
         &\lesssim \sqrt{\mathcal{D}_u}(\sqrt{\mathrm{E}_u} + \sqrt{C_{\mathrm{in}}}) (\sqrt{\mathcal{E}_u} +\sqrt{C_{\mathrm{in}}}   )  \lesssim  \sqrt{C_{\mathrm{in}} \mathrm{D}_u} ( \mathcal{E}_u + \mathrm{E}_u + C_{\mathrm{in}} ). 
\end{align*}   
Combining the inequalities using Young's inequality to collect \(\mathrm{D}_u\) on left-hand side, we conclude 
\begin{equation}\label{eq:Bootstrap on Elow}
    \frac{1}{2} \mathrm{D}_u + \frac{1}{2}\frac{\mathrm{d}}{\mathrm{d}t}\mathrm{E}_u + \mathrm{CK}_u  - \frac{1}{32}\mathrm{D}_u\lesssim  ( \mathcal{E}_u  + \mathrm{E}_u  + \mathcal{E}_h + C_{\mathrm{in}} )^2
\end{equation}  
\subsubsection{Conclusion}
Combining \eqref{eq:Bootstrap on Eu}, \eqref{eq:Bootstrap on Ehₙ} and \eqref{eq:Bootstrap on Elow}, we obtain 
\begin{align*}
    \frac{\mathrm{d}}{\mathrm{d}t} (\mathcal{E}_u + {\mathcal{E}_h} + \mathrm{E}_u) &+ \frac{1}{2}\mathcal{D}_u + \frac{1}{2}\mathcal{CK}_u + \frac{1}{2}{\mathcal{CK}_h} + \frac{1}{2}  {\mathcal{D}_h}   + \mathrm{D}_u + \mathrm{CK}_u - \frac{1}{16}({\mathcal{CK}_h} + \mathcal{D}_u + \mathrm{D}_u) \\
    &\lesssim   ( \mathcal{D}_u + \mathcal{CK}_h)(\sqrt{\mathcal{E}_h} + \sqrt{\mathrm{E}_u} +  \mathcal{E}_h + \mathrm{E}_u )  + (\mathcal{E}_u + \mathrm{E}_u + C_{\mathrm{in}})^3 . 
\end{align*} 
In particular, we have 
\begin{align}\label{eq:Full Bootstrap for LW}
    \begin{split}
    \frac{\mathrm{d}}{\mathrm{d}t}(\mathcal{E}_u +\mathrm{E}_u +{\mathcal{E}_h})  &  + \frac{1}{2} \mathcal{D}_u + \frac{1}{2}{\mathcal{CK}_h}    \\
    &\leq C  (\mathcal{D}_u + \mathcal{CK}_h)(\sqrt{\mathcal{E}_h + \mathrm{E}_u } + \mathcal{E}_h + \mathrm{E}_u ) +  C ( \mathcal{E}_u + \mathrm{E}_u + \mathcal{E}_h+ C_{\mathrm{in}})^3 . 
    \end{split}
\end{align} 
We now prove the precise description of \(T\). Motivated by the assumption on \(\lambda \), we take 
\[  \dot{\lambda}(t)  = 32 C C_{\mathrm{in}}  \implies \lambda(t) = 32 C C_{\mathrm{in}} t \implies  T' := \frac{\delta _0}{64 C C_{\mathrm{in}}}. 
\] 
We denote 
\[  T^\# := \sup \, \left\{t \in [0, T']  :  \sup_{\tau \in [0,t]}\,\left(  \mathcal{E}_u +\mathrm{E}_u +   \mathcal{E}_h + \int_{0} ^\tau (\mathcal{D}_u  + \mathcal{CK}_h)\right)\leq \frac{1}{32 C^2} \right\}. 
\] 
As the initial data is \(0\) and the energies are continuous in time so \( T^\#\) is positive. 
Using \eqref{eq:Full Bootstrap for LW}, we obtain for \(t \leq T^\# : \) 
\[   \frac{\mathrm{d}}{\mathrm{d}t}(\mathcal{E}_u +\mathrm{E}_u + \mathcal{E}_h) + \mathcal{D}_u + \mathcal{CK}_h \leq 4C C_{\mathrm{in}}^2 (\mathcal{E}_u + \mathrm{E}_u + \mathcal{E}_h + C_{\mathrm{in}}). 
\] 
Integrating in time gives us for \(t \leq T^\#\), 
\[  \mathcal{E}_u + \mathrm{E}_u + \mathcal{E}_h + \int_{0}^t (\mathcal{D}_u  + \mathcal{CK}_h) \leq 8 C C_{\mathrm{in}}^3 e^{4 C C_{\mathrm{in}}^2 t } . 
\] 
Let us take \(T_0 \) to be defined as 
\[  T_0 := \min \,  \left\{\frac{\delta _0}{64 C C_{\mathrm{in}}} , \frac{1}{4 C C_{\mathrm{in}}^2} \ln  \frac{c_0}{16 C C_{\mathrm{in}}^3}\right\} \implies 8 C C_{\mathrm{in}}^3 e^{4 C C_{\mathrm{in}}^2  T_0} \leq \frac{c_0}{2}. 
\] 
Then we deduce that for all \(t \leq \min \, \{T^\# , T_0\} = T_0\), we have 
\begin{equation}\label{eq:Final Conclusion for LW}
    \mathcal{E}_u + \mathrm{E}_u + \mathcal{E}_h + \int_{0}^t (\mathcal{D}_u  + \mathcal{CK}_h) \leq \frac{c_0}{2}. 
\end{equation} 
Setting \( T := T_0\), the estimate \eqref{eq:Estimate on u for LW} and \eqref{eq:Estimate on h for LW} follows from \eqref{eq:Final Conclusion for LW}. 
\end{proof}

\section{Justification of the Approximation in Gevrey-2 class}  
In this section, we prove the solution to the Navier-Stokes Maxwell \eqref{eq:System for (u^e,b^e)} converges to the solution to the Hydrostatic Navier-Stokes Maxwell \eqref{eq:System for (u^p,b^p)} as \( \varepsilon \to 0_+\) and study the convergence rate. It is natural to study the equation of \( (u^R, v^R, h^R, e^R, f^R, p^R) = ( u^\varepsilon - u^P , v^\varepsilon - v^P , h^\varepsilon - h^P , e^\varepsilon - e^P , f^\varepsilon - f^P , p^\varepsilon - p^P )\) which satisfy 
\begin{equation}\label{eq:Equations for u^R}
    \begin{cases} \partial_t u^R  - \varepsilon ^2 \partial_{xx} u^R  - \partial_{yy} u^R   + \partial_x p^R  + N_u + S_u  = \alpha (f^R  - u^R) + F_u, \\ 
       S_v + \partial_y p^P  = - \alpha (e^R + v^R ) + F_v, \\ 
        \partial_x u^R + \partial_y v^R = \partial_x e^R  +\partial_y f^R = 0, \\ 
        \partial_t h^R + \partial_x f^R - \partial_y e^R = 0, \\
        \beta \partial_{tt} h^R + \partial_t h^R  - \gamma(\varepsilon ^2 \partial_{xx} + \partial_{yy}   ) h^R   + S_h   + N_h= 0 , \\
  {  (u^R, v^R, h^R, \partial_ y e^R, f^R)|_{y = 0,1} = 0 , } \\
{ (u^R, v^R, h^R, \partial_y e^R , f^R)|_{t = 0} = 0 .}  \end{cases}  
\end{equation} 
where 
\begin{align*}
    N_u &:= (u^\varepsilon, v^\varepsilon) \cdot \nabla u^\varepsilon - (u^P, v^P) \cdot \nabla u ^P     =  (u^\varepsilon, v^\varepsilon) \cdot \nabla u^R + (u^R, v^R) \cdot \nabla u^P , \\
    S_u &:= \varepsilon ^2 \partial_{xx}  u^P, \\  
    F_u &:= \alpha(f^\varepsilon h^\varepsilon - 2 u^\varepsilon h^\varepsilon - u^\varepsilon (h^\varepsilon)^2 - f^P h^P   +  2 u^P h^P + u^P (h^P)^2), \\
    S_v &:= \varepsilon ^2 ( \partial_t v^\varepsilon  + u^\varepsilon \partial_x v^\varepsilon  + v^\varepsilon \partial_y v^\varepsilon - \varepsilon ^2 \partial_{xx} v^\varepsilon - \partial_{yy} v^\varepsilon  ), \\
    F_v &:= -\alpha (e^\varepsilon h^\varepsilon + 2 v^\varepsilon h^\varepsilon  +v^\varepsilon  (h^\varepsilon)^2  - e^P h^P - 2 v^P h^P - v^P (h^P)^2 ), \\ 
    S_h &:= \gamma \varepsilon ^2 \partial_{xx} h^P , \\ 
     N_h &:=    (u^\varepsilon, v^\varepsilon) \cdot \nabla h^R + (u^R, v^R) \cdot \nabla h^P.
\end{align*} 
We introduce the following energy functionals: for \(\sigma = s - \frac{3}{4}\) and \( s \geq 10 : \)  
\begin{align}\label{eq:Convergence:Definition of energies}
    \begin{split} 
        \mathcal{E}_u &:= \lVert  u^R  \rVert_{\mathcal{G}^{ s - 4}_2}^2,  \qquad\qquad\quad \mathcal{D}_u:= \lVert  ( \varepsilon \partial_x  , \partial_y  )u^R \rVert_{\mathcal{G}^{s - 4}_2}^2  + \alpha \lVert (u^R, v^R) \rVert_{\mathcal{G}^{s - 4}_2}^2 , \\  \mathrm{E}^{u} &:= \lVert  \partial_y u^R \rVert_{\mathcal{G}^{ s - 6}_{2}}^2 ,\qquad\qquad \mathrm{D}_u  := \lVert  \partial_t u^R \rVert_{\mathcal{G}^{s - 6}_2}^2,  \\
        \mathcal{E}_h &:= \lVert  (\beta \partial_t, \gamma \varepsilon \partial_x, \gamma \partial_y) h^R \rVert_{\mathcal{G}^{ \sigma- 4}_2}^2 + \beta\dot{\lambda} ^2 \lVert  h^R \rVert_{\mathcal{G}^{ \sigma   - 4+ \frac{1}{2 }}_{2}}^2. 
    \end{split}
\end{align} 
As in the proof of Theorem \ref{thm:Local well-posedness}, we take \(\lambda\) to be defined as in the result. Then thanks to Remark \ref{rmk:Estimates extend to system for u^p}, we get that the system \eqref{eq:System for (u^p,h^p)} has a unique solution \((u^P, e^P, h^P)\) on \( [0,T_0]\) with \( T_0\) being determined by the previous proof so that for \( t \leq T_0: \)  
\begin{equation}\label{eq:Smallness on Eu for convergence}
    \lVert  (u^P, u^\varepsilon) \rVert_{\mathcal{G}^{s}_2}^2   + \lVert  \partial_y (u^P, u^\varepsilon) \rVert_{\mathcal{G}^{s - 2}_2}^2  + \int_{0}^t \lVert  \partial_y (u^P, u^\varepsilon)   \rVert_{\mathcal{G}^{s}_2}^2 + \int_{0}^t  \lVert  \partial_t (u^P, u^\varepsilon)  \rVert_{\mathcal{G}^{s - 2}_2}^2 \leq C_{\mathrm{in}},
\end{equation}  
and 
\begin{equation}\label{eq:Smallness on Eh for convergence}
    \lVert  (\partial_t, \partial_y) (h^P, h^\varepsilon) \rVert_{\mathcal{G}^{\sigma}_2}^2  \leq c,  
\end{equation}   
We now establish some estimates pertaining to the nonlinearities appearing in the system. 
\subsection{A Priori Estimates.}

\begin{lemma}[Estimates for the small terms]\label{lem:Estimates for the small terms}
    For any \(t \leq T_0  \) and \(\varepsilon \in [0,1]\), we have  
    \[  \lVert  S_u \rVert_{\mathcal{G}^{s - 4}_2}  \leq \sqrt{C_{\mathrm{in}}} \varepsilon ^2,\quad \lVert  S_h \rVert_{\mathcal{G}^{s - 4}_2}\leq \sqrt{c} \varepsilon ^2 , \quad {\lVert  S_v \rVert_{\mathcal{G}^{ s - 4}_2}  \leq   \varepsilon ^2 \left(C_{\mathrm{in}}+ \lVert  \partial_t u^\varepsilon \rVert_{\mathcal{G}^{ s - 3}_2}\right)} 
    \] 
\end{lemma}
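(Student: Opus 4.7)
The plan is to combine the uniform-in-$\varepsilon$ bounds on $(u^\varepsilon, h^\varepsilon)$ and $(u^P, h^P)$ from Theorem~\ref{thm:Local well-posedness} and Remark~\ref{rmk:Estimates extend to system for u^p} (equivalently \eqref{eq:Smallness on Eu for convergence}--\eqref{eq:Smallness on Eh for convergence}) with the divergence-free relation $\partial_y v^\varepsilon = -\partial_x u^\varepsilon$ and the no-slip condition $v^\varepsilon|_{y=0,1}=0$.

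The bounds on $S_u$ and $S_h$ are essentially immediate. Since $\partial_{xx}$ loses two horizontal derivatives in the $\mathcal{G}^{s'}_2$ scale, one obtains
\[
\lVert S_u \rVert_{\mathcal{G}^{s-4}_2} \leq \varepsilon^2 \lVert u^P \rVert_{\mathcal{G}^{s-2}_2} \lesssim \sqrt{C_{\mathrm{in}}}\, \varepsilon^2
\]
from \eqref{eq:Smallness on Eu for convergence}, and similarly $\lVert S_h \rVert_{\mathcal{G}^{s-4}_2} \lesssim \varepsilon^2 \lVert h^P \rVert_{\mathcal{G}^{s-2}_2} \lesssim \sqrt{c}\, \varepsilon^2$ from \eqref{eq:Smallness on Eh for convergence}, using the inclusion $\mathcal{G}^{\sigma}_2 \hookrightarrow \mathcal{G}^{s-2}_2$ (valid since $s-2 < \sigma = s - \tfrac{3}{4}$).

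The estimate on $S_v$ is the only one requiring real care, since every occurrence of $v^\varepsilon$ costs one horizontal derivative through divergence-free. We bound its five summands separately. The term $\varepsilon^2 \partial_t v^\varepsilon$ is the sole source of the non-absorbable contribution in the statement: since $\partial_t v^\varepsilon$ vanishes on $y=0,1$, Poincaré in $y$ combined with $\partial_y \partial_t v^\varepsilon = -\partial_x \partial_t u^\varepsilon$ gives
\[
\lVert \partial_t v^\varepsilon \rVert_{\mathcal{G}^{s-4}_2} \lesssim \lVert \partial_x \partial_t u^\varepsilon \rVert_{\mathcal{G}^{s-4}_2} \lesssim \lVert \partial_t u^\varepsilon \rVert_{\mathcal{G}^{s-3}_2},
\]
which produces the second term in the estimate. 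For the transport terms $\varepsilon^2 u^\varepsilon \partial_x v^\varepsilon$ and $\varepsilon^2 v^\varepsilon \partial_y v^\varepsilon$, we apply the Gevrey--Hölder product inequality; the second uses $\partial_y v^\varepsilon = -\partial_x u^\varepsilon$ directly, while the first converts $\partial_x v^\varepsilon$ into a quantity controlled by $\lVert u^\varepsilon \rVert_{\mathcal{G}^{s-2}_2}$ via Poincaré and divergence-free. Combining with $\lVert u^\varepsilon \rVert_{\mathcal{G}^s_2}, \lVert \partial_y u^\varepsilon \rVert_{\mathcal{G}^{s-2}_2} \lesssim \sqrt{C_{\mathrm{in}}}$ yields an $\varepsilon^2 C_{\mathrm{in}}$ contribution. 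Finally, $\varepsilon^4 \partial_{xx} v^\varepsilon$ is bounded by $\varepsilon^4 \lVert u^\varepsilon \rVert_{\mathcal{G}^{s-1}_2}$, and $\varepsilon^2 \partial_{yy} v^\varepsilon$ by $\varepsilon^2 \lVert \partial_y u^\varepsilon \rVert_{\mathcal{G}^{s-3}_2}$ (using $\partial_{yy} v^\varepsilon = -\partial_x \partial_y u^\varepsilon$), both $\lesssim \varepsilon^2 \sqrt{C_{\mathrm{in}}}$.

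The only point requiring attention is derivative bookkeeping: one must check that the four-derivative gap between the norm index $s-4$ on the left-hand side and the indices $s$, $s-2$ at which $u^\varepsilon$ and $\partial_y u^\varepsilon$ are uniformly controlled is enough to absorb the one-derivative loss from each divergence-free substitution together with any extra $\partial_x$ or $\partial_y$. Since $s \geq 10$, this buffer is more than adequate, and no further machinery beyond the product inequality is needed.
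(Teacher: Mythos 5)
Your proof is correct and follows essentially the same route as the paper: the $S_u$ and $S_h$ bounds are read off directly from the uniform estimates \eqref{eq:Smallness on Eu for convergence}--\eqref{eq:Smallness on Eh for convergence}, and $S_v$ is handled term by term using Poincar\'e in $y$, the divergence-free substitution $\partial_y v^\varepsilon = -\partial_x u^\varepsilon$, and the Gevrey--H\"older product inequality, with $\varepsilon^2\partial_t v^\varepsilon$ isolated as the source of the $\lVert \partial_t u^\varepsilon \rVert_{\mathcal{G}^{s-3}_2}$ term. The derivative bookkeeping you flag checks out against the indices used in the paper's own computation.
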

\begin{proof} 
    For \(S_u\) and \(S_h\) the result is a direct consequence of \eqref{eq:Smallness on Eu for convergence} and \eqref{eq:Smallness on Eh for convergence}. Indeed, 
    \begin{align*}
          \lVert S_u \rVert_{\mathcal{G}^{s - 4}_2}  &\leq \varepsilon ^2 \lVert  \partial_{xx} u^P \rVert_{\mathcal{G}^{s - 4}_2}  \leq \varepsilon ^2 \lVert  u^P \rVert_{\mathcal{G}^{ s - 2}_2} \leq \varepsilon ^2 \sqrt{C_{\mathrm{in}}},   \\
          \lVert  S_h \rVert_{\mathcal{G}^{ s - 4}_2} &\leq \varepsilon ^2 \lVert  \partial_{xx} h^P \rVert_{\mathcal{G}^{ s - 4}_2} \leq \varepsilon ^2 \lVert  h^P \rVert_{\mathcal{G}^{ s - 2}_2} \leq  \varepsilon ^2\sqrt{c}.    \end{align*}
        For \(S_v\), we need an extra application of Proposition \ref{prop:Gevrey-Holder}:
          \begin{align*}
          \lVert  S_v \rVert_{\mathcal{G}^{s - 4}_2}  &\leq  \varepsilon ^2 \lVert  \partial_t v^\varepsilon \rVert_{\mathcal{G}^{ s- 4}_2}  + \varepsilon ^2 \lVert  (u^\varepsilon , v^\varepsilon) \cdot \nabla v^\varepsilon \rVert_{\mathcal{G}^{s - 4}_2}  +  \varepsilon ^2 \lVert  (\varepsilon ^2 \partial_{xx}, \partial_{yy}) v^\varepsilon \rVert_{\mathcal{G}^{s - 4}_2} \\
          &\leq \varepsilon ^2 \lVert  \partial_t u^\varepsilon \rVert_{\mathcal{G}^{s - 3}_2}  + \varepsilon ^2 \left( \lVert  u^\varepsilon \rVert_{\mathcal{G}^{s - 4}_\infty} \lVert  v^\varepsilon \rVert_{\mathcal{G}^{ s - 3}_2}  + \lVert  v^\varepsilon \rVert_{\mathcal{G}^{ s - 4}_\infty} \lVert  \partial_y v^\varepsilon \rVert_{\mathcal{G}^{ s - 4}_2}\right)  + \varepsilon ^2 \lVert  (\varepsilon ^2, \partial_y) u^\varepsilon \rVert_{\mathcal{G}^{ s - 1}_2} \\
          &\leq \varepsilon ^2 \lVert  \partial_t u^\varepsilon \rVert_{\mathcal{G}^{ s - 3}_2}  + C_{\mathrm{in}}\varepsilon ^2  + \sqrt{C_{\mathrm{in}}}\varepsilon ^2 . \qedhere
    \end{align*}   
\end{proof}  

For the forcing terms we have 
\begin{lemma}[Estimates for the forcing terms]\label{lem:Estimates for the forcing terms}
    For any \(t \leq T_0,  \) and \( \varepsilon \in [0,1]\), we have:
    \[  \lVert  F_u \rVert_{\mathcal{G}^{ s -4}_2} \lesssim \sqrt{c} \lVert  u^R \rVert_{\mathcal{G}^{s -4 }_2} + \sqrt{C_{\mathrm{in}}} \lVert  h^R \rVert_{\mathcal{G}^{s - 4}_2} + \sqrt{c \mathcal{E}_h}
    \]  
    and 
    \[   \lVert  F_v \rVert_{\mathcal{G}^{ s - 4}_2}\lesssim \sqrt{c} \lVert  v^R \rVert_{\mathcal{G}^{s -4 }_2} + \sqrt{C_{\mathrm{in}}} \lVert  h^R \rVert_{\mathcal{G}^{s - 4}_2} + \sqrt{c \mathcal{E}_h}
    \] 
\end{lemma}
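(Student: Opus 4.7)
The plan is to bilinearly decompose each difference using the telescoping identity
\[
a^\varepsilon b^\varepsilon - a^P b^P = a^R b^\varepsilon + a^P b^R,
\]
apply Proposition~\ref{prop:Gevrey-Holder} at the level $\mathcal{G}^{s-4}_2$ to each product, and then invoke the a priori bounds \eqref{eq:Smallness on Eu for convergence}--\eqref{eq:Smallness on Eh for convergence} together with the elliptic estimate of Proposition~\ref{prop:Elliptic Estimates}, which controls $\lVert f^R \rVert_{\mathcal{G}^{s-4}_2}$ by $\lVert \partial_t h^R \rVert_{\mathcal{G}^{\sigma-4}_2} \leq \sqrt{\mathcal{E}_h}$ through the Cauchy--Riemann-type system satisfied by $(e^R, f^R)$ with its boundary data. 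This is the only step where $\sqrt{\mathcal{E}_h}$ (rather than only $\sqrt{c}$ or $\sqrt{C_{\mathrm{in}}}$) enters the right-hand side.

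For the linear-in-$h$ contribution $f^\varepsilon h^\varepsilon - f^P h^P = f^R h^\varepsilon + f^P h^R$, Gevrey--H\"older gives
\[
\lVert f^R h^\varepsilon \rVert_{\mathcal{G}^{s-4}_2} \lesssim \lVert f^R \rVert_{\mathcal{G}^{s-4}_2}\,\lVert h^\varepsilon \rVert_{\mathcal{G}^{2}_\infty} + \lVert f^R \rVert_{\mathcal{G}^{2}_\infty}\,\lVert h^\varepsilon \rVert_{\mathcal{G}^{s-4}_2} \lesssim \sqrt{c\,\mathcal{E}_h},
\]
using $\lVert h^\varepsilon \rVert \lesssim \sqrt{c}$ from \eqref{eq:Smallness on Eh for convergence}, while the $f^P h^R$ piece is bounded by $\sqrt{C_{\mathrm{in}}}\,\lVert h^R \rVert_{\mathcal{G}^{s-4}_2}$ since both $\lVert f^P \rVert_{\mathcal{G}^{s-4}_2}$ and $\lVert f^P \rVert_{\mathcal{G}^{2}_\infty}$ are dominated by $\sqrt{C_{\mathrm{in}}}$ via \eqref{eq:Smallness on Eu for convergence} and the elliptic estimate. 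The decomposition $u^\varepsilon h^\varepsilon - u^P h^P = u^R h^\varepsilon + u^P h^R$ then produces exactly $\sqrt{c}\,\lVert u^R \rVert_{\mathcal{G}^{s-4}_2} + \sqrt{C_{\mathrm{in}}}\,\lVert h^R \rVert_{\mathcal{G}^{s-4}_2}$. The cubic term I would handle via one further telescoping,
\[
u^\varepsilon (h^\varepsilon)^2 - u^P (h^P)^2 = u^R (h^\varepsilon)^2 + u^P (h^\varepsilon + h^P)\,h^R,
\]
whose two pieces are bounded by $c\,\lVert u^R \rVert_{\mathcal{G}^{s-4}_2} \leq \sqrt{c}\,\lVert u^R \rVert_{\mathcal{G}^{s-4}_2}$ and by $\sqrt{C_{\mathrm{in}} c}\,\lVert h^R \rVert_{\mathcal{G}^{s-4}_2} \lesssim \sqrt{C_{\mathrm{in}}}\,\lVert h^R \rVert_{\mathcal{G}^{s-4}_2}$, both absorbed into the claimed bound using $c \ll 1$. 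The estimate for $F_v$ follows by the identical decomposition with $(v,e)$ replacing $(u,f)$; the velocity bounds $\lVert v^P \rVert_{\mathcal{G}^{s-4}_2},\,\lVert v^\varepsilon \rVert_{\mathcal{G}^{s-4}_2} \lesssim \sqrt{C_{\mathrm{in}}}$ are recovered from the divergence-free condition and Poincar\'e in $y$, exactly as in Lemma~\ref{lem:Smallness on u^L,h^L}.

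The main---and essentially the only---subtlety I anticipate is the index bookkeeping: one has to verify that the auxiliary low-regularity norms $\lVert h^\varepsilon \rVert_{\mathcal{G}^{2}_\infty}$, $\lVert h^P \rVert_{\mathcal{G}^{2}_\infty}$, and $\lVert f^R \rVert_{\mathcal{G}^{2}_\infty}$ entering through Proposition~\ref{prop:Gevrey-Holder} are genuinely dominated by the a priori bounds at their proper indices, and that the one derivative lost by the elliptic estimate still leaves $f^R$ controlled at level $s-4$ by $\partial_t h^R$ at level $\sigma-4 = s - 19/4$; all of this is comfortable when $s\geq 10$. No commutator argument, cancellation, or fine tuning of $\dot\lambda$ is needed---the proof is entirely structural, and the constants hidden in $\lesssim$ depend only on $\alpha$.
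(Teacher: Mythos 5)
Your proposal is correct, and it reaches the lemma with the same toolbox as the paper (Proposition~\ref{prop:Gevrey-Holder}, the elliptic estimate of Proposition~\ref{prop:Elliptic Estimates} to trade $(e^R,f^R)$ for $\partial_t h^R$, Poincar\'e in $y$, and the a priori bounds \eqref{eq:Smallness on Eu for convergence}--\eqref{eq:Smallness on Eh for convergence}), but via a different bookkeeping. The paper expands $a^\varepsilon b^\varepsilon - a^P b^P$ through $a^\varepsilon = a^P + a^R$, $b^\varepsilon = b^P + b^R$, splitting $F_u$ into a part $F_{u,L}$ linear in the $R$-quantities and a part $F_{u,N}$ quadratic or cubic in them; the nonlinear part then forces the extra step of bounding the surplus $R$-factors by $\lvert \Psi^R\rvert \le \lvert\Psi^\varepsilon\rvert + \lvert\Psi^P\rvert \lesssim \sqrt{C_{\mathrm{in}}}$. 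Your telescoping $a^R b^\varepsilon + a^P b^R$ makes every term exactly linear in one $R$-factor, so that step disappears -- a genuine simplification. Two small points of care. First, your claim that the $f^P h^R$ piece is bounded by $\sqrt{C_{\mathrm{in}}}\lVert h^R\rVert_{\mathcal{G}^{s-4}_2}$ alone is not quite right: the cross term $\lVert f^P\rVert_{\mathcal{G}^{s-4}_2}\lVert h^R\rVert_{\mathcal{G}^{2}_\infty}$ from Proposition~\ref{prop:Gevrey-Holder} is controlled by $\sqrt{c}\,\lVert \partial_y h^R\rVert_{\mathcal{G}^{2}_2}\lesssim \sqrt{c\,\mathcal{E}_h}$, not by $\lVert h^R\rVert_{\mathcal{G}^{s-4}_2}$; this still lands in the allowed right-hand side but should be charged to the $\sqrt{c\,\mathcal{E}_h}$ budget. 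Second, for $u^R h^\varepsilon$ you must place the $L^\infty_y$ factor on $h^\varepsilon$ in \emph{both} H\"older terms (i.e.\ take $q=\infty$, $q'=2$ so the second term reads $\lVert h^\varepsilon\rVert_{\mathcal{G}^{s-4}_\infty}\lVert u^R\rVert_{\mathcal{G}^{2}_2}$); the symmetric choice would produce $\lVert u^R\rVert_{\mathcal{G}^{2}_\infty}\lesssim \lVert\partial_y u^R\rVert_{\mathcal{G}^{2}_2}$, which does not appear on the right-hand side of the lemma. With those adjustments the argument closes exactly as claimed.
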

\begin{proof} 
    We will split the forcing terms in terms that are linear in \( (u^R, v^R, h^R)\) and those that are non-linear: 
    \begin{align*}
        F_{u,L} &:= \alpha \left(f^R h^P  + f^P h^R  - 2 u ^R h^P - 2u^P h^R  - u^R (h^P)^2 - 2 u ^P h^R h^P\right) , \\
        F_{u,N} & := \alpha \left( f^R h^R  - 2 u^R h^R - 2 u^R h^R h^P - u^R ( h^R)^2  - u^P (h^R)^2 \right) , \\
        F_{v, L} & : = - \alpha \left(e^R h^P  + e^P h^R  + 2 v ^R h^P + 2v^P h^R  + v^R (h^P)^2 + 2 v ^P h^R h^P\right) , \\
        F_{v,N} & :=-  \alpha \left( e^R h^R  + 2 v^R h^R + 2v^R h^R h^P + v^R ( h^R)^2  + v^P (h^R)^2 \right) . 
    \end{align*} 
    Via the estimates on \( (u^P , v^P , h^P)\) and Proposition \ref{prop:Elliptic Estimates} we obtain for the linear terms: 
    \begin{align*}
        \lVert  F_{u,L} \rVert_{\mathcal{G}^{s - 4}_2} &\lesssim  \lVert    f^R \rVert_{\mathcal{G}^{s - 4}_2} \lVert \partial_y   h^P \rVert_{\mathcal{G}^{s - 4}_2} + \lVert \partial_y  f^P \rVert_{\mathcal{G}^{s - 4}_2} \lVert   h^R \rVert_{\mathcal{G}^{s - 4}_2} + \lVert   u^R \rVert_{\mathcal{G}^{s - 4}_2}  \lVert \partial_y  h^P \rVert_{\mathcal{G}^{s - 4}_2}  \\
        &\quad+ \lVert u^R \rVert_{\mathcal{G}^{s - 4}_2} \lVert  \partial_y h^P \rVert_{\mathcal{G}^{s- 4}_2}^2 + \lVert \partial_y u^P \rVert_{\mathcal{G}^{s - 4}_2} \lVert   h^P \rVert_{\mathcal{G}^{s - 4}_2} \lVert h^R \rVert_{\mathcal{G}^{s - 4}_2} \\
        &\lesssim \sqrt{c} \lVert  \partial _t h^R \rVert_{\mathcal{G}^{\sigma - 4}_2} + \sqrt{c} \lVert   h^R \rVert_{\mathcal{G}^{s - 4}_2} + \sqrt{c} \lVert  u^R \rVert_{\mathcal{G}^{s - 4}_2} + c \lVert  u^R \rVert_{\mathcal{G}^{s - 4}_2} + \sqrt{C_{\mathrm{in}}c} \lVert  h^R \rVert_{\mathcal{G}^{s - 4}_2}, \\
        &\lesssim  \sqrt{c} \lVert  u^R \rVert_{\mathcal{G}^{s - 4}_2} + \sqrt{C_{\mathrm{in}} c} \lVert  h^R \rVert_{\mathcal{G}^{s -4}_2} + \sqrt{c \mathcal{E}_h}.  
    \end{align*} 
    and similarly,  
   \begin{align*}
       \lVert  F_{v,L} \rVert_{\mathcal{G}^{s - 4}_2} &\lesssim  \lVert  e^R \rVert_{\mathcal{G}^{s - 4}_2} \lVert \partial_y   h^P \rVert_{\mathcal{G}^{s - 4}_2} + \lVert \partial_y  e^P \rVert_{\mathcal{G}^{s - 4}_2} \lVert h^R \rVert_{\mathcal{G}^{s - 4}_2} + \lVert  v^R \rVert_{\mathcal{G}^{s - 4}_2} \lVert  \partial_y h^P \rVert_{\mathcal{G}^{s - 4}_2}  \\
       & \quad+ \lVert  \partial_y v^P \rVert_{\mathcal{G}^{s - 4}_2} \lVert  h^R \rVert_{\mathcal{G}^{s - 4}_2} + \lVert  v^R \rVert_{\mathcal{G}^{s - 4}_2} \lVert  \partial_y h^P \rVert_{\mathcal{G}^{s - 4}_2}^2  + \lVert u^P \rVert_{\mathcal{G}^{s - 3}_2} \lVert  h^P \rVert_{\mathcal{G}^{s - 4}_2} \lVert  h^R \rVert_{\mathcal{G}^{s - 4}_2}, \\
       &\lesssim  \sqrt{c} \lVert v^R\rVert_{\mathcal{G}^{s - 4}_2} + \sqrt{C_{\mathrm{in}} c} \lVert  h^R \rVert_{\mathcal{G}^{s -4}_2} + \sqrt{c \mathcal{E}_h}.  
   \end{align*}  
    For the non-linear terms, the idea is to split the extra \( \lvert \Psi^R \rvert\) term into \( \lvert \Psi^\varepsilon \rvert +  \lvert \Psi \rvert\) for \( \Psi \in \{u,v,f,h,e\}\) which is atmost \(\sqrt{C_{\mathrm{in}}}\) in size as per \eqref{eq:Smallness on Eu for convergence} and \eqref{eq:Smallness on Eh for convergence}. Indeed, using Proposition \ref{prop:Gevrey-Holder}, we get 
    \begin{align*}
        \lVert  F_{u,N} \rVert_{\mathcal{G}^{ s- 4}_2}& \lesssim  \alpha \lVert  h^R \rVert_{\mathcal{G}^{ s - 4}_2} \left( \lVert  f^R \rVert_{\mathcal{G}^{2}_\infty} + \lVert  u^R \rVert_{\mathcal{G}^{2}_\infty} + \lVert  u^R \rVert_{\mathcal{G}^{2}_\infty} \lVert  h^R \rVert_{\mathcal{G}^{2}_\infty} + \lVert  h^R \rVert_{\mathcal{G}^{2}_\infty}\right) \\
        &\qquad\qquad + \alpha \lVert  h^R \rVert_{\mathcal{G}^{2}_\infty}\left( \lVert  f^R \rVert_{\mathcal{G}^{s-4}_2} + \lVert  u^R \rVert_{\mathcal{G}^{s-4}_2} + \lVert u^R \rVert_{\mathcal{G}^{s-4}_2} \lVert  h^R \rVert_{\mathcal{G}^{2}_\infty} + \lVert  h^R \rVert_{\mathcal{G}^{s-4}_2} \right) \\
        &\lesssim \alpha \lVert  h^R \rVert_{\mathcal{G}^{s - 4}_2} \left( \lVert  \partial_t h^R \rVert_{\mathcal{G}^{2}_2} + \lVert  \partial_y u^R \rVert_{\mathcal{G}^{2}_2} + \lVert  \partial_y u^R \rVert_{\mathcal{G}^{2}_2} \lVert  \partial_y h^R \rVert_{\mathcal{G}^{2}_2} + \lVert  \partial_y h^R \rVert_{\mathcal{G}^{2}_2}\right)\\
        &\qquad\qquad + \alpha \lVert  \partial_y h^R \rVert_{\mathcal{G}^{2}_2} \left( \lVert  \partial_t h^R \rVert_{\mathcal{G}^{ s - 5}_2} + \lVert  u^R \rVert_{\mathcal{G}^{ s- 4}_2} + \lVert  u^R \rVert_{\mathcal{G}^{ s - 4}_2} \lVert  \partial_y h^R \rVert_{\mathcal{G}^{2}_2} + \lVert  h^R \rVert_{\mathcal{G}^{ s - 4}_2} \right)  \\
        &\lesssim \sqrt{C_{\mathrm{in}}} \lVert  h^R \rVert_{\mathcal{G}^{s - 4}_2} + \sqrt{c} \lVert  \partial_t h^R \rVert_{\mathcal{G}^{\sigma - 4}_2} + \sqrt{c}\lVert  u^R \rVert_{\mathcal{G}^{s - 4}_2} \\ 
        &\lesssim  \sqrt{c} \lVert  u^R \rVert_{\mathcal{G}^{s - 4}_2} + \sqrt{C_{\mathrm{in}}} \lVert  h^R \rVert_{\mathcal{G}^{s - 4}_2} + \sqrt{c \mathcal{E}_h}. \\
        \shortintertext{Similarly, the same strategy gives the estimate for \(  \lvert F_{v,N} \rvert:\)}
        \lVert  F_{v, N} \rVert_{\mathcal{G}^{s-4}_2} &\lesssim  \alpha \lVert  h^R \rVert_{\mathcal{G}^{ s - 4}_2} \left( \lVert  e^R \rVert_{\mathcal{G}^{2}_\infty} + \lVert  v^R \rVert_{\mathcal{G}^{2}_\infty} + \lVert  v^R \rVert_{\mathcal{G}^{2}_\infty} \lVert  h^R \rVert_{\mathcal{G}^{2}_\infty} + \lVert  h^R \rVert_{\mathcal{G}^{2}_\infty}\right) \\ 
        &\qquad\qquad + \alpha \lVert  h^R \rVert_{\mathcal{G}^{2}_\infty}\left( \lVert  e^R \rVert_{\mathcal{G}^{s-4}_2} + \lVert  v^R \rVert_{\mathcal{G}^{s-4}_2} + \lVert v^R \rVert_{\mathcal{G}^{s-4}_2} \lVert  h^R \rVert_{\mathcal{G}^{2}_\infty} + \lVert  h^R \rVert_{\mathcal{G}^{s-4}_2} \right) \\
        &\lesssim \alpha \lVert  h^R \rVert_{\mathcal{G}^{s - 4}_2} \left( \lVert  \partial_t h^R \rVert_{\mathcal{G}^{2}_2} + \lVert  u^R \rVert_{\mathcal{G}^{3}_2} + \lVert    u^R \rVert_{\mathcal{G}^{3}_2} \lVert  \partial_y h^R \rVert_{\mathcal{G}^{2}_2} + \lVert  \partial_y h^R \rVert_{\mathcal{G}^{2}_2}\right)\\
        &\qquad\qquad + \alpha \lVert  \partial_y h^R \rVert_{\mathcal{G}^{2}_2} \left( \lVert  \partial_t h^R \rVert_{\mathcal{G}^{ s - 5}_2} + \lVert  v^R \rVert_{\mathcal{G}^{ s- 4}_2} + \lVert  v^R \rVert_{\mathcal{G}^{ s - 4}_2} \lVert  \partial_y h^R \rVert_{\mathcal{G}^{2}_2} + \lVert  h^R \rVert_{\mathcal{G}^{ s - 4}_2} \right)  \\
        &\lesssim \sqrt{C_{\mathrm{in}}} \lVert  h^R \rVert_{\mathcal{G}^{s - 4}_2} + \sqrt{c} \lVert  \partial_t h^R \rVert_{\mathcal{G}^{\sigma - 4}_2} + \sqrt{c}\lVert  v^R \rVert_{\mathcal{G}^{s - 4}_2} \\ 
        &\lesssim  \sqrt{c} \lVert  v^R \rVert_{\mathcal{G}^{s - 4}_2} + \sqrt{C_{\mathrm{in}}} \lVert  h^R \rVert_{\mathcal{G}^{s - 4}_2} + \sqrt{c \mathcal{E}_h}. \qedhere
    \end{align*}  
\end{proof}  
\subsection{The convergence process}
Hitting the first two equations of the system \eqref{eq:Equations for u^R} by \( (\mathrm{A}_{s - 4}^2 u^R, \mathrm{A}_{s - 4}^2 v^R)\) and summing, we obtain 
\begin{align}\label{eq:EnergyEquation for u^R}
    \begin{split} 
    &\frac{1}{2}\frac{\mathrm{d}}{\mathrm{d}t} \lVert  u^R \rVert_{\mathcal{G}^{ s - 4}_2} ^2 + \dot{\lambda} \lVert  u^R \rVert_{\mathcal{G}^{s - 4 + \frac{1}{4}}_2}^2   + \lVert  (\varepsilon \partial_x, \partial_y) u^R \rVert_{\mathcal{G}^{ s - 4}_2}^2 + \alpha \lVert (u^R, v^R) \rVert_{\mathcal{G}^{ s - 4}_2}^2\\
    &\qquad\qquad = \langle  \mathrm{A}_{s - 4} (F_u - N_u - S_u ), \mathrm{A}_{s - 4}  u^R  \rangle + \langle  \mathrm{A}_{s - 4} ( F_v - S_v), \mathrm{A}_{s - 4}  v^R \rangle, 
    \end{split}
\end{align} 
 
\noindent where we have used the divergence-free condition of \((u^R, v^R)\) to get rid of \( (p^R, f^R, e^R)\). Also hitting the equation for \(h^R\) by \( \mathrm{A}_{\sigma - 4}^2 \partial_t h^R\), we get  
\begin{equation}\label{eq:EnergyEquation for h^R-Int1}
    \frac{1}{2} \frac{\mathrm{d}}{\mathrm{d}t} \lVert  (\beta \partial_t, \gamma \varepsilon \partial_x, \gamma \partial_y) h^R \rVert_{\mathcal{G}^{ \sigma- 4}_2}^2 +\beta  \dot{\lambda} \lVert \partial_t  h^R \rVert_{\mathcal{G}^{ \sigma - 4 + \frac{1}{4}}_2}^2  + \lVert  \partial_t h^R \rVert_{\mathcal{G}^{\sigma - 4}_2}^2   = - \langle  \mathrm{A}_{ \sigma- 4}(S_h + N_h) , \mathrm{A}_{ \sigma- 4} \partial_t h^R \rangle
\end{equation} 
where we further note that, per Lemma \ref{lem:|∂ₜh|ₛ expansion}, 
\[ \dot{\lambda} \lVert  \partial_t h \rVert_{\mathcal{G}^{\sigma - 4 + \frac{1}{4}}_2 }^2  = \dot{\lambda} \lVert  \partial_t [ \mathrm{A}_{\sigma - 4 + \frac{1}{4}} h] \rVert_{L^2}^2 + \dot{\lambda}^3 \lVert  h \rVert_{\mathcal{G}^{\sigma -4  + \frac{3}{4}}_2}^2  + \frac{\mathrm{d}}{\mathrm{d}t} \left(\dot{\lambda}^2 \lVert  h \rVert_{\mathcal{G}^{\sigma - 4 + \frac{1}{2}}_2}^2\right) - 2 \ddot{\lambda} \dot{\lambda} \lVert  h \rVert_{\mathcal{G}^{\sigma - 4  + \frac{1}{2}}_2}^2, 
\] 
to write \eqref{eq:EnergyEquation for h^R-Int1} as  
\[  \frac{\mathrm{d}}{\mathrm{d}t} \left( \lVert (\sqrt{\beta} \partial_t , \sqrt{\gamma} \varepsilon \partial_x, \sqrt{\gamma}\partial_y )h^R \rVert_{\mathcal{G}^{\sigma - 4}_2}^2 +\beta \dot{\lambda}^2 \lVert  h ^R\rVert_{\mathcal{G}^{\sigma - 4 + \frac{1}{2}}_2}^2\right) +  \dot{\lambda} \lVert  (\sqrt{\beta} \partial_t , \sqrt{2\gamma} \varepsilon \partial_x,\sqrt{2\gamma} \partial_y )h^R \rVert_{\mathcal{G}^{\sigma - 4 + \frac{1}{4}}_2}^2 
\] 
\begin{equation}\label{eq:EnergyEquation for h^R}
    \quad\qquad\qquad+ \beta \dot{\lambda} \lVert  \partial_t [ \mathrm{A}_{\sigma - 4 + \frac{1}{4}} h] \rVert_{L^2}^2 + \beta \dot{\lambda} ^3 \lVert  h \rVert_{\mathcal{G}^{s - 4 }_2}^2  + 2\lVert  \partial_t h^R \rVert_{\mathcal{G}^{\sigma - 4}_2}^2 \qquad\qquad\qquad\qquad
\end{equation} 
\[ \qquad\qquad = 2 \ddot{\lambda} \dot{\lambda} \lVert  h \rVert_{\mathcal{G}^{\sigma - 4 + \frac{1}{2}}_2}^2  - 2\left\langle  \mathrm{A}_{\sigma - 4}( S_h + N_h) , \mathrm{A}_{\sigma - 4} (\partial_t h^R) \right\rangle. 
\]    
{Hitting the equation for \( (u,v)\) by \( (\mathrm{A}_{ s - 6} ^2 (\partial_t u^R), \mathrm{A}_{ s - 6} ^2 (\partial_t v^R))\) and summing, we get }   
\begin{align}\label{eq:EnergyEquation for ∂ₜu^R}
    \begin{split} 
        \lVert  \partial_t u^R \rVert_{\mathcal{G}^{ s - 6}_2}^2 &+ \frac{1}{2} \frac{\mathrm{d}}{\mathrm{d}t} \left(\lVert  (\alpha, \varepsilon \partial_x, \partial_y ) u^R \rVert_{\mathcal{G}^{ s - 6}_2}^2 + \lVert \alpha v^R \rVert_{\mathcal{G}^{s - 6}_2}^2 \right) + \dot{\lambda} \lVert  (\alpha ,\varepsilon \partial_x, \partial_y) u^R \rVert_{\mathcal{G}^{\sigma - 5}_2}  + \dot{\lambda} \lVert  \alpha v^R \rVert_{\mathcal{G}^{\sigma - 5}_2}^2  \\
        & = \langle  \mathrm{A}_{ s- 6} ( F_u- N_u - S_u) , \mathrm{A}_{ s- 6} [ \partial_t u^R] \rangle + \langle \mathrm{A}_{s - 6} (F_v - S_v), \mathrm{A}_{s - 6}(\partial_t v^R )\rangle. 
    \end{split}
\end{align} 
 
The quantities that we need to control are the nonlinear \(N_u\) and \(N_h\) terms. To that end, using Proposition \ref{prop:Gevrey-Holder}, Proposition \ref{prop:Estimate on〈Aₛ(g∇ϕ),Aₛϕ〉}, we get  
\begin{align}\label{eq:Estimate for Nu,u}
    \begin{split} 
        &\lvert  \langle \mathrm{A}_{s - 4} N_u , \mathrm{A}_{s - 4} u^R \rangle \rvert   \leq   \lvert  \langle  \mathrm{A}_{s - 4}(u^\varepsilon \partial_x u^R) , \mathrm{A}_{s - 4} u^R \rangle \rvert +  \lvert  \langle \mathrm{A}_{s - 4} (v^\varepsilon \partial_y u^R) , \mathrm{A}_{s - 4} u^R \rangle \rvert  \\
        &\qquad \qquad\qquad\qquad\qquad\qquad +  \lvert  \langle  \mathrm{A}_{s - 4} (u^R \partial_x u^P) , \mathrm{A}_{s - 4} u^R \rangle \rvert +  \lvert  \langle  \mathrm{A}_{s- 4} ( v^R \partial_y u^P) , \mathrm{A}_{s - 4} u^R \rangle \rvert \\ 
     &\lesssim  \lVert  \partial_y u^\varepsilon \rVert_{\mathcal{G}^{2}_2} \lVert   u^R \rVert_{\mathcal{G}^{s -4 + \frac{1}{4}}_2}^2 + \lVert  u^\varepsilon \rVert_{\mathcal{G}^{s - 4}_2} \lVert  u^R \rVert_{\mathcal{G}^{s}_2} \lVert  \partial_y u^R \rVert_{\mathcal{G}^{2}_2} \\
     &\qquad + \lVert  u^\varepsilon \rVert_{\mathcal{G}^{3}_2} \lVert  \partial_y u^R \rVert_{\mathcal{G}^{s - 4}_2} \lVert  u^R \rVert_{\mathcal{G}^{s - 4}_2} + \lVert  u^\varepsilon \rVert_{\mathcal{G}^{s}_2} \lVert  \partial_y u^R \rVert_{\mathcal{G}^{2}_2} \lVert  \partial_y u^R \rVert_{\mathcal{G}^{s - 4}_2} + \lVert  \partial_y u^\varepsilon \rVert_{\mathcal{G}^{s - 2}_2} \lVert  u^R \rVert_{\mathcal{G}^{s - 4}_2}^2 \\
      & \,\qquad + \lVert  u^R \rVert_{\mathcal{G}^{s - 4}_2}^2 \lVert \partial_y u^P \rVert_{\mathcal{G}^{2}_2} + \lVert  u^R \rVert_{\mathcal{G}^{s - 4}_2} \lVert  u^P \rVert_{\mathcal{G}^{s - 4}_2} \lVert  \partial_y u^R \rVert_{\mathcal{G}^{2}_2} \\
      &\, \qquad + \lVert  u^R \rVert_{\mathcal{G}^{s - 4}_2}^{\frac{1}{2}} \lVert  \partial_y u^R \rVert_{\mathcal{G}^{s - 4}_2}^{\frac{1}{2}} \lVert  v^R \rVert_{\mathcal{G}^{s - 4}_2} \lVert  \partial_y u^P \rVert_{\mathcal{G}^{2}_2} + \lVert  u^R \rVert_{\mathcal{G}^{s - 4}_2} \lVert  u^R \rVert_{\mathcal{G}^{3}_2} \lVert  \partial_y u^P \rVert_{\mathcal{G}^{s - 4}_2}.  \\
     &\lesssim  \sqrt{C_{\mathrm{in}}}\left( \lVert   u^R\rVert_{\mathcal{G}^{s -4+ \frac{1}{4}}_2}^2  +    \lVert  u^R \rVert_{\mathcal{G}^{s-4}_2} \lVert  \partial_y u^R \rVert_{\mathcal{G}^{s-4}_2}  \right) \\
     & \qquad + \sqrt{C_{\mathrm{in}}}\left( \lVert  \partial_y u^R \rVert_{\mathcal{G}^{s - 4}_2} \lVert  \partial_y u^R \rVert_{\mathcal{G}^{2}_2} + \lVert  u^R \rVert_{\mathcal{G}^{s - 4}_2}^{\frac{1}{2}} \lVert  \partial_y u^R \rVert_{\mathcal{G}^{s - 4}_2}^{\frac{1}{2}}  \lVert  v^R \rVert_{\mathcal{G}^{s - 4}_2}   \right) \\
     &\lesssim  \sqrt{C_{\mathrm{in}}} \lVert  u^R \rVert_{\mathcal{G}^{s - 4 + \frac{1}{4}}_2}^2  + \sqrt{C_{\mathrm{in}}}\lVert  \partial_y u^R \rVert_{\mathcal{G}^{s - 4}_2} \left(\sqrt{  \mathcal{E}_u} + \sqrt{ \mathrm{E}_u}   \right) +\sqrt{C_{\mathrm{in}}} \lVert  u^R \rVert_{\mathcal{G}^{s - 4}_2}^{\frac{1}{2}} \lVert  \partial_y u^R \rVert_{\mathcal{G}^{s - 4}_2}^{\frac{1}{2}}  \lVert  v^R \rVert_{\mathcal{G}^{s - 4}_2}. 
    \end{split}
\end{align} 
Here note that we couldn't get rid of the nonlinear behavior using \( \lvert \Psi^R \rvert \leq  \lvert\Psi^\varepsilon \rvert +  \lvert \Psi^L \rvert \lesssim \sqrt{C_{\mathrm{in}}}\) like we did in Lemma \ref{lem:Estimates for the forcing terms}. This is because, unlike for the forcing terms \( \lvert F_{u,N} \rvert\) and \( F_{v,N}\), the velocities do not come coupled with terms we expect to be small like \(h^R\). Here \(u^R\) is coupled with terms like \( \partial_x u^R\) so such terms are neither directly controllable by energy nor are they small. Regardless, precise estimates akin to those in the proof of Theorem \ref{thm:Local well-posedness} suffices here as shown above. 
\begin{align}\label{eq:Estimate for Nh,h}
    \begin{split}  
      &\lvert  \langle  \mathrm{A}_{s - 4} N_h , \mathrm{A}_{s - 4} \partial_t h^R \rangle \rvert \\
      &\leq  \lvert  \langle \mathrm{A}_{\sigma - 4} (u^\varepsilon \partial_x h^R) , \mathrm{A}_{\sigma - 4} \partial_t h^R \rangle \rvert + \lvert  \langle \mathrm{A}_{\sigma - 4} (v^\varepsilon \partial_y h^R) , \mathrm{A}_{\sigma - 4} \partial_t h^R \rangle \rvert  \\
      &\qquad +  \lvert  \langle  \mathrm{A}_{\sigma - 4}( u^R \partial_x h^P) , \mathrm{A}_{\sigma - 4} \partial_t h^R  \rangle \rvert +  \lvert  \langle  \mathrm{A}_{\sigma - 4}( v^R \partial_y h^P ), \mathrm{A}_{\sigma - 4} \partial_t h^R \rangle \rvert \\
      &\lesssim  \lVert  \partial_t h^R \rVert_{\mathcal{G}^{\sigma  - 4 + \frac{1}{4}}_2} \lVert  \partial_y u^\varepsilon \rVert_{\mathcal{G}^{s - 2}_2} \lVert  h^R  \rVert_{\mathcal{G}^{s - 4}_2}   + \lVert  \partial_t h^R \rVert_{\mathcal{G}^{\sigma - 4}_2} \lVert  u^\varepsilon \rVert_{\mathcal{G}^{s -2}_2} \lVert  \partial_y h^R \rVert_{\mathcal{G}^{\sigma - 4}_2}  \\
      & \qquad\qquad + \lVert  \partial_t h^R \rVert_{\mathcal{G}^{\sigma - 4}_2} \lVert  u^R \rVert_{\mathcal{G}^{\sigma - 4}_2} \lVert  \partial_y h^P \rVert_{\mathcal{G}^{\sigma - 2}_2} + \lVert  \partial_th^R \rVert_{\mathcal{G}^{\sigma - 4}_2}  \lVert  u^R \rVert_{\mathcal{G}^{s - 4 + \frac{1}{4}}_2} \lVert  \partial_y h^P \rVert_{\mathcal{G}^{\sigma - 4}_2}\\
      &\lesssim \sqrt{C_{\mathrm{in}}} \lVert  \partial_t h^R \rVert_{\mathcal{G}^{\sigma - 4 + \frac{1}{4}}_2}  \lVert  h^R \rVert_{\mathcal{G}^{s - 4}_2} + \sqrt{C_{\mathrm{in}}\mathcal{E}_h} \left( \lVert  \partial_y h^R \rVert_{\mathcal{G}^{\sigma - 4}_2} + \sqrt{\mathcal{E}_u}  + \lVert  u^R \rVert_{\mathcal{G}^{s - 4 + \frac{1}{4}}_2 } \right) 
    \end{split}
\end{align}  
\begin{equation}\label{eq:Estimate for Nu,par_t u^R}
    \lvert \langle  \mathrm{A}_{s - 6} N_u , \mathrm{A}_{s - 6} \partial_t u^R \rangle \rvert\leq \lVert  \partial_t u^R \rVert_{\mathcal{G}^{s - 6}_2}  \sqrt{C_{\mathrm{in}}} \lVert  u^R \rVert_{\mathcal{G}^{s - 5}_2}  \leq \lVert  \partial_t u^R \rVert_{\mathcal{G}^{s - 6}_2} \sqrt{C_{\mathrm{in}} \mathcal{E}_u}.      \qquad\qquad\qquad\quad   \, 
\end{equation} 
Then using Lemma \ref{lem:Estimates for the small terms}, Lemma \ref{lem:Estimates for the forcing terms} and \eqref{eq:Estimate for Nu,u}, we can control the left-hand side of \eqref{eq:EnergyEquation for u^R} as 
\begin{align*}
    \frac{1}{2}\frac{\mathrm{d}}{\mathrm{d}t} &\lVert  u^R \rVert_{\mathcal{G}^{s - 4}_2}^2 + \dot{\lambda} \lVert  u^R \rVert_{\mathcal{G}^{s - 4 + \frac{1}{4}}_2}^2 +  \lVert  (\varepsilon \partial_x, \partial_y ) u^R \rVert_{\mathcal{G}^{s - 4}_2}^2  + \alpha\lVert  (u^R, v^R) \rVert_{\mathcal{G}^{s- 4}_2}^2  \\
    & \lesssim  \lVert  u^R \rVert_{\mathcal{G}^{s - 4}_2}  \left( \sqrt{c}\lVert u^R \rVert_{\mathcal{G}^{s - 4}_2} +\sqrt{c \mathcal{E}_h} + \sqrt{C_{\mathrm{in}}} \lVert  h^R \rVert_{\mathcal{G}^{s - 4}_2}  \right) + \sqrt{C_{\mathrm{in}}} \lVert  u^R \rVert_{\mathcal{G}^{s - 4 + \frac{1}{4}}_2}^2  \\
    & \qquad + \sqrt{C_{\mathrm{in}}}\lVert  \partial_y u^R \rVert_{\mathcal{G}^{s - 4}_2} \left(\sqrt{  \mathcal{E}_u} + \sqrt{ \mathrm{E}_u}   \right) + \sqrt{C_{\mathrm{in}}} \lVert  u^R \rVert_{\mathcal{G}^{s - 4}_2}^{\frac{1}{2}} \lVert  \partial_y u^R \rVert_{\mathcal{G}^{s - 4}_2}^{\frac{1}{2}}  \lVert  v^R \rVert_{\mathcal{G}^{s - 4}_2}\\
    & \qquad + \lVert v^R \rVert_{\mathcal{G}^{s - 4}_2}  \left( \sqrt{c}\lVert  v^R\rVert_{\mathcal{G}^{s - 4}_2} + \sqrt{C_{\mathrm{in}}} \lVert  h^R \rVert_{\mathcal{G}^{s - 4}_2} + \sqrt{c\mathcal{E}_h} + \varepsilon ^2 \sqrt{C_{\mathrm{in}}} + \varepsilon ^2 \lVert  \partial_t u^\varepsilon \rVert_{\mathcal{G}^{s - 3}_2}\right).
\end{align*} 
Then using the smallness of \(\varepsilon ^2\) and \(\sqrt{c}\) and \(\dot{\lambda} \geq 32 C C_{\mathrm{in}}\) with Young's inequality, we get 
\begin{align}\label{eq:EnergyInequality for u^R}
    \begin{split} 
        \frac{\mathrm{d}}{\mathrm{d}t} \mathcal{E}_u +\lVert  \partial_y u^R \rVert_{\mathcal{G}^{s - 4}_2}^2 +  &\dot{\lambda} \lVert  u^R \rVert_{\mathcal{G}^{s - 4 + \frac{1}{4}}_2}^2  + \mathcal{D}_u  - \frac{1}{32} \lVert  \partial_y u^R \rVert_{\mathcal{G}^{s - 4}_2}^2  \\
        & \lesssim  C_{\mathrm{in}} \left(\mathcal{E}_h  + \mathcal{E}_u + \mathrm{E}_u  +   \lVert  h^R \rVert_{\mathcal{G}^{s - 4}_2}^2 \right) + \varepsilon ^4 \lVert  \partial_t u^\varepsilon \rVert_{\mathcal{G}^{s - 3}_2}^2. 
    \end{split}
\end{align}  
The equation \eqref{eq:EnergyEquation for h^R} for \(h^R\) can be controlled similarly using Lemma \ref{lem:Estimates for the small terms}, Lemma \ref{lem:Estimates for the forcing terms} and \eqref{eq:Estimate for Nh,h}
\begin{align*}
    &\frac{\mathrm{d}}{\mathrm{d}t} \left( \lVert ( \sqrt{\beta}\partial_t , \sqrt{\gamma} \varepsilon \partial_x,\sqrt{\gamma} \partial_y )h^R \rVert_{\mathcal{G}^{\sigma - 4}_2}^2 +\beta \dot{\lambda}^2 \lVert  h \rVert_{\mathcal{G}^{\sigma - 4 + \frac{1}{2}}_2}^2\right) +  \dot{\lambda} \lVert  (\sqrt{\beta} \partial_t ,\sqrt{2 \gamma} \varepsilon \partial_x,\sqrt{2 \gamma} \partial_y )h^R \rVert_{\mathcal{G}^{\sigma - 4 + \frac{1}{4}}_2}^2  \\
     &\quad\qquad\qquad+ \beta \dot{\lambda} \lVert  \partial_t [ \mathrm{A}_{\sigma - 4 + \frac{1}{4}} h^R] \rVert_{L^2}^2 + \beta \dot{\lambda} ^3 \lVert  h^R \rVert_{\mathcal{G}^{s - 4 }_2}^2  + 2\lVert  \partial_t h^R \rVert_{\mathcal{G}^{\sigma - 4}_2}^2  \\
     & \qquad  \leq 0  - \left\langle  \mathrm{A}_{\sigma - 4}( S_h + N_h) , \mathrm{A}_{\sigma - 4} (\partial_t h^R) \right\rangle \\
     &\qquad   \lesssim \sqrt{C_{\mathrm{in}}} \lVert  \partial_t h^R \rVert_{\mathcal{G}^{\sigma - 4 + \frac{1}{4}}_2}  \lVert  h^R \rVert_{\mathcal{G}^{s - 4}_2} + \sqrt{C_{\mathrm{in}}\mathcal{E}_h} \left( \lVert  \partial_y h^R \rVert_{\mathcal{G}^{\sigma - 4}_2} + \sqrt{\mathcal{E}_u}  + \lVert  u^R \rVert_{\mathcal{G}^{s - 4 + \frac{1}{4}}_2 } + \sqrt{c}\varepsilon ^2 \right)
\end{align*} 
where we have used the property \(\ddot{\lambda} \leq 0 \leq \dot{\lambda}\) to bound \(2 \ddot{\lambda} \dot{\lambda} \lVert h \rVert_{\mathcal{G}^{\sigma - 4 + \frac{1}{2}}_2}^2 \leq 0\). Thus using \(\dot{\lambda} \geq 32 C C_{\mathrm{in}}:\) 
\begin{align}\label{eq:EnergyInequality for h^R}
    \begin{split} 
         \frac{\mathrm{d}}{\mathrm{d}t} \mathcal{E}_h& + \dot{\lambda} \lVert  (  \partial_t ,  \partial_y) h^R \rVert_{\mathcal{G}^{\sigma - 4  
        + \frac{1}{4}}_2}^2  +  \dot{\lambda} ^3 \lVert  h^R \rVert_{\mathcal{G}^{s - 4 }_2}^2  - \frac{1}{2} \lVert  u^R \rVert_{\mathcal{G}^{s - 4 + \frac{1}{4}}_2}^2  \lesssim  C_{\mathrm{in}}(\mathcal{E}_h + \mathcal{E}_u ) + \varepsilon ^2 \sqrt{C_{\mathrm{in}} \mathcal{E}_h}. 
    \end{split}
\end{align} 
Finally for \(\partial_t u^R\), we use Lemma \ref{lem:Estimates for the small terms}, Lemma \ref{lem:Estimates for the forcing terms} and \eqref{eq:Estimate for Nu,par_t u^R}, we obtain 
\begin{align*}
    \lVert  \partial_t u^R &\rVert_{\mathcal{G}^{s - 6}_2}^2 + \frac{1}{2} \frac{\mathrm{d}}{\mathrm{d}t} \lVert  (\alpha u^R, \partial_y u^R, \alpha v^R )\rVert_{\mathcal{G}^{s - 6}_2}^2  + \dot{\lambda} \lVert (\alpha u^R, \partial_y u^R, \alpha v^R ) \rVert_{\mathcal{G}^{\sigma - 5}_2}^2\\
    &\lesssim  \lVert  \partial_t u^R \rVert_{\mathcal{G}^{s - 6}_2} \left(  \sqrt{C_{\mathrm{in}}} \lVert (u^R, h^R)\rVert_{\mathcal{G}^{s - 6}_2} + \sqrt{c \mathcal{E}_h} + \sqrt{C_{\mathrm{in}}}\varepsilon ^2\right), \\
    & \qquad + \lVert  \partial_t v^R \rVert_{\mathcal{G}^{s - 7}_2} \left( \sqrt{c} \lVert  v^R \rVert_{\mathcal{G}^{s - 5}_2} + \sqrt{C_{\mathrm{in}}}\lVert  h^R \rVert_{\mathcal{G}^{s - 5}_2} + \sqrt{c \mathcal{E}_h} + C_{\mathrm{in}} \varepsilon ^2 + \varepsilon ^2 \lVert  \partial_t u^\varepsilon \rVert_{\mathcal{G}^{s - 3}_2}\right) \\
    &\lesssim  \lVert  \partial_t u^R \rVert_{\mathcal{G}^{s - 6}_2} \left( \sqrt{c} \lVert u^R \rVert_{\mathcal{G}^{s - 4}_2} + \sqrt{C_{\mathrm{in}}} \lVert  (u^R, h^R) \rVert_{\mathcal{G}^{s - 4}_2} + \sqrt{c \mathcal{E}_h} + C_{\mathrm{in}} \varepsilon ^2 + \varepsilon ^2 \lVert  \partial_t u^\varepsilon \rVert_{\mathcal{G}^{s - 3}_2}\right). 
\end{align*} 
Using Young's inequality and smallness of \(c\) and \(\varepsilon ^2\) to absorb terms on the left-side, we obtain 
\begin{equation}\label{eq:EnergyInequality for par_tu^R}
    \mathrm{D}_u + \frac{\mathrm{d}}{\mathrm{d}t} \lVert  (\alpha , \varepsilon \partial_x, \partial_y)  u^R \rVert_{\mathcal{G}^{s - 6}_2}^2 \lesssim  C_{\mathrm{in}}  \lVert  h^R \rVert_{\mathcal{G}^{s - 4}_2}^2 +C_{\mathrm{in}} \mathcal{E}_u +  C_{\mathrm{in}} \mathcal{E}_h  + \varepsilon ^4 \lVert  \partial_t u^\varepsilon \rVert_{\mathcal{G}^{s - 3}_2}^2. 
\end{equation} 
Summing up \eqref{eq:EnergyInequality for u^R}, \eqref{eq:EnergyInequality for h^R} and \eqref{eq:EnergyInequality for par_tu^R} and dropping positive terms, we obtain  
\begin{align}\label{eq:EnergyInequality for Convergence}
    \begin{split} 
      &  \frac{\mathrm{d}}{\mathrm{d}t}\left(\mathcal{E}_u + \mathcal{E}_h +\mathrm{E}_u +  \lVert  (\alpha u^R, \alpha v^R, \partial_y u^R) \rVert_{\mathcal{G}^{s - 6}_2}^2\right) +   \lVert  \partial_y u^R \rVert_{\mathcal{G}^{s - 4}_2}^2 +  \dot{\lambda} \lVert  u^R \rVert_{\mathcal{G}^{s - 4 + \frac{1}{4}}_2}^2  + \mathcal{D}_u  \\
      &\qquad + \dot{\lambda} \lVert  (  \partial_t ,  \partial_y) h^R \rVert_{\mathcal{G}^{\sigma - 4  + \frac{1}{4}}_2}^2  +  \dot{\lambda} ^3 \lVert  h^R \rVert_{\mathcal{G}^{s - 4 }_2}^2  - \frac{1}{2} \lVert  u^R \rVert_{\mathcal{G}^{s - 4 + \frac{1}{4}}_2}^2   - \frac{1}{32} \lVert  \partial_y u^R \rVert_{\mathcal{G}^{s - 4}_2}^2 \\
      &\qquad \lesssim  C_{\mathrm{in}} \left( \mathcal{E}_h + \mathcal{E}_u + \mathrm{E}_u + \lVert  h^R \rVert_{\mathcal{G}^{s - 4}_2}^2\right) + \varepsilon ^4 \lVert  \partial_t u^\varepsilon  \rVert_{\mathcal{G}^{s  -3}_2}^2  +  \varepsilon ^2 \sqrt{C_{\mathrm{in}} \mathcal{E}_h}. 
    \end{split} 
\end{align} 
Again using \(\dot{\lambda} \geq 32 C C_{\mathrm{in}}\) to move \(\lVert  h \rVert_{\mathcal{G}^{s - 4}_2}^2\) to left, we conclude 
\begin{equation}\label{eq:Energy Inequality Reduced for Convergence}
    \frac{\mathrm{d}}{\mathrm{d}t}\left(\mathcal{E}_u + \mathcal{E}_h +\mathrm{E}_u +  \lVert  ( u^R, v^R, \partial_y u^R) \rVert_{\mathcal{G}^{s - 6}_2}^2\right)  \lesssim  C_{\mathrm{in}} (\mathcal{E}_h + \mathcal{E}_u + \mathrm{E}_u ) +   \varepsilon ^4 \left(\lVert  \partial_t u^\varepsilon \rVert_{\mathcal{G}^{s - 3}_2}^2 + C_{\mathrm{in}}\right).  
\end{equation}  

Thanks to \eqref{eq:Smallness on Eu for convergence}, we have for \(T_0\) coming from Theorem \ref{thm:Local well-posedness}  
\begin{equation}\label{eq:Convergence:Integral control on dtu^e}
    \int_{0}^{T_0} \lVert  \partial_t u^\varepsilon \rVert_{\mathcal{G}^{s - 3}_2}^2 \leq 2 C_{\mathrm{in}} .  
\end{equation} 
Using Gronwall's inequality, we conclude
\begin{equation}\label{eq:Convergence:Final Energy bound}
    \mathcal{E}_u + \mathcal{E}_h + \mathrm{E}_u \leq   \varepsilon ^4 ( 2 C_{\mathrm{in}} + C_{\mathrm{in}} T_0) e^{C C_{\mathrm{in}} T_0 } \leq C_{{\mathrm{in}}, c_0, \delta _0 } \varepsilon ^4, \quad t \leq T_0.
\end{equation} 
Thus \eqref{eq:Convergence:Final Energy bound} combined with \eqref{eq:Convergence:Definition of energies} implies Theorem \ref{thm:Convergence rate}. \qed

\section{Global well-posedness in Gevrey-\texorpdfstring{\(2\)}{2} class}
In this section, we study the global well-posedness with small data and prove Theorem \ref{thm:Global well-posedness}. 

Let us first recall the system: For \((t,x,y) \in \mathbb{R}_+ \times \mathbb{R} \times [0,1]: \)
\begin{equation}\label{eq:Anisotropic 2D NSM without e}  
    \begin{cases} \partial_t u + u \partial_x u + v \partial_y u  - \varepsilon ^2 \partial_{xx} u - \partial_{yy} u  + \partial_x p  = (f + fh - uh^2  - 2uh - u ) , \\ 
    \varepsilon ^2 (\partial_t v + u \partial_x v  + v \partial_y v - \varepsilon ^2 \partial_{xx} v - \partial_{yy} v)  + \partial_y p = - (e  + eh + vh^2 + 2vh  + v ), \\
    \partial_x u + \partial_y v = \partial_x e + \partial_y f = 0, \\ \partial_t h + \partial_x f - \partial_y e = 0, \\ 
    \partial_{tt} h  + \partial_t h - (\varepsilon ^2 \partial_{xx} h + \partial_{yy} h   )  + u \partial_x h + v \partial_y h = 0, \\ 
    (u,v,e, f, h)|_{t = 0} = (u_{\mathrm{in}}, v_{\mathrm{in}}, e_{\mathrm{in}},f_{\mathrm{in}}, h_{\mathrm{in}}),\\
    (u,v,\partial_y e, f, h)|_{y = 0,1} = 0, \\ 
   \end{cases}  
\end{equation} 

Here we have normalised \( \alpha = \beta = \gamma = 1\) and dropped the \(\varepsilon\) superscripts. Define  \( \tilde{\phi} := e^{\theta t } \phi \) where \(0 <  \theta < \min \{\frac{\delta _0}{2} , 1 \} \) is small enough and satisfies 
\begin{equation}\label{eq:Definition of theta}
    \lVert  \partial_y \phi \rVert_{L^2} \geq 10 \theta \lVert  \phi \rVert_{L^2} , \quad   \forall  \phi \in H^1_0(0,1). 
\end{equation} 

Then for \(\sigma = s - \frac{3}{4}\) and \(s \geq 10\), define 
\begin{align*}
    \mathcal{E}_{u,v} &:= \lVert  (\tilde{u}, \varepsilon \tilde{v}) \rVert_{\mathcal{G}^{s}_2}^2, \quad \mathcal{E}_{h} := \lVert  (\partial _t, \varepsilon \partial_x, \partial_y ) \tilde{h}  \rVert_{\mathcal{G}^{\sigma}_2}^2 - ( \theta - \theta ^2) \lVert  \tilde{h} \rVert_{\mathcal{G}^{s}_2}^2    ,  \\
    \mathcal{CK}_{h} & := \dot{\lambda} \lVert  ( \partial _t, \varepsilon \partial_x, \partial_y) \tilde{h} \rVert_{\mathcal{G}^{\sigma + \frac{1}{4}}_2}^2 + 2\dot{\lambda} \lVert  \partial_t [ \mathrm{A}_{\sigma + \frac{1}{4}} \tilde{h}] \rVert_{L^2}^2 + \dot{\lambda}^3 \lVert  \tilde{h} \rVert_{\mathcal{G}^{s}_2}^2, \\
    \mathrm{E}_{u} &:= \lVert (u,v,\partial_yu ,\sqrt{2}\varepsilon \partial_x u,  \varepsilon ^2 \partial_x v) \rVert_{\mathcal{G}^{s-2}_2}^2 , \quad\mathrm{E}_{h}  := \lVert  (\partial_t, \varepsilon \partial_x, \partial_y) h \rVert_{\mathcal{G}^{s - 1}_2}
\end{align*}   
We retain the positivity of the energy \(\mathcal{E}_h\) as \eqref{eq:Definition of theta} implies 
\[ \frac{1}{2} \lVert  (\partial_t, \varepsilon \partial_x, \partial_y ) \tilde{h} \rVert_{\mathcal{G}^{\sigma}_2}^2 \leq  \mathcal{E}_h. 
\] 
We now proceed with the standard bootstrap argument. To that end, we obtain a priori estimate on the energies: 
\subsection{Bootstrap on \texorpdfstring{\(\mathcal{E}_h\)}{Eh}}
By acting \( \mathrm{A}_{\sigma} e^{\theta t} := e^{(\delta _0 - \lambda(t)) \langle  \partial_x \rangle^{\frac{1}{2}}} \langle  \partial _x \rangle^s e^{\theta t }\) on the equation for \(h\), and using 
\begin{align*}
    \partial_t \tilde{h}     &= \theta \tilde{h} + e^{\theta t } \partial_t h , \\
    \partial_{tt} \tilde{h} &= \theta ^2 \tilde{h} + 2\theta e^{\theta t } \partial_t h  + e^{\theta t} \partial_{tt} h  =  - \theta  ^2 \tilde{h} + 2 \theta \partial_t \tilde{h} + e^{\theta t} \partial_{tt} h, 
\end{align*} 
we obtain 
\begin{align*}
    0 &= \mathrm{A}_{\sigma} [ \partial_{tt} \tilde{h}] - 2 \theta \mathrm{A}_{\sigma} [ \partial_t \tilde{h}] + \theta ^2 \mathrm{A}_{\sigma} \tilde{h}  + \mathrm{A}_{\sigma} \partial_t \tilde{h}  - \theta \mathrm{A}_{\sigma} \tilde{h} - \mathrm{A}_{\sigma} ( \varepsilon ^2 \partial_{xx} \tilde{h} + \partial_{yy} \tilde{h})  +\mathrm{A}_{\sigma} ( (u,v) \cdot \nabla \tilde{h}) \\
    &= \partial_t ( \mathrm{A}_{\sigma} [\partial_t \tilde{h}]) + \dot{\lambda} \langle \partial_x \rangle^{\frac{1}{2}} \mathrm{A}_{\sigma} [ \partial_t \tilde{h}] + (1 - 2 \theta) \mathrm{A}_{\sigma} [ \partial_t \tilde{h}]  - (\theta - \theta ^2) \mathrm{A}_{\sigma} \tilde{h} \\
    &\qquad\qquad\quad - \mathrm{A}_{\sigma} ( \varepsilon ^2 \partial_{xx} \tilde{h} + \partial_{yy} \tilde{h}) + \mathrm{A}_{\sigma}((u,v) \cdot \nabla \tilde{h}). 
\end{align*} 
Inner-producting with \(\mathrm{A}_{\sigma} [ \partial_t \tilde{h}]\), we obtain 
\begin{align*}
    &\frac{1}{2}\frac{\mathrm{d}}{\mathrm{d}t} \lVert  \partial_t \tilde{h} \rVert_{\mathcal{G}^{\sigma}_2}^2 + \dot{\lambda} \lVert  \partial_t \tilde{h} \rVert_{\mathcal{G}^{\sigma +  \frac{1}{4}}_2}^2 + (1 - 2 \theta) \lVert \partial_t \tilde{h} \rVert_{\mathcal{G}^{\sigma}_2}^2 =  \\
    &\qquad\qquad ( \theta  - \theta ^2)  \langle  \mathrm{A}_{\sigma} \tilde{h} , \mathrm{A}_{\sigma} [\partial_t \tilde{h}] \rangle  + \langle  \mathrm{A}_{\sigma} (\varepsilon ^2 \partial_{xx} + \partial_{yy}) \tilde{h}, \mathrm{A}_{\sigma} [ \partial_t\tilde{h}] \rangle  - \langle  \mathrm{A}_{\sigma} ((u,v) \cdot \nabla \tilde{h}), \mathrm{A}_{\sigma} [\partial_t \tilde{h}] \rangle. 
\end{align*} 
Using \( \partial_t[ \mathrm{A}_{\sigma} f] = \mathrm{A}_{\sigma} [\partial_t f ]- \dot{\lambda} \mathrm{A}_{s  + \frac{1}{2}} f \), we get 
\begin{align*}
    \langle  \mathrm{A}_{\sigma} \tilde{h}, \mathrm{A}_{\sigma} [ \partial_t \tilde{h}] \rangle &= \frac{1}{2}\frac{\mathrm{d}}{\mathrm{d}t} \lVert  \tilde{h} \rVert_{\mathcal{G}^{\sigma}_2}^2 + \dot{\lambda} \lVert  \tilde{h} \rVert_{\mathcal{G}^{\sigma +  \frac{1}{4}}_2}^2, \\
    \langle  \mathrm{A}_{\sigma}(\varepsilon ^2 \partial_{xx} + \partial_{yy} \tilde{h}) , \mathrm{A}_{\sigma} [ \partial_t \tilde{h}] \rangle &= -\frac{1}{2} \frac{\mathrm{d}}{\mathrm{d}t} \lVert  (\varepsilon \partial_x, \partial_y) \tilde{h} \rVert_{\mathcal{G}^{\sigma}_2}^2 - \dot{\lambda} \lVert  (\varepsilon \partial_x, \partial_y) \tilde{h} \rVert_{\mathcal{G}^{\sigma +  \frac{1}{4}}_2}^2. 
\end{align*} 
We thus have 
\begin{align*}
    &\frac{1}{2} \frac{\mathrm{d}}{\mathrm{d}t} \lVert  (\partial_t, \varepsilon \partial_x, \partial_y ) \tilde{h}  \rVert_{\mathcal{G}^{\sigma}_2}^2 - (\theta - \theta ^2) \lVert \tilde{h} \rVert_{\mathcal{G}^{\sigma}_2}^2 + \dot{\lambda} \lVert  (\partial_t, \varepsilon \partial_x, \partial_y, - (\theta - \theta ^2)) \tilde{h} \rVert_{\mathcal{G}^{\sigma +  \frac{1}{4}}_2}^2  + (1 - 2 \theta) \lVert  \partial_t \tilde{h} \rVert_{\mathcal{G}^{\sigma}_2}^2 \\
    &\qquad\qquad\qquad\qquad = - \langle  \mathrm{A}_{\sigma} ((u,v) \cdot \nabla \tilde{h})  , \mathrm{A}_{\sigma} [ \partial_t \tilde{h}] \rangle. 
\end{align*} 
Since \( \theta\) is small, and we have \( \lVert  \tilde{h} \rVert_{L^2} \leq (10 \theta) ^{-1} \lVert  \partial_y \tilde{h} \rVert_{L^2}\), we get 
\[  \frac{\mathrm{d}}{\mathrm{d}t} \lVert (\partial_t, \varepsilon \partial_x, \partial_y) \tilde{h} \rVert_{\mathcal{G}^{\sigma}_2}^2- (\theta - \theta ^2) \lVert \tilde{h} \rVert_{\mathcal{G}^{\sigma}_2}^2   + \dot{\lambda} \lVert  (\partial_t, \varepsilon \partial_x, \partial_y) \tilde{h} \rVert_{\mathcal{G}^{\sigma +  \frac{1}{4}}_2}^2 + \lVert  \partial_t \tilde{h}  \rVert_{\mathcal{G}^{\sigma}_2} ^2 \lesssim   \lvert \langle  \mathrm{A}_{\sigma} ((u,v) \cdot \nabla \tilde{h}) , \mathrm{A}_{\sigma} [ \partial_t \tilde{h}]\rangle \rvert . 
\] 
We now deal with the \(\dot{\lambda} \lVert  \partial_t \tilde{h} \rVert_{\mathcal{G}^{\sigma + \frac{1}{4}}_2}^2\) term. We have 
\begin{align*}
 \dot{\lambda}    \lVert  \partial_t \tilde{h} \rVert_{\mathcal{G}^{\sigma + \frac{1}{4}}_2}^2 &=  \dot{\lambda} \langle  \mathrm{A}_{\sigma + \frac{1}{4}} \partial_t \tilde{h} , \mathrm{A}_{\sigma + \frac{1}{4}} \partial_t \tilde{h} \rangle = \dot{\lambda} \lVert  \partial_t [ \mathrm{A}_{\sigma + \frac{1}{4}} \tilde{h}] \rVert_{L^2}^2 + 2 \dot{\lambda} \langle  \partial_t [ \mathrm{A}_{\sigma + \frac{1}{4}} \tilde{h}] , \dot{\lambda} \mathrm{A}_{\sigma + \frac{3}{4}}\tilde{h} \rangle  + \dot{\lambda} \lVert  \mathrm{A}_{\sigma + \frac{3}{4}} \tilde{h}  \rVert_{L^2}^2 \\
 &= \dot{\lambda} \lVert  \partial_t [ \mathrm{A}_{\sigma + \frac{1}{4}} \tilde{h}] \rVert_{L^2}^2  + \frac{\mathrm{d}}{\mathrm{d}t} \left(\dot{\lambda}^2 \lVert \tilde{h} \rVert_{\mathcal{G}^{\sigma + \frac{1}{2}}_2}^2\right)  - 2 \ddot{\lambda}\dot{\lambda} \lVert  \tilde{h} \rVert_{\mathcal{G}^{\sigma + \frac{1}{2}}_2}^2 + \dot{\lambda} ^3 \lVert  \tilde{h} \rVert_{\mathcal{G}^{\sigma + \frac{3}{4}}_2}^2. 
\end{align*} 
Splitting \(\dot{\lambda} \lVert  \partial_t \tilde{h} \rVert_{\mathcal{G}^{\sigma + \frac{1}{4}}}\) into two halves: 
\begin{align*}
    \frac{\mathrm{d}}{\mathrm{d}t} \mathcal{E}_h +  \mathcal{CK}_h + \lVert  \partial_t \tilde{h} \rVert_{\mathcal{G}^{\sigma}_2}^2\lesssim   \lvert \langle  \mathrm{A}_{\sigma} ((u,v) \cdot \nabla \tilde{h}) , \mathrm{A}_{\sigma} [ \partial_t \tilde{h}] \rangle \rvert +  \ddot{\lambda} \dot{\lambda} \lVert  \tilde{h} \rVert_{\mathcal{G}^{\sigma + \frac{1}{2}}_2}^2
\end{align*} 
 where we have fixed \(\ddot{\lambda} \leq 0 \leq \dot{\lambda} \) so the last term is dropped. 
The controls for non-linear terms are:
\begin{align*}
     \lvert  \langle  \mathrm{A}_{\sigma} (u \partial_x \tilde{h}), \mathrm{A}_{\sigma} [ \partial_t \tilde{h}] \rangle \rvert &\leq \left( \lVert  u \rVert_{\mathcal{G}^{2}_\infty} \lVert  \tilde{h} \rVert_{\mathcal{G}^{\sigma + \frac{3}{4}}_2} \lVert  \partial_t \tilde{h} \rVert_{\mathcal{G}^{\sigma + \frac{1}{4}}_2}+ \lVert  u \rVert_{\mathcal{G}^{\sigma - \frac{1}{4}}_\infty} \lVert  \partial_x \tilde{h} \rVert_{\mathcal{G}^{2}_2} \lVert  \partial_t \tilde{h} \rVert_{\mathcal{G}^{\sigma + \frac{1}{4}}_2} \right) \\
     &\lesssim  \lVert  u \rVert_{\mathcal{G}^{2}_2}^{\frac{1}{2}} \lVert  \partial_y u  \rVert_{\mathcal{G}^{2}_2}^{\frac{1}{2}} \lVert  \tilde{h} \rVert_{\mathcal{G}^{s}_2} \lVert  \partial_t \tilde{h} \rVert_{\mathcal{G}^{\sigma + \frac{1}{4}}_2}  + \lVert  u \rVert_{\mathcal{G}^{s - 1}_2}^{\frac{1}{2}} \lVert  \partial_ y u   \rVert_{\mathcal{G}^{s - 1}_2}^{\frac{1}{2}} \lVert  \tilde{h} \rVert_{\mathcal{G}^{3}_2} \lVert  \partial_t \tilde{h} \rVert_{\mathcal{G}^{\sigma}_2} \\ 
     &\lesssim \kappa  e^{ - \theta t / 2}  \lVert  \tilde{h} \rVert_{\mathcal{G}^{s}_2} \lVert  \partial_t \tilde{h} \rVert_{\mathcal{G}^{\sigma + \frac{1}{4}}_2}  + \kappa ^3  e^{-\theta t /2 }  , \\
      \lvert  \langle  \mathrm{A}_{\sigma} ( v \partial_y \tilde{h}) , \mathrm{A}_{\sigma} [ \partial_t \tilde{h}]  \rangle \rvert  &\lesssim \lVert  \partial_t \tilde{h} \rVert_{\mathcal{G}^{\sigma}_2}\left( \lVert  v \rVert_{\mathcal{G}^{\sigma}_\infty} \lVert  \partial_y \tilde{h} \rVert_{\mathcal{G}^{2}_2} + \lVert  v \rVert_{\mathcal{G}^{2}_\infty} \lVert  \partial_y \tilde{h} \rVert_{\mathcal{G}^{\sigma} _2}\right) \\
      &\lesssim  \lVert  \partial_t \tilde{h} \rVert_{\mathcal{G}^{\sigma}_2} \left( \lVert  \partial_y v \rVert_{\mathcal{G}^{\sigma}_2} \lVert  \partial_y \tilde{h} \rVert_{\mathcal{G}^{2}_2} + \lVert  \partial_y v  \rVert_{\mathcal{G}^{2}_2} \lVert  \partial_y \tilde{h} \rVert_{\mathcal{G}^{\sigma}_2}\right) \\
      &\lesssim  \lVert  \partial_t \tilde{h} \rVert_{\mathcal{G}^{\sigma}_2} \left( \lVert  \tilde{u} \rVert_{\mathcal{G}^{s + \frac{1}{4}}_2} \lVert  \partial_y h \rVert_{\mathcal{G}^{2}_2} + \lVert  u \rVert_{\mathcal{G}^{3}_2} \lVert  \partial_y \tilde{h} \rVert_{\mathcal{G}^{\sigma}_2}\right) \\
      &\lesssim  \kappa \left( \lVert  \tilde{u} \rVert_{\mathcal{G}^{s + \frac{1}{4}}_2} \kappa e^{-\theta t} + \kappa ^2 e^{-\theta t }\right) \lesssim  \kappa  e^{-\theta t } \lVert  \tilde{u} \rVert_{\mathcal{G}^{s + \frac{1}{4}}_2}^2 + \kappa ^3 e^{-\theta t }. 
\end{align*}  
Note that the reason to invoke \( \lVert  \phi \rVert_{L^\infty} \leq \lVert  \phi \rVert_{L^2}^{\frac{1}{2}} \lVert  \partial_y \phi \rVert_{L^2}^{\frac{1}{2}}\) is to obtain the term \(\lVert  \phi \rVert_{L^2}^{\frac{1}{2}}\) which can be bounded by \( \leq \kappa e^{-\theta t / 2 }\) which gives a weaker growth requirement for \(\dot{\lambda}\) letting us have integrability for \(\dot{\lambda}\) (necessary to invoke Gronwall later on the full inequality). 

Combining the above we obtain 
\begin{align*}
    \frac{\mathrm{d}}{\mathrm{d}t} \mathcal{E}_h +  \mathcal{CK}_h +  \lVert  \partial_t \tilde{h} \rVert_{\mathcal{G}^{\sigma}_2}^2  &\lesssim  \kappa ^3 e^{-\theta t /2}     + \kappa  e^{-\theta t } \lVert  \tilde{u}\rVert_{\mathcal{G}^{s + \frac{1}{4}}_2}^2 +  \kappa e^{-\theta t / 2} \lVert  \tilde{h} \rVert_{\mathcal{G}^{s}_2} \lVert  \partial_t \tilde{h} \rVert_{\mathcal{G}^{\sigma + \frac{1}{4}}_2} \\
      &\lesssim  \kappa ^3 e^{- \theta t / 2} +  \kappa e^{-\theta t / 2} \left( \lVert  \tilde{u} \rVert_{\mathcal{G}^{s + \frac{1}{4}}_2}^2  + \lVert  \partial_t \tilde{h} \rVert_{\mathcal{G}^{\sigma + \frac{1}{4}}_2}^2 + \lVert  \tilde{h} \rVert_{\mathcal{G}^{s}_2}^2 \right).  
\end{align*} 
Letting \(\dot{\lambda} \geq 16 C \kappa  e^{-\theta t /2 } \land (16 C \kappa e ^{- \theta t / 2} )^{\frac{1}{3}}\) (first to absorb \(\partial_t h\) and second to absorb \(h\)), we get    
       \begin{equation}\label{eq:Bootstrap on Eh for GWP}
        \frac{\mathrm{d}}{\mathrm{d}t} \mathcal{E}_h +  \mathcal{CK}_h + \lVert  \partial_t \tilde{h} \rVert_{\mathcal{G}^{\sigma}_2}^2 \lesssim  \kappa ^3 e^{-\theta t /2} + \kappa e^{-\theta t / 2}  \lVert \tilde{u} \rVert_{\mathcal{G}^{s + \frac{1}{4}}_2}^2. 
       \end{equation}

\subsection{Bootstrap on \texorpdfstring{\(\mathcal{E}_u\)}{Eu}}
Recall that 
\[  \partial_t [ \mathrm{A}_{s} \tilde{u}] = \mathrm{A}_{s} e^{\theta t } [ \partial_t u]  - \dot{\lambda} \langle  \partial_x \rangle^{\frac{1}{2}}  \mathrm{A}_{s} \tilde{u} + \theta \mathrm{A}_{s} \tilde{u}. 
\] 
By acting \( \mathrm{A}_{s} e^{\theta t }\) on the equation of \((u,v)\) and using above, we have 
\begin{align*}
    &\partial_t [ \mathrm{A}_{s} \tilde{u}] + \dot{\lambda} \langle  \partial_x \rangle^{\frac{1}{2}} \mathrm{A}_{s} \tilde{u}    + (1 - \theta) \mathrm{A}_{s} \tilde{u}  + \mathrm{A}_{s} ((u,v) \cdot \nabla \tilde{u}) - \mathrm{A}_{s} (\varepsilon ^2 \partial_{xx} \tilde{u} + \partial_{yy} \tilde{u}) + \partial_x \mathrm{A}_{s} \tilde{p} \\
    &\qquad\qquad\quad  = \mathrm{A}_{s} ( \tilde{f} + f \tilde{h} - u h \tilde{h} - 2 u \tilde{h}), \\ 
    & \varepsilon ^2 \left( \partial_t [ \mathrm{A}_{s} \tilde{v}] + \dot{\lambda} \langle  \partial_x \rangle^{\frac{1}{2}} \mathrm{A}_{s} \tilde{v}  + \mathrm{A}_{s} ( (u,v) \cdot \nabla \tilde{v}) - \mathrm{A}_{s} (\varepsilon ^2 \partial_{xx} \tilde{v} + \partial_{yy} \tilde{v})\right) + (1 - \varepsilon ^2 \theta) \mathrm{A}_{s} \tilde{v} + \partial_y \mathrm{A}_{s} \tilde{p} \\
    &\qquad\qquad\quad = - \mathrm{A}_{s} ( \tilde{e} + e \tilde{h} + v h \tilde{h} + 2 v \tilde{h}). 
\end{align*} 
Inner-producting with \( (\mathrm{A}_{s} \tilde{u}, \mathrm{A}_{s} \tilde{v})\), we get 
\begin{align*}
    &\frac{1}{2} \frac{\mathrm{d}}{\mathrm{d}t} \lVert  (\tilde{u}, \varepsilon \tilde{v}) \rVert_{\mathcal{G}^{s}_2} ^2  + \dot{\lambda} \lVert  (\tilde{u}, \varepsilon \tilde{v}) \rVert_{\mathcal{G}^{s + \frac{1}{4}}_2}^2  + \lVert  ((1- \theta) \tilde{u} , (1 - \varepsilon ^2 \theta) \tilde{v}) \rVert_{\mathcal{G}^{s}_2}^2  + \lVert  ( \partial_y \tilde{u} , \sqrt{2}\varepsilon \partial_x \tilde{u}, \varepsilon ^2 \partial_x \tilde{v}) \rVert_{\mathcal{G}^{s}_2}^2 \\
    &\qquad\qquad\quad = - \langle  \mathrm{A}_{s} ((u,v) \cdot \nabla \tilde{u}) , \mathrm{A}_{s} \tilde{u} \rangle  - \varepsilon ^2 \langle  \mathrm{A}_{s} ((u,v) \cdot \nabla \tilde{v}) , \mathrm{A}_{s} \tilde{v} \rangle \\
    &\qquad\qquad\qquad \qquad +  \langle  \mathrm{A}_{s} (\tilde{f}  + f \tilde{h} - uh \tilde{h} - 2 u \tilde{h}) , \mathrm{A}_{s} \tilde{u} \rangle -\langle   \mathrm{A}_{s} ( \tilde{e} + e \tilde{h} + v h \tilde{h}  + 2 v \tilde{h}) , \mathrm{A}_{s} \tilde{v} \rangle. 
\end{align*}  
Here come the estimates: 
\begin{align*}
     \lvert \langle \mathrm{A}_{ s} (u \partial_x \tilde{u}) , \mathrm{A}_{s} \tilde{u} \rangle \rvert  &\lesssim   \lvert  \langle  u \partial _x \mathrm{A}_{s} \tilde{u}, \mathrm{A}_{s} \tilde{u} \rangle \rvert +  \lvert  \langle  [ \mathrm{A}_{s} , u \partial_x] \tilde{u}, \mathrm{A}_{s} \tilde{u} \rangle \rvert  \\
     &\lesssim  \lVert  \partial_x u \rVert_{L^\infty_{x,y}} \lVert  \tilde{u} \rVert_{\mathcal{G}^{s}_2}^2 + \lVert  u \rVert_{\mathcal{G}^{2}_{\infty}} \lVert  \tilde{u} \rVert_{\mathcal{G}^{s + \frac{1}{4}}_2}^2  + \lVert  u \rVert_{\mathcal{G}^{s}_2} \lVert  \tilde{u} \rVert_{\mathcal{G}^{s}_2} \lVert  \tilde{u} \rVert_{\mathcal{G}^{2}_{\infty}} \\
     &\lesssim  \kappa e^{-\theta t / 2} \lVert  \tilde{u} \rVert_{\mathcal{G}^{s + \frac{1}{4}}_2}^2  + \lVert  u \rVert_{\mathcal{G}^{2}_\infty} \lVert  \tilde{u} \rVert_{\mathcal{G}^{s}_2}^2 \lesssim  \kappa e^{-\theta t / 2} \lVert  \tilde{u} \rVert_{\mathcal{G}^{s + \frac{1}{4}}_2}^2 + \kappa ^3 e^{-\theta t / 2},  \\ 
      \lvert  \langle  \mathrm{A}_{s} (v \partial_y \tilde{u}) , \mathrm{A}_{s} \tilde{u} \rangle \rvert &\lesssim \lVert  \tilde{u} \rVert_{\mathcal{G}^{s}_\infty} \left( \lVert  v \rVert_{\mathcal{G}^{s}_2} \lVert  \partial_y \tilde{u} \rVert_{\mathcal{G}^{2}_2} + \lVert  v \rVert_{\mathcal{G}^{2}_2} \lVert  \partial_y \tilde{u} \rVert_{\mathcal{G}^{s}_2}\right) \\
      &\lesssim  \lVert  \partial_y \tilde{u} \rVert_{\mathcal{G}^{s}_2}\kappa e^{-\theta t } \left(\lVert  \tilde{v} \rVert_{\mathcal{G}^{s}_2} +  \lVert  \partial_y \tilde{u} \rVert_{\mathcal{G}^{s}_2} \right) \lesssim  \kappa e^{-\theta t } \left( \lVert  \tilde{v} \rVert_{\mathcal{G}^{s}_2}^2 + \lVert  \partial_y \tilde{u} \rVert_{\mathcal{G}^{s}_2}^2\right) , \\
      \varepsilon ^2  \lvert \langle \mathrm{A}_{s} ( u \partial_x \tilde{v}), \mathrm{A}_{s} \tilde{v}  \rangle\rvert &\lesssim \lVert  u \rVert_{\mathcal{G}^{2}_\infty} \lVert \varepsilon \tilde{v} \rVert_{\mathcal{G}^{s + \frac{1}{4}}_2}^2  + \lVert  u \rVert_{\mathcal{G}^{s}_2} \lVert \varepsilon \tilde{v} \rVert_{\mathcal{G}^{s}_2} \lVert \varepsilon  \tilde{v} \rVert_{\mathcal{G}^{2}_\infty} \\
      &\lesssim  \kappa e^{-\theta t / 2} \lVert  \varepsilon\tilde{v} \rVert_{\mathcal{G}^{s + \frac{1}{4}}_2}^2  + \lVert  u \rVert_{\mathcal{G}^{s}_2} \lVert \varepsilon \tilde{v} \rVert_{\mathcal{G}^{s}_2}  \lVert  \varepsilon \partial_y \tilde{v} \rVert_{\mathcal{G}^{2}_2} \\ 
      &\lesssim \kappa e^{-\theta t / 2} \lVert  \varepsilon \tilde{v} \rVert_{\mathcal{G}^{s + \frac{1}{4}}_2}^2 +  \kappa e^{-\theta t}\lVert  (1, \partial_y) \varepsilon \tilde{v} \rVert_{\mathcal{G}^{s}_2}^2 , \\
     \varepsilon ^2 \lvert  \langle  \mathrm{A}_{s} (v \partial_y \tilde{v}), \mathrm{A}_{s} \tilde{v} \rangle \rvert &\lesssim  \lVert  \varepsilon\tilde{v} \rVert_{\mathcal{G}^{s}_\infty}  \left( \lVert  v \rVert_{\mathcal{G}^{2}_2} \lVert  \varepsilon \partial_y \tilde{v} \rVert_{\mathcal{G}^{s}_2} + \lVert  \varepsilon \tilde{v} \rVert_{\mathcal{G}^{s}_2} \lVert  \partial_y v \rVert_{\mathcal{G}^{2}_2} \right) \\
     &\lesssim  \lVert  \varepsilon \partial_y v \rVert_{\mathcal{G}^{s}_2} \left( \lVert  u \rVert_{\mathcal{G}^{3}_2} \lVert  \varepsilon \partial_y \tilde{v} \rVert_{\mathcal{G}^{s}_2} + \lVert  \varepsilon \tilde{v} \rVert_{\mathcal{G}^{s}_2} \lVert  u \rVert_{\mathcal{G}^{3}_2}\right) \lesssim  \kappa \lVert  (1, \partial_y) \varepsilon \tilde{v} \rVert_{\mathcal{G}^{s}_2}^2. 
\end{align*} 
We also have 
\[  \langle  \mathrm{A}_{s} \hat{f} , \mathrm{A}_{s} \hat{u} \rangle  + \langle  \mathrm{A}_{s} \hat{e} , \mathrm{A}_{s} \hat{v} \rangle  = \langle  \mathrm{A}_{s} \hat{f}, \mathrm{A}_{s} (- \partial_y \hat{\phi}) \rangle + \langle  \mathrm{A}_{s} \hat{e} , \mathrm{A}_{s} (\partial_x \hat{\phi} ) \rangle = \langle  \mathrm{A}_{s} (\nabla \cdot (e,f)) , \mathrm{A}_{s} \hat{\phi} \rangle = 0, 
\] 
where we have used the divergence-free condition of \((e,f)\) and the existence of stream-function for \((u,v)\) proven in section 3. For the remaining, 
\begin{align*}
    \langle  \langle  \mathrm{A}_{s} ( f \tilde{h}  -u h \tilde{h} - 2 u \tilde{h}) , \mathrm{A}_{s} \tilde{u} \rangle \rangle 
    &\lesssim   \lVert  \tilde{u} \rVert_{\mathcal{G}^{s}_2} \lVert  \tilde{h} \rVert_{\mathcal{G}^{s}_2}  \left( \lVert  f \rVert_{\mathcal{G}^{2}_\infty} + \lVert  h \rVert_{\mathcal{G}^{2}_\infty} \lVert  u \rVert_{\mathcal{G}^{2}_\infty}  + \lVert  u \rVert_{\mathcal{G}^{2}_\infty}  \right) \\
    & \qquad\qquad + \lVert  \tilde{u} \rVert_{\mathcal{G}^{s}_2} \lVert  \tilde{h} \rVert_{\mathcal{G}^{2}_\infty} \left( \lVert  f \rVert_{\mathcal{G}^{s}_2} + \lVert  h \rVert_{\mathcal{G}^{2}_\infty} \lVert  u \rVert_{\mathcal{G}^{s}_2} + \lVert  u \rVert_{\mathcal{G}^{s}_2}\right)  \\
    &\lesssim  \lVert  \tilde{u} \rVert_{\mathcal{G}^{s}_2} \lVert  \tilde{h} \rVert_{\mathcal{G}^{s}_2}  \left( \lVert  \partial_t h \rVert_{\mathcal{G}^{2}_2}  + \lVert  \partial_y h \rVert_{\mathcal{G}^{2}_2} \lVert  u \rVert_{\mathcal{G}^{2}_2}^{\frac{1}{2}} \lVert  \partial_yu  \rVert_{\mathcal{G}^{2}_2}^{\frac{1}{2}} + \lVert  u \rVert_{\mathcal{G}^{2}_2}^{\frac{1}{2}} \lVert  \partial_y u \rVert_{\mathcal{G}^{2}_2}^{\frac{1}{2}}\right) \\
    & \qquad\qquad + \lVert  \tilde{u} \rVert_{\mathcal{G}^{s}_2} \lVert  \partial_y h \rVert_{\mathcal{G}^{2}_2}\left( \lVert  \partial_t \tilde{h} \rVert_{\mathcal{G}^{\sigma}_2} + \lVert  \partial_y \tilde{h} \rVert_{\mathcal{G}^{2}_2} \lVert  u \rVert_{\mathcal{G}^{s}_2} + \lVert  \tilde{u} \rVert_{\mathcal{G}^{s}_2}\right) \\
    &\lesssim  \kappa \lVert  \tilde{h} \rVert_{\mathcal{G}^{s}_2} \left(\kappa e^{-\theta t  }  + \kappa ^2 e^{-\theta t / 2}   + \kappa e^{-\theta t / 2 }\right)  + \kappa ^2 e^{-\theta t} ( \kappa + \kappa ^2 e^{-\theta t } + \kappa ) \\
    &\lesssim  \kappa ^2 e^{-\theta t / 2} \lVert  \tilde{h} \rVert_{\mathcal{G}^{s}_2}  + \kappa ^3 e^{-\theta t } \lesssim  \kappa e^{-\theta t/2}(\kappa ^2  + \lVert  \tilde{h} \rVert_{\mathcal{G}^{s}_2}^2) + \kappa ^3 e^{-\theta t } \\
    &\lesssim  \kappa e^{-\theta t / 2}\lVert  \tilde{h} \rVert_{\mathcal{G}^{s}_2} ^2  + \kappa ^3 e^{-\theta t /2}, \\
     \lvert  \langle  \mathrm{A}_{s} ( e \tilde{h}  + v h \tilde{h} + 2 v \tilde{h}) , \mathrm{A}_{s} \tilde{v} \rangle \rvert &\lesssim  \lVert \tilde{v} \rVert_{\mathcal{G}^{s}_2} \lVert  \tilde{h} \rVert_{\mathcal{G}^{s}_2} \left( \lVert  e \rVert_{\mathcal{G}^{2}_\infty} + \lVert  h \rVert_{\mathcal{G}^{2}_\infty} \lVert  v \rVert_{\mathcal{G}^{2}_\infty} + \lVert  v \rVert_{\mathcal{G}^{2}_\infty}\right) \\
     &  \qquad\qquad + \lVert  \tilde{v} \rVert_{\mathcal{G}^{s}_2} \lVert  h \rVert_{\mathcal{G}^{2}_\infty} \left( \lVert \tilde{e} \rVert_{\mathcal{G}^{s}_2} + \lVert  h \rVert_{\mathcal{G}^{2}_\infty} \lVert  \tilde{v} \rVert_{\mathcal{G}^{s}_2} + \lVert  \tilde{v} \rVert_{\mathcal{G}^{s}_2}\right) \\
     &\lesssim  (\lVert  \tilde{v} \rVert_{\mathcal{G}^{s}_2}^2 + \lVert  \tilde{h} \rVert_{\mathcal{G}^{s}_2}^2) \left( \lVert  \partial_t h \rVert_{\mathcal{G}^{2}_2} + \lVert  \partial_y h \rVert_{\mathcal{G}^{2}_2} \lVert  u \rVert_{\mathcal{G}^{3}_2} + \lVert u \rVert_{\mathcal{G}^{3}_2}\right)  \\
     &\qquad\qquad + \lVert  \tilde{v} \rVert_{\mathcal{G}^{s}_2} \kappa e^{-\theta t } \left( \kappa + \kappa e^{-\theta t } \lVert  \tilde{v} \rVert_{\mathcal{G}^{s}_2} + \lVert  \tilde{v} \rVert_{\mathcal{G}^{s}_2}\right) \\
     &\lesssim  \kappa e^{-\theta t} \left( \lVert  \tilde{v} \rVert_{\mathcal{G}^{s}_2}^2 + \lVert  \tilde{h} \rVert_{\mathcal{G}^{s}_2}^2\right) +  \kappa ^3 e^{-\theta t }. 
\end{align*}  
Combining the results, we have 
\begin{align*}
    &\frac{1}{2}\frac{\mathrm{d}}{\mathrm{d}t} \lVert  (\tilde{u}, \varepsilon \tilde{v}) \rVert_{\mathcal{G}^{s}_2}^2 + \dot{\lambda} \lVert  (\tilde{u}, \varepsilon \tilde{v}) \rVert_{\mathcal{G}^{s + \frac{1}{4}}_2}^2 + \lVert  (( 1- \theta) \tilde{u} ,(1 - \varepsilon ^2 \theta) \tilde{v} ) \rVert_{\mathcal{G}^{s}_2}^2 + \lVert  (\partial_y \tilde{u}, \sqrt{ 2} \varepsilon \partial_x \tilde{u}, \varepsilon ^2 \partial_x \tilde{v}) \rVert_{\mathcal{G}^{s}_2}^2 \\
    &\qquad\qquad \lesssim  \kappa ^3 e^{-\theta t / 2} + \kappa e^{-\theta t / 2}\lVert  (\tilde{u}, \varepsilon \tilde{v}) \rVert_{\mathcal{G}^{s + \frac{1}{4}}_2}^2 + \kappa  \lVert  (\partial_y, \varepsilon \partial_x) \tilde{u} \rVert_{\mathcal{G}^{s}_2}^2 + \kappa e^{-\theta t / 2} \lVert  \tilde{h} \rVert_{\mathcal{G}^{s}_2}^2. 
\end{align*} 
As \(\dot{\lambda} \geq 16 C \kappa e^{- \theta  t / 2}\) and \( 1 \geq 16 C \kappa  \) so we have  
       \begin{equation}\label{eq:Bootstrap on Eu for GWP}
        \frac{\mathrm{d}}{\mathrm{d}t} \mathcal{E}_{u,v} + \dot{\lambda} \lVert  (\tilde{u}, \varepsilon  \tilde{v}) \rVert_{\mathcal{G}^{s + \frac{1}{4}}_2}^2 +  \lVert (\tilde{u}, \tilde{v}) \rVert_{\mathcal{G}^{s}_2}^2 +  \lVert  (\partial_y \tilde{u}, \sqrt{ 2} \varepsilon \partial_x \tilde{u}, \varepsilon ^2 \partial_x \tilde{v}) \rVert_{\mathcal{G}^{s}_2}^2 \lesssim  \kappa ^3 e^{-\theta t / 2} + \kappa e^{-\theta t / 2} \lVert  \tilde{h} \rVert_{\mathcal{G}^{s}_2}^2. 
       \end{equation}  
\subsection{Bootstrap on \texorpdfstring{\(\mathrm{E}_u\)}{Elow}.}
Acting \( \mathrm{A}_{s-2}\) on the equation for \(u,v\), we obtain 
\begin{align*}
    &\mathrm{A}_{s-2} ( \partial _t u ) + \mathrm{A}_{s-2} u  + \mathrm{A}_{s-2} ((u,v) \cdot \nabla u) - \mathrm{A}_{s-2} (\varepsilon ^2 \partial_{xx} u  + \partial_{yy} u ) + \partial_x \mathrm{A}_{s-2} p 
    \\ & \qquad\qquad\qquad\qquad \qquad\qquad\qquad\qquad  = \mathrm{A}_{s-2} ( f + fh - uhh - 2uh) , \\ 
   & \varepsilon ^2 \left( \mathrm{A}_{s-2} ( \partial _t v ) + \mathrm{A}_{s-2} ((u,v) \cdot \nabla v) - \mathrm{A}_{s-2} (\varepsilon ^2 \partial_{xx} v + \partial_{yy} v)\right) + \mathrm{A}_{s-2} v  + \partial_y \mathrm{A}_{s-2} p  \\
  &\qquad\qquad\qquad\qquad \qquad\qquad\qquad\qquad  = - \mathrm{A}_{s-2} (e + eh + v hh + 2vh). 
\end{align*} 
Hitting the equations by \( \mathrm{A}_{s-2} (\partial_t u, \partial_t v)\), we obtain  
\begin{align*}
    &\lVert  \partial_t (u,\varepsilon v ) \rVert_{\mathcal{G}^{s-2}_2}^2 + \frac{1}{2}\frac{\mathrm{d}}{\mathrm{d}t}\left( \lVert  (u, v) \rVert_{\mathcal{G}^{s-2}_2}^2 + \lVert  (\partial_y u , 2\varepsilon \partial_x u , \varepsilon ^2 \partial_x v ) \rVert_{\mathcal{G}^{s-2}_2}^2  \right)\\
    &\qquad\qquad\quad\quad+ \dot{\lambda} \left( \lVert  (u, v) \rVert_{\mathcal{G}^{\sigma -1}_2}^2 + \lVert  (\partial_y u , 2\varepsilon \partial_x u , \varepsilon ^2 \partial_x v ) \rVert_{\mathcal{G}^{\sigma -1}_2}^2  \right) \\
    & \qquad\qquad\quad  = \langle  \mathrm{A}_{s-2} (  fh - uhh - 2uh) , \mathrm{A}_{s-2}( \partial_t u ) \rangle  - \langle   \mathrm{A}_{s-2} ( eh + vhh + 2vh) , \mathrm{A}_{s-2} ( \partial_t v) \rangle,
\end{align*} 
where we have used the divergence-free conditions on velocity and magnetic fields to remove the pressure and magnetic field terms. 
A direct calculation shows that 
\begin{align*}
    &\lvert  \langle  \mathrm{A}_{s-2} ( fh - uhh - 2uh ), \mathrm{A}_{s-2} (\partial _t u) \rangle \rvert \\
    &\lesssim  \lVert  \partial_t u  \rVert_{\mathcal{G}^{s-2}_2}  \left( \lVert  f \rVert_{\mathcal{G}^{s-2}_2} \lVert  h \rVert_{\mathcal{G}^{2}_\infty} + \lVert  f \rVert_{\mathcal{G}^{2}_\infty} \lVert  h \rVert_{\mathcal{G}^{s-2}_2}\right) \\
    & \qquad\qquad\quad + \lVert  \partial_t u  \rVert_{\mathcal{G}^{s-2}_2} \left(\lVert  u \rVert_{\mathcal{G}^{s-2}_2} \lVert  h \rVert_{\mathcal{G}^{2}_\infty}^2 + \lVert  u \rVert_{\mathcal{G}^{2}_\infty} \lVert  h \rVert_{\mathcal{G}^{2}_\infty} \lVert  h \rVert_{\mathcal{G}^{s-2}_2}\right)  \\
    & \qquad\qquad\quad + \lVert  \partial_t u \rVert_{\mathcal{G}^{s-2}_2} \left( \lVert  u \rVert_{\mathcal{G}^{s-2}_2} \lVert  h \rVert_{\mathcal{G}^{2}_{\infty}} + \lVert  u \rVert_{\mathcal{G}^{2}_\infty} \lVert  h \rVert_{\mathcal{G}^{s-2}_2}\right) \\
    &\lesssim  \lVert  \partial_t u \rVert_{\mathcal{G}^{s-2}_2} \left( \lVert  \partial_t h \rVert_{\mathcal{G}^{s-2}_2}\lVert  \partial_y h \rVert_{\mathcal{G}^{2}_2} + \lVert  \partial_t h \rVert_{\mathcal{G}^{2}_2} \lVert  h \rVert_{\mathcal{G}^{s-2}_2} + \lVert  u \rVert_{\mathcal{G}^{s-2}_2} \lVert  \partial_y h  \rVert_{\mathcal{G}^{2}_2}^2 + \lVert  u \rVert_{\mathcal{G}^{3}_2} \lVert  \partial_y h \rVert_{\mathcal{G}^{2}_2} \lVert  h \rVert_{\mathcal{G}^{s-2}_2}  \right) \\
    &\qquad\qquad + \lVert  \partial_t u \rVert_{\mathcal{G}^{s-2}_2} \left( \lVert  u \rVert_{\mathcal{G}^{s-2}_2} \lVert  \partial_y h \rVert_{\mathcal{G}^{2}_2} + \lVert  u \rVert_{\mathcal{G}^{2}_2}^{\frac{1}{2}} \lVert  \partial_y u \rVert_{\mathcal{G}^{2}_2} \lVert h \rVert_{\mathcal{G}^{s-2}_2}\right) \\
    &\leq \frac{1}{2} \lVert  \partial_t u \rVert_{\mathcal{G}^{s-2}_2}^2  +C \left( \kappa ^2 e^{-\theta t } + \kappa ^3 e^{- \theta t}  + \kappa ^2 e^{-\theta t } + \kappa ^2 e^{-\theta t }\right) ^2\leq \frac{1}{2} \lVert  \partial_t u \rVert_{\mathcal{G}^{s-2}_2}^{2}  + \kappa ^4 e^{-2\theta t }. 
 \end{align*} 
Similarly, 
\begin{align*}
     &\lvert  \langle \mathrm{A}_{s - 1} ( eh + vhh + 2vh), \mathrm{A}_{s-3} (\partial_t v) \rangle \rvert \\
     &\lesssim   \lVert  \partial_t v  \rVert_{\mathcal{G}^{s-3}_\infty} \left( \lVert  f \rVert_{\mathcal{G}^{s - 1}_2} \lVert  h \rVert_{\mathcal{G}^{2}_2} + \lVert  f \rVert_{\mathcal{G}^{2}_2} \lVert  h \rVert_{\mathcal{G}^{s - 1}_2}\right) \\
     & \qquad\qquad\quad  + \lVert  \partial_t v  \rVert_{\mathcal{G}^{s - 3}_{\infty}}\left( \lVert v \rVert_{\mathcal{G}^{s-1}_2} \lVert h \rVert_{\mathcal{G}^{2}_2} \lVert  h \rVert_{\mathcal{G}^{2}_\infty} + \lVert  h \rVert_{\mathcal{G}^{s-1}_2} \lVert  h \rVert_{\mathcal{G}^{2}_\infty} \lVert  v \rVert_{\mathcal{G}^{2}_2}\right) \\
     &\qquad\qquad\quad + \lVert  \partial_t v \rVert_{\mathcal{G}^{s - 2}_2} \left( \lVert v \rVert_{\mathcal{G}^{s-1}_2} \lVert  h \rVert_{\mathcal{G}^{2}_2} +\lVert  v \rVert_{\mathcal{G}^{2}_2} \lVert  h \rVert_{\mathcal{G}^{s-1}_2}\right) \\
     &\lesssim  \lVert  \partial_t u \rVert_{\mathcal{G}^{s-2}_2} \left( \lVert \partial_t h \rVert_{\mathcal{G}^{s-2}_2} \lVert  h \rVert_{\mathcal{G}^{2}_2} + \lVert  \partial_t h \rVert_{\mathcal{G}^{2}_2} \lVert  h \rVert_{\mathcal{G}^{s}_2} + \lVert  v \rVert_{\mathcal{G}^{s}_2} \lVert  h  \rVert_{\mathcal{G}^{2}_2} \lVert  \partial_y h \rVert_{\mathcal{G}^{2}_2} + \lVert  u \rVert_{\mathcal{G}^{3}_2} \lVert  \partial_y h \rVert_{\mathcal{G}^{2}_2} \lVert  h \rVert_{\mathcal{G}^{s}_2} \right)  \\
     &\qquad\qquad\quad + \lVert \partial_t u  \rVert_{\mathcal{G}^{s-2}_2}\left(   \lVert  v \rVert_{\mathcal{G}^{s}_2} \lVert  h \rVert_{\mathcal{G}^{2}_2} + \lVert u \rVert_{\mathcal{G}^{3}_2} \lVert h \rVert_{\mathcal{G}^{s}_2}\right) \\
     &\lesssim \lVert  \partial_t u \rVert_{\mathcal{G}^{s-2}_2} \left(\kappa ^2e^{-\theta t } + \kappa e^{-\theta t} \lVert  \tilde{h} \rVert_{\mathcal{G}^{s}_2} + \kappa ^2 \lVert  \tilde{v} \rVert_{\mathcal{G}^{s}_2} + \kappa ^2 e^{-\theta t} \lVert  \tilde{h} \rVert_{\mathcal{G}^{s}_2} + \kappa \lVert  \tilde{v} \rVert_{\mathcal{G}^{s}_2} + \kappa e^{-\theta t } \lVert  \tilde{h} \rVert_{\mathcal{G}^{s}_2}   \right) \\
     &\leq \frac{1}{2} \lVert  \partial_tu \rVert_{\mathcal{G}^{s-2}_2}^2  +C \kappa ^3 e^{-\theta t } + C \kappa ^2 e^{-2 \theta t} \lVert  \tilde{h} \rVert_{\mathcal{G}^{s}_2}^2  + C \kappa ^2 \lVert \tilde{v} \rVert_{\mathcal{G}^{s}_2}^2 
\end{align*} 
We have thus obtained  
    \begin{align}\label{eq:Bootstrap for Elow for GWP}
        \begin{split} 
            \lVert  \partial_t (u,\varepsilon v) \rVert_{\mathcal{G}^{s-2}_2}^2 + \frac{\mathrm{d}}{\mathrm{d}t}  \mathrm{E}_{u,v}  +& \dot{\lambda} \lVert  (u,v, \partial_y u , \varepsilon \partial_x u , \varepsilon ^2 \partial_x v) \rVert_{\mathcal{G}^{\sigma -1}_2}^2 \\
            &\qquad  \lesssim  \kappa ^3 e^{-\theta t} + \kappa ^2  e^{-2 \theta t } \lVert  \tilde{h} \rVert_{\mathcal{G}^{s}_2}^2 + \kappa ^2 \lVert \tilde{v} \rVert_{\mathcal{G}^{s}_2}^2 . 
        \end{split}
    \end{align}    
\subsection{Bootstrap on \texorpdfstring{\(\mathrm{E}_h\)}{Elowh}.}
Acting \(\mathrm{A}_{s-1}\) on the equation of \(h\), we have 
\begin{align*}
     \partial_t [ \mathrm{A}_{ s-1} \partial_t  h] + \dot{\lambda}^{\frac{1}{2}} \langle  \partial_x  \rangle^{\frac{1}{2}} \partial_t h + \mathrm{A}_{s-1} \partial_t h  - \mathrm{A}_{s-1} (\varepsilon ^2 \partial_{xx} h + \partial_{yy} h ) + \mathrm{A}_{s-1} ( (u,v) \cdot \nabla h) = 0 . 
\end{align*} 
Taking the inner-product with \(\mathrm{A}_{s-1} \partial_t h\), we have 
\[ \frac{1}{2}\frac{\mathrm{d}}{\mathrm{d}t} \lVert  (\partial_t ,\varepsilon \partial_x, \partial_y ) h \rVert_{\mathcal{G}^{s-1}_2}^2 + \dot{\lambda}  \lVert (\partial_t, \varepsilon \partial_x, \partial_y ) h \rVert_{\mathcal{G}^{\sigma}_2}^2    + \lVert  \partial_t h \rVert_{\mathcal{G}^{s-1}_2}^2  = - \langle  \mathrm{A}_{s-1} ((u,v) \cdot \nabla h) , \mathrm{A}_{s-1} [\partial_t h] \rangle. 
\] 
We directly obtain 
\begin{align*}
    & \lvert \langle  \mathrm{A}_{s-1} ((u,v) \cdot \nabla h) , \mathrm{A}_{s-1} [ \partial_t h ] \rangle \rvert \lesssim  \lVert  \partial_t h \rVert_{\mathcal{G}^{s-1}_2} \left( \lVert  u \rVert_{\mathcal{G}^{s-1}_2} \lVert  \partial_y h \rVert_{\mathcal{G}^{3}_2} + \lVert  u \rVert_{\mathcal{G}^{2}_\infty} \lVert    h  \rVert_{\mathcal{G}^{s }_2}   \right)   \\
    & \qquad\qquad\quad + \lVert  \partial_t h \rVert_{\mathcal{G}^{s-1}_2} \left( \lVert  v \rVert_{\mathcal{G}^{s-1}_\infty} \lVert  \partial_y h \rVert_{\mathcal{G}^{2}_2} +  \lVert  v \rVert_{\mathcal{G}^{2}_\infty} \lVert\partial_y  h \rVert_{\mathcal{G}^{s-1}_2}\right)  \\
     &\lesssim  \lVert  \partial_t h \rVert_{\mathcal{G}^{s-1}_2} \left(\kappa ^2 e^{- \theta t} + \kappa e^{-\theta t} \lVert  \tilde{h} \rVert_{\mathcal{G}^{s}_2} + \kappa ^2 e^{-\theta t}\right)  \leq \frac{1}{2} \lVert  \partial_t h \rVert_{\mathcal{G}^{s-1}_2}^2 + C\kappa ^3 e^{-\theta t} + C \kappa ^2 e^{ - \theta t } \lVert  \tilde{h} \rVert_{\mathcal{G}^{s}_2}^2
\end{align*} 
We conclude     
        \begin{equation}\label{eq:Bootstrap on Elowh for GWP}
            \frac{1}{2}\frac{\mathrm{d}}{\mathrm{d}t} \mathrm{E}_h + \dot{\lambda} \lVert  (\partial_t, \varepsilon \partial_x, \partial_y) h \rVert_{\mathcal{G}^{\sigma}_2}^2 + \lVert  \partial_t h \rVert_{\mathcal{G}^{ s-1}_2}^2 \lesssim  \kappa ^3 e^{-\theta t } + \kappa e^{-\theta t } \lVert  \tilde{h} \rVert_{\mathcal{G}^{s}_2}^2.  
        \end{equation}  
\subsection{Conclusion}
Combining \eqref{eq:Bootstrap on Eh for GWP}, \eqref{eq:Bootstrap on Eu for GWP}, \eqref{eq:Bootstrap for Elow for GWP} and \eqref{eq:Bootstrap on Elowh for GWP}, we obtain 
\begin{align*}
    \frac{\mathrm{d}}{\mathrm{d}t} \left(  \mathrm{E}_{u,v} + \mathrm{E}_h  + \mathcal{E}_h + \mathcal{E}_{u,v} \right)& + \mathcal{CK}_h +  \dot{\lambda} \lVert  (\tilde{u}, \varepsilon \tilde{v}) \rVert_{\mathcal{G}^{s + \frac{1}{4}}_2}^2 + \lVert  (\tilde{u}, \tilde{v}) \rVert_{\mathcal{G}^{s}_2}^2 + \lVert \partial_y \tilde{u} \rVert_{\mathcal{G}^{s}_2}^2  \\
    & \lesssim  \kappa ^3 e^{-\theta t /2} +\kappa e^{- \theta t/2} \lVert  \tilde{u} \rVert_{\mathcal{G}^{s + \frac{1}{4}}_2}^2  +\kappa e^{-\theta t / 2} \lVert  \tilde{h} \rVert_{\mathcal{G}^{s}_2}^2 + \kappa ^2 \lVert  \tilde{v} \rVert_{\mathcal{G}^{s}_2}^2
\end{align*} 
As we are requiring \(\dot{\lambda}^3 \land \dot{\lambda}  \geq 16 C \kappa e^{-\theta t /2 }\) and \(\kappa \ll  1\) so we obtain 
\begin{equation}\label{eq:Bootstrap Inequality for GWP}
    \frac{\mathrm{d}}{\mathrm{d}t} \left(  \mathrm{E}_{u,v} +\mathrm{E}_h + \mathcal{E}_h + \mathcal{E}_{u,v}\right)  + \mathcal{CK}_h + \frac{\dot{\lambda}}{2} \lVert  (\tilde{u}, \varepsilon \tilde{v}) \rVert_{\mathcal{G}^{s + \frac{1}{4}}_2}^2 + \frac{1}{2} \lVert  (\tilde{u} , \tilde{v}) \rVert_{\mathcal{G}^{s}_2}^2  \lesssim  \kappa ^3 e^{-\theta t / 2} . 
\end{equation}  
For the construction of \(\lambda\), recall that we required 
\[  \ddot{\lambda} \leq 0 \leq \lambda \leq \frac{\delta _0}{2} , \quad \dot{\lambda}^3 \land \dot{\lambda} \geq 16 C \kappa e^{-\theta t / 2} , \quad \dot{\lambda} \in L^1.  
\] 
As \(C \kappa \ll  1\), it is sufficient to relax the second condition to \( \dot{\lambda} \geq 16 e^{-\theta t / 6}\). We choose 
\begin{equation}\label{eq:Definition of λ̇ for GWP}
    \dot{\lambda} (t)= 16 e^{- \frac{\theta t}{3\delta _0}}    \quad \text{for }t \geq 0 . 
\end{equation}   

To conclude this bootstrap argument, let 
\begin{equation}\label{eq:Definition of Thash for GWP}
    T^\# :=\sup \,   \{t \geq 0 : \mathcal{E}_{u,v} + \mathcal{E}_h + \mathrm{E}_{u,v} + \mathcal{E}_h \leq 3C \kappa ^2 \} 
\end{equation}  
Integrating \eqref{eq:Bootstrap Inequality for GWP} implies that for any \(t \in [0,T^\star] : \)  
\begin{align*}
    \lVert (u,v) \rVert_{\mathcal{G}^{s - 1}_2}^2 + \mathrm{E}_{u,v} + \mathrm{E}_h + \mathcal{E}_h + \mathcal{E}_{u,v} &\lesssim \lVert (\partial_y u_{\mathrm{in}}, v_{\mathrm{in}} , \partial_t h_{\mathrm{in}} , \partial_y h_{\mathrm{in}} ) \rVert_{\mathcal{G}^{s}_2}^2  + \frac{2\kappa ^3}{\theta}(1 - e^{ - T^\# \theta / 2}) \\
    &\lesssim \lVert (\partial_y u_{\mathrm{in}}, \partial_x u_{\mathrm{in}}, \partial_y e_{\mathrm{in}} , \partial_y f_{\mathrm{in}} , \partial_y h_{\mathrm{in}} ) \rVert_{\mathcal{G}^{s}_2}^2  + \frac{2\kappa ^3}{\theta} \\
    &\leq  C \kappa ^2 + \frac{2C}{\theta}\kappa ^3,
\end{align*} 
where we have used the equation for \( h\) and Proposition \ref{prop:Elliptic Estimates} to control \( \lVert  \partial_t h_{\mathrm{in}} \rVert_{\mathcal{G}^{s}_2}^2\).   
Letting \(\kappa _0 \leq \frac{\theta}{2}\), we obtain for \(t \in [0,T^\#] : \) 
\begin{equation}\label{eq:Bootstrapped step for GWP}
    \mathcal{E}_{u,v} + \mathcal{E}_h + \mathrm{E}_{u,v} + \mathcal{E}_h \leq 2C \kappa ^2 . 
\end{equation} 
Thus, using a continuous argument, \eqref{eq:Definition of Thash for GWP} and \eqref{eq:Bootstrapped step for GWP} imply that \(T^\#  = \infty\) giving us global well-posedness of the system.

\section{Ill-posedness in Gevrey-\texorpdfstring{\(2+\)}{2+} class}
 In this section, we will show some evidence for the ill-posedness of the system \eqref{eq:System for (u,h)} in \( \mathcal{G}^{2+}\) by showing the existence of solutions that grows like \( e^{\sqrt{k}t}\) for frequency \(k\) in \(x\) for the linearised system for the perturbed magnetic field \(h\). To that end, recall the linearised system for \(h\)
 \begin{equation}
     \begin{cases}  
        \partial_{tt} h + \partial_t h - \partial_{yy} h +y(1 - y) \partial_x h  = 0 ,   \\
        h|_{y = 0,1}  = \partial_t h |_{y = 0,1}= 0 , \\ h|_{t = 0 } = \zeta, \quad \partial_t h|_{ t =0} = \zeta ^1  
     \end{cases}  
 \end{equation}  
 where \( \zeta, \zeta ^1\) are small initial data to be prescribed. For some fixed parameter \(k < 0\) which corresponds to the frequency in \(x\), we split \( h := h^1_k + h^2_k\) where \(h^{1}_k\) solves the nonhomogeneous equation for \(h\) with initial data and \(h^2_k\) solves the inhomogeneous equation for \(h\) with zero initial data where the forcing comes from the boundary correction. To that end, we will first obtain solutions for the homogeneous equation without any initial data: 
 \[  \partial_{tt} \mathcal{Q} + \partial_t \mathcal{Q} - \partial_{yy} \mathcal{Q} + y (1 - y) \partial_x \mathcal{Q} = 0 
 \] 
Taking the Laplace transform in \(t\) and Fourier transform in \(x\), we have  
 \begin{equation}\label{eq:ill-posedness:Equation for Q}
    (c^2 + c + ik y (1 - y) - \partial_{yy}) \hat{\mathcal{Q}} = 0 . 
 \end{equation}  
 Then this is an eigenvalue problem for the complex harmonic oscillator \cite{helffer2013spectral} under a linear change of coordinate (as \(k < 0\)) so we have the precise formulas:  
 \begin{equation}\label{eq:ill-posedness:Equation for c}
    c = - \frac{1}{2}  + \sqrt{1  + 4 i  \lvert k \rvert - 4 \sqrt{ \lvert k \rvert} e^{i \frac{\pi}{4}}} ,\quad \mathcal{\hat{Q}}  = f_k(y) , \quad f_k (y) := m_0 e^{ - \frac{\sqrt{i  \lvert k \rvert}}{2}(y - \frac{1}{2})^2},
 \end{equation} 
 where \(m_0\) is some normalisation constant and we have chosen the \(c\) with positive real part (provided \( \lvert k \rvert\) is large). To account for the boundary data, noting the symmetry at \(y = \frac{1}{2}\), we will set 
 \[  h^1_k := \mathbb{R}{\rm e\,}  \lvert k  \rvert ^{-\frac{5}{8}} e^{i(kx + c_i t ) + c_r t } (f_k(y) - f_k(0)) , \quad \zeta := h^1_k(0,x,y)  , \quad \zeta ^1 := \partial_t h^1_k(0,x,y) . 
 \] 
 where \(c_r  := \mathbb{R}{\rm e\,} c\) and \( c_i : = \mathrm{Im\,} c\). 

 Thus initial data is of constant size:   
 \begin{align*}
    \lVert (\zeta, \zeta ^1 )\rVert_{L^2 H^1_y}^2& \lesssim 1 +    \lvert k \rvert^{ - \frac{5}{4}}   \lVert c e ^{ - \frac{\sqrt{ \lvert k \rvert}}{2 \sqrt{2}}(y  - \frac{1}{2})^2} \rVert_{H^1_y}^2 \lesssim 1 +  \lvert k \rvert^{\frac{3}{4}} \int_{0}^{1} (y - \tfrac{1}{2})^2 e^{-\frac{\sqrt{k}}{\sqrt{2}} ( y -     \frac{1}{2})^2} \mathrm{\,d} y   = O(1). 
 \end{align*} 
 Here recall that \( a_k , b_k\) in Theorem \ref{thm:Results:ill-posedness for the Linear system for h} is given simply by the expansion of \(h^1_k(0,x,y)\) in \(\cos kx\) and \(\sin kx\).  
 We can now obtain the equation for \(h^2_k = h - h^1_k : \)
 \begin{equation}\label{eq:ill-posedness:System for h2}
     \begin{cases} \partial_{tt} h_k^2 + \partial_t h_k^2 - \partial_{yy} h_k^2 + y(1-y) \partial_x h_k^2 = \mathscr{F}, \\
    h_k^2|_{ t = 0 } =  h_k^2|_{y = 0,1}  = \partial _t h ^2_k|_{t = 0,1} = 0.  \end{cases}  
 \end{equation} 
 where \( \mathscr{F} = \mathbb{R}{\rm e\,}  \lvert k \rvert^{-\frac{5}{8}} (c^2 + c + ik y(1 - y)) e^{ct + ikx} f_k(0).\)
 As the forcing is small in Gevrey-2 for small time:
 \[   \lvert \mathscr{F} \rvert \lesssim \lvert k \rvert^{\frac{3}{8}} e^{c_r t  - \frac{\sqrt{ \lvert k \rvert}}{8 \sqrt{ 2}}} \lesssim  e^{c_r t - \theta _1 \sqrt{  \lvert k \rvert}}
 \]   
we expect that \(h^2_k\) remains Gevrey-2 locally for \(\theta _1  < \frac{1}{16 \sqrt{ 2}}. \) This is precised by the following proposition: 
 \begin{proposition}\label{prop:ill-posedness:Global bound on h2}
    { There exists a universal constant \(\delta\) such that for any \(\kappa > 0\) and any \(k \in \mathbb{Z}   \)}, there exists a \(T_0\) independent of \(k\) such that  
    \[    \lVert  e^{\frac{\delta }{2}  \sqrt{ \lvert k \rvert}} h^2_k \rVert_{L^2}  \lesssim  \kappa, \quad \forall  t \in [0,T_0).
    \]  
 \end{proposition}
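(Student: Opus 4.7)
I would reduce the problem to a family of 1D ODEs and carry out a weighted energy estimate whose central feature is to absorb the order-$|k|$ transport coefficient $iky(1-y)$ at the cost of only $\sqrt{|k|}$ growth, which matches the sharp Gevrey-$2$ rate predicted by the WKB analysis of Remark \ref{rmk:Results:WKB Method}.

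First, I will note that $\mathscr{F}$ carries only the single $x$-frequency $k$, so $h_k^2$ does too; setting $H(t,y) := \mathcal{F}_x h_k^2(t,\cdot,y)(k)$ collapses \eqref{eq:ill-posedness:System for h2} to the scalar ODE
\begin{equation*}
\partial_{tt} H + \partial_t H - \partial_{yy} H + iky(1-y)H = \widehat{\mathscr{F}}, \quad H|_{t=0}=\partial_t H|_{t=0}=H|_{y=0,1}=0,
\end{equation*}
where $\|\widehat{\mathscr{F}}(t,\cdot)\|_{L^2_y} \lesssim e^{c_r t - \theta_1\sqrt{|k|}}$ with $c_r = \mathrm{Re}\,c \lesssim \sqrt{|k|}$. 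Since the Gevrey weight $e^{\tfrac{\delta}{2}\sqrt{|k|}}$ acts as a scalar on a single frequency, it suffices to produce a bound of the shape $\|H(t)\|_{L^2_y} \lesssim \kappa\, e^{-\tfrac{\delta}{2}\sqrt{|k|}}$.

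The key technical step is the choice of energy. The naive $\tfrac12(\|\partial_t H\|^2 + \|\partial_y H\|^2)$ fails: pairing the equation with $\overline{\partial_t H}$ leaves the transport term $-k\!\int y(1-y)\,\mathrm{Im}(H\,\overline{\partial_t H})\,dy$, which by Cauchy-Schwarz naively forces a growth of order $|k|$. I will instead work with
\begin{equation*}
E_0(t) := \tfrac12\bigl(\|\partial_t H\|_{L^2_y}^2 + \|\partial_y H\|_{L^2_y}^2 + |k|\,\|H\|_{L^2_y}^2\bigr),
\end{equation*}
which dominates both $\|\partial_t H\|^2$ and $|k|\|H\|^2$. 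A careful AM-GM balancing then gives $|k|\,\|H\|\,\|\partial_t H\| \lesssim \sqrt{|k|}\,E_0$, and the auxiliary term $|k|\,\partial_t\|H\|^2 = 2|k|\,\mathrm{Re}\!\int H\,\overline{\partial_t H}$ obeys the same estimate. Combining with Young's inequality on the forcing yields
\begin{equation*}
\dot E_0 \le C\sqrt{|k|}\,E_0 + \frac{1}{\sqrt{|k|}}\,\|\widehat{\mathscr{F}}\|_{L^2_y}^2
\end{equation*}
for a universal $C$ independent of $k$. Gronwall, $E_0(0)=0$, and $c_r \lesssim \sqrt{|k|}$ then give
\begin{equation*}
E_0(t) \lesssim e^{C\sqrt{|k|}t}\int_0^t \frac{e^{2c_r s - 2\theta_1\sqrt{|k|}}}{\sqrt{|k|}}\,ds \lesssim \frac{e^{C'\sqrt{|k|}t - 2\theta_1\sqrt{|k|}}}{|k|},
\end{equation*}
and I will then choose $T_0$ small enough (depending on $\kappa$, $\delta$, $\theta_1$, but independent of $k$) so that $C'T_0 + \delta \le \theta_1$. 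Since $\|H(t)\|_{L^2_y}^2 \le E_0(t)/|k|$, multiplication by $e^{\delta\sqrt{|k|}/2}$ leaves an exponentially small factor $e^{-\tfrac{1}{2}(\theta_1-\delta)\sqrt{|k|}}/|k|$, which is bounded by $\kappa$ for all $k$ once $T_0$ is further shrunk to control the residual polynomial factor.

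The principal obstacle is the delicate energy design above: the $|k|\,\|H\|^2$ contribution must appear with precisely the right coefficient so that the transport term, whose symbol grows like $|k|$, only forces growth of order $\sqrt{|k|}$ — exactly the rate at which the homogeneous mode $e^{ct}f_k$ of Section~5 grows, and hence the sharp rate compatible with the Gevrey-$2$ weight $e^{\tfrac{\delta}{2}\sqrt{|k|}}$.
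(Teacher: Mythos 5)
Your argument is correct, and it reaches the same conclusion by a more elementary route than the paper. The reduction to the single frequency $k$ and the treatment of the forcing (smallness $e^{c_r t-\theta_1\sqrt{|k|}}$, Duhamel/Gronwall, $c_r\lesssim\sqrt{|k|}$, then shrinking $T_0$ to beat the residual factors) are the same as in the paper. The difference is in how the $O(|k|)$ transport term $iky(1-y)H$ is absorbed. The paper keeps the PDE energy formalism: it applies the time-dependent multiplier $\mathrm{A}_0=e^{(\delta-\lambda(t))\sqrt{|k|}}$ with $\dot{\lambda}$ a large constant, so that differentiating the weight produces Cauchy--Kowalevskaya terms $\dot{\lambda}\lVert\partial_t\tilde h\rVert_{\mathcal{G}^{1/4}_2}^2$ and (via Lemma \ref{lem:|∂ₜh|ₛ expansion}) $\dot{\lambda}^3\lVert\tilde h\rVert_{\mathcal{G}^{3/4}_2}^2$; the transport term is then split as $|k|=|k|^{3/4}\cdot|k|^{1/4}$ and absorbed into these. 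You instead keep a fixed weight, put the $|k|$-dependence into the energy $E_0$ through the term $|k|\lVert H\rVert^2$, split $|k|=|k|^{1/2}\cdot|k|^{1/2}$, and let Gronwall produce the admissible growth $e^{C\sqrt{|k|}t}$ explicitly. These are two implementations of the same cancellation: the paper's linear loss of Gevrey radius $\lambda(t)=32Ct$ is exactly your Gronwall exponent. What your version buys is transparency and the avoidance of the $\partial_t$-versus-weight commutator bookkeeping of Lemma \ref{lem:|∂ₜh|ₛ expansion}; what the paper's version buys is a formalism that carries over verbatim to the nonlinear, all-frequency estimates of Sections 2--4, where a genuinely $k$-dependent modified energy would be awkward. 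One small point of care in writing it up: state explicitly that $E_0$ also controls $\lVert\partial_y H\rVert_{L^2_y}$, since the conclusion is ultimately used in the $L^2_xH^1_y$ norm in the proof of Theorem \ref{thm:Results:ill-posedness for the Linear system for h}; your energy does contain $\lVert\partial_y H\rVert^2$, so this is immediate.
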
 
 \begin{proof}[Proof of Proposition \ref{prop:ill-posedness:Global bound on h2}] 
    Fix \(k \in \mathbb{Z}\) and let \[   \tilde{h} :=  h^2_k, \quad \mathrm{A}_{s} := e^{(\delta - \lambda(t))\sqrt{ \lvert k \rvert} }  \lvert k \rvert^s. \] Then we have   
    \begin{align*}
        \mathrm{A}_0 (\tilde{g}) &=   \mathrm{A}_{0} [\partial_{tt} \tilde{h}] + \mathrm{A}_0 [\partial_t \tilde{h}]   - \partial_{yy} \mathrm{A}_0 \tilde{h} + y(1 - y) \mathrm{A}_0 [\partial_x \tilde{h}], \\
        &= \partial_t (\mathrm{A}_0 [\partial_t \tilde{h}]) + \dot{\lambda}  \lvert k \rvert^{\frac{1}{2}} \mathrm{A}_0 [ \partial_t \tilde{h}] +   \mathrm{A}_0 [ \partial_t \tilde{h}] + \mathrm{A}_0  \tilde{h} - \partial_{yy} \mathrm{A}_0 \tilde{h}+ y(1 - y) \mathrm{A}_0 [ \partial_x \tilde{h}]. 
    \end{align*}
    Inner-producting with \( \mathrm{A}_2 [ \partial_t \tilde{h}]\), we have 
    \begin{align*}
        \frac{\mathrm{d}}{\mathrm{d}t} \lVert  (\partial_t,   \partial_y  ) \tilde{h}  \rVert_{\mathcal{G}^{0}_2}^2 + \dot{\lambda} \lVert  \partial_t \tilde{h} \rVert_{\mathcal{G}^{ \frac{1}{4}}_2}^2& +   \lVert  \partial_t \tilde{h} \rVert_{\mathcal{G}^{0}_2}^2 = -    \langle  y(1 - y) \mathrm{A}_0 [ \partial_x \tilde{h}] , \mathrm{A}_0 [ \partial_t \tilde{h}] \rangle  +    \langle  \mathrm{A}_0 \mathscr{F}, \mathrm{A}_0 [\partial_t \tilde{h}] \rangle. 
    \end{align*} 
    Here, we write 
    \[  \lVert  \phi \rVert_{\mathcal{G}^{s}_2} := \lVert  e^{(\delta  - \lambda(t))  \sqrt{ \lvert k \rvert}}  \lvert k \rvert^s \phi(t,x,y) \rVert_{L^2_y}.  
    \] 
    Using the usual expansion, we have 
    \[  \dot{\lambda} \lVert  \partial_t \tilde{h} \rVert_{\mathcal{G}^{  \frac{1}{4}}}^2 = \dot{\lambda} \lVert  \partial_t \mathrm{A}_{2 + \frac{1}{4}} \tilde{h} \rVert_{L^2}^2 + \frac{\mathrm{d}}{\mathrm{d}t} \left(\dot{\lambda}^2 \lVert  \tilde{h} \rVert_{\mathcal{G}^{  \frac{1}{2}}_2}^2\right)  - 2 \ddot{\lambda} \dot{\lambda} \lVert  \tilde{h} \rVert_{\mathcal{G}^{ \frac{1}{2}}_2}^2 + \dot{\lambda}^3 \lVert  \tilde{h} \rVert_{\mathcal{G}^{ \frac{3}{4}}_2}^2.  
    \] 
    This gives 
    \begin{align*}
        \frac{\mathrm{d}}{\mathrm{d}t} \lVert  (\partial_t , \partial_y ) \tilde{h}  \rVert_{\mathcal{G}^0_2}^2 +\frac{1}{2} \frac{\mathrm{d}}{\mathrm{d}t} \lVert  \dot{\lambda} \tilde{h} \rVert_{\mathcal{G}^{\frac{1}{2}}_2}^2  &+ \frac{\dot{\lambda} }{2}\lVert  \partial_t \tilde{h} \rVert_{\mathcal{G}^{ \frac{1}{4}}_2}^2 +  \lVert  \partial_t \tilde{h} \rVert_{\mathcal{G}^0_2}^2 + \frac{\dot{\lambda}^3}{2} \lVert \tilde{h} \rVert_{\mathcal{G}^{ \frac{3}{4}}_2}^2 + \frac{\dot{\lambda}}{2} \lVert  \partial_t [\mathrm{A}_{ \frac{1}{4}} \tilde{h}] \rVert_{L^2}^2   \\
        &= 2 \ddot{\lambda}\dot{\lambda} \lVert  \tilde{h} \rVert_{\mathcal{G}^{ \frac{1}{2}}_2}^2 -    \langle  y(1 - y) \mathrm{A}_0 [ \partial_x \tilde{h}] , \mathrm{A}_0 [ \partial_t \tilde{h}] \rangle  +    \langle  \mathrm{A}_0 \tilde{g} , \mathrm{A}_0 [\partial_t \tilde{h}] \rangle. 
    \end{align*} 
    Requiring \(\ddot{\lambda} \leq 0 \leq \dot{\lambda}\), it suffices to just bound the latter two terms on right. For them, we have 
   \begin{align*}
         \lvert  \langle  y(1 - y) \mathrm{A}_0 [ \partial_x \tilde{h}] , \mathrm{A}_0 [ \partial_t \tilde{h}]  \rangle \rvert &\leq  \lvert  \langle  \mathrm{A}_{-\frac{1}{4}} [ \partial_x \tilde{h}] , \mathrm{A}_{\frac{1}{4}} [ \partial_t \tilde{h}] \rangle \rvert  \lesssim { \lVert   \tilde{h} \rVert_{\mathcal{G}^{\frac{3}{4}}_2}^2 }  +   \lVert  \partial_t \tilde{h} \rVert_{\mathcal{G}^{\frac{1}{4}}_2}^2, 
    \end{align*}   
    and for the other term \(g\), we have 
    \begin{align*}
         \lvert  \langle \mathrm{A}_0\mathscr{F}, \mathrm{A}_0 [\partial_t \tilde{h}]  \rangle \rvert &\leq  \lvert k  \rvert\lVert  \mathscr{F}\rVert_{\mathcal{G}^{0}_\infty} \lVert  \partial_t \tilde{h} \rVert_{\mathcal{G}^{0}_2} \lesssim   e^{ c_r  t  + (\delta- \lambda(t) - \frac{\theta_1}{2}) \sqrt{k }}    \lVert  \partial_t \tilde{h} \rVert_{\mathcal{G}^{0}_2}. 
    \end{align*} 
    For \(\dot{\lambda} \geq 32 C  \), we get 
    \[  \frac{\mathrm{d}}{\mathrm{d}t} \lVert  \partial_t \tilde{h} \rVert_{\mathcal{G}^{0}_2}^2 + \frac{\dot{\lambda}}{32}\lVert  \partial_t \tilde{h} \rVert_{\mathcal{G}^{\frac{1}{4}}_2} ^2 + \dot{\lambda}^3 \lVert  \tilde{h} \rVert_{\mathcal{G}^{\frac{3}{4}}_2}^2 \lesssim   e^{2 c_r  t  + (\delta- \lambda(t) - \theta_1) \sqrt{k }}    \lVert  \partial_t \tilde{h} \rVert_{\mathcal{G}^{0}_2}. 
    \]  
    {Fixing 
        \[  \delta = \frac{\theta_1}{4}, \quad \dot{\lambda} = 32 C , \quad  T_\# \text{ small enough such that }  \frac{2 c_r }{\sqrt{k}} T_\#  - \dot{\lambda} T _\#  \leq (2 \sqrt{2}  - 32 C ) T_\#  =   \frac{\theta_1}{8}. 
        \] 
        where we have used 
        \[  c_r \leq \sqrt{2 \sqrt{ 2k}} \, \mathbb{R}{\rm e\,} \sqrt{i \sqrt{ 2  \lvert k \rvert }}    = \sqrt{2  \lvert k \rvert}. 
        \] } 
    Then for any \(t \leq  T_\#: \) 
    \[  \frac{\mathrm{d}}{\mathrm{d}t} \lVert  ( \partial_t, \partial_y) h^2_k  \rVert_{\mathcal{G}^{0}_2}^2 + \frac{\dot{\lambda}}{32} \lVert  \partial_t h^2_k \rVert_{\mathcal{G}^{\frac{1}{4}}_2}^2 + \dot{\lambda}^3 \lVert  h^2_k \rVert_{\mathcal{G}^{\frac{3}{4}}_2}^2  \lesssim  e^{\sqrt{k} ( \frac{\theta_1}{8} + \frac{\theta_1}{4} -  \frac{\theta_1}{2}) } \lVert  \partial _t h^2_k \rVert_{\mathcal{G}^{0}_2}^2 \lesssim e^{-\frac{\theta_1}{8} \sqrt{k}} \lVert  \partial_t h^2_k \rVert_{\mathcal{G}^{0}_2}^2. 
    \]  
    So via a continuity argument, for any \(\kappa > 0\) there exists \( T_0 \leq T_\# \) such that for all \( t \leq T_0 : \)
    \[ \lVert  (\partial_t , 1) h^2_k \rVert_{\mathcal{G}^{0}_2}^2 \leq    \lVert  (\partial_t, \partial _y) h^2_k \rVert_{\mathcal{G}^{0}_2} \lesssim  \kappa ^2. \qedhere
    \] 
 \end{proof}  
 Now let us move onto proving the growth bounds for \(h^1_k:\)
\begin{proposition}\label{prop:ill-posedness:Growth of h1k}
    There exists \(M > 0\) such that for any \( \lvert k \rvert \geq M \) there exists some constant \( m_0 > 0 \)
    \[ \lVert  h^1_k \rVert_{H^1_y} \geq m_0  \lvert k \rvert^{-\frac{1}{2}} e^{  \sqrt{k} t}. 
    \] 
\end{proposition}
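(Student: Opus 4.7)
The plan is to exploit the explicit formula for $h_k^1$ so that the bound reduces to (i) a sharp asymptotic for the real part $c_r$ of the eigenvalue $c$ defined in \eqref{eq:ill-posedness:Equation for c}, and (ii) a pair of elementary Gaussian integral estimates for $\|f_k - f_k(0)\|_{H^1_y}$.

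\textbf{Step 1 (decoupling $t$, $x$, $y$).} Writing
\[
    h^1_k(t,x,y)  = |k|^{-5/8} e^{c_r t}\Bigl[\cos(kx+c_i t)\,\mathrm{Re}(f_k(y)-f_k(0)) -\sin(kx+c_i t)\,\mathrm{Im}(f_k(y)-f_k(0))\Bigr],
\]
orthogonality of $\cos(kx+c_i t)$ and $\sin(kx+c_i t)$ in $L^2(\mathbb{T})$ gives
\[
\|h^1_k(t)\|_{L^2_x H^1_y}^{2} = \pi\, |k|^{-5/4} e^{2 c_r t}\, \|f_k - f_k(0)\|_{H^1_y}^{2}.
\]
Thus the proposition is equivalent to the two inequalities $c_r \geq \sqrt{|k|}$ and $\|f_k-f_k(0)\|_{H^1_y}\gtrsim |k|^{1/8}$, provided $|k|\geq M$ for some large $M$.

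\textbf{Step 2 (asymptotics of $c_r$).} For large $|k|$, expand
\[
    1 + 4i|k| - 4\sqrt{|k|}\,e^{i\pi/4} = -2\sqrt{2|k|} + 4i|k| + O(1),
\]
so its modulus is $\sim 4|k|$ and its argument equals $\pi/2 + (2|k|)^{-1/2} + O(|k|^{-1})$. Taking the principal square root and the real part,
\[
    c_r = -\tfrac12 + 2\sqrt{|k|}\,\cos\!\bigl(\tfrac{\pi}{4} + \tfrac{1}{2\sqrt{2|k|}}\bigr) + O(|k|^{-1/2}) = \sqrt{2|k|} - 1 + O(|k|^{-1/2}),
\]
so that $c_r \geq \sqrt{|k|}$ once $|k|$ is large enough.

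\textbf{Step 3 (lower bound on $\|f_k-f_k(0)\|_{H^1_y}$).} Since $\mathrm{Re}(\sqrt{i|k|}) = |k|^{1/2}/\sqrt{2}$, the Gaussian $|f_k(y)|^{2} = e^{-|k|^{1/2}(y-1/2)^2/\sqrt{2}}$ has sharp concentration of width $|k|^{-1/4}$ at $y=1/2$. A standard computation (substitute $u = |k|^{1/4}(y - 1/2)$) yields
\[
    \|f_k\|_{L^2_y}^{2} \sim c_1 |k|^{-1/4},\qquad \|\partial_y f_k\|_{L^2_y}^{2} = |k|\int_0^{1}(y-\tfrac12)^2 e^{-|k|^{1/2}(y-1/2)^2/\sqrt{2}}dy \sim c_2 |k|^{1/4},
\]
and hence $\|f_k\|_{H^1_y}\geq c_3 |k|^{1/8}$. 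Meanwhile, $|f_k(0)| = e^{-|k|^{1/2}/(8\sqrt{2})}$ is exponentially small in $\sqrt{|k|}$, so by the triangle inequality
\[
    \|f_k - f_k(0)\|_{H^1_y} \geq \|f_k\|_{H^1_y} - \|f_k(0)\|_{H^1_y} \geq c_3 |k|^{1/8} - e^{-|k|^{1/2}/(8\sqrt{2})} \geq \tfrac{c_3}{2}|k|^{1/8}
\]
for $|k|\geq M$ after possibly enlarging $M$.

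\textbf{Step 4 (assembly).} Inserting the bounds from Steps 2 and 3 back into the identity of Step 1,
\[
    \|h^1_k(t)\|_{L^2_x H^1_y} \geq m_0\, |k|^{-5/8} \cdot |k|^{1/8} \cdot e^{c_r t} \geq m_0 |k|^{-1/2} e^{\sqrt{|k|}\,t},
\]
which is the claimed growth. The only mildly delicate point is the expansion of the complex square root in Step~2, which has to be carried to the order producing $c_r \geq \sqrt{|k|}$ (rather than merely $c_r \gtrsim \sqrt{|k|}$); everything else is either algebra or a one-variable Gaussian integral.
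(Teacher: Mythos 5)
Your proof is correct and follows the same overall skeleton as the paper's (explicit solution, lower bound on the $y$-norm of the profile, asymptotics of $c_r$), but the two middle steps are executed by genuinely different means, and yours are cleaner. For the norm, the paper expands $|h^1_k|^2$ pointwise, keeps the cross term between $f_k(y)$ and the boundary corrector $f_k(0)$, and restricts the $y$-integration to a window of width $|k|^{-1/4}$ around $y=\tfrac12$ to prevent cancellation; you instead use exact orthogonality of $\cos(kx+c_it)$ and $\sin(kx+c_it)$ on $\mathbb{T}$ to factor the $x$-integral out completely, reducing everything to $\lVert f_k-f_k(0)\rVert_{H^1_y}$, which you then bound below by the triangle inequality since $|f_k(0)|=e^{-\sqrt{|k|}/(8\sqrt2)}$ is exponentially small. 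This buys you an identity rather than a one-sided estimate and removes the "make sure the real part doesn't cancel" bookkeeping entirely. For $c_r$, you do a direct modulus/argument expansion of the square root in \eqref{eq:ill-posedness:Equation for c}, which is more transparent than the chain of inequalities in \eqref{eq:ill-posedness:Lower bound for cr}. One cosmetic inaccuracy: in Step 2 the imaginary part of $1+4i|k|-4\sqrt{|k|}e^{i\pi/4}$ carries a $-2\sqrt{2|k|}$ correction, so the error you write as $O(1)$ is really $O(\sqrt{|k|})$, and the constant term in $c_r$ comes out as $-\tfrac32$ rather than $-1$; since only the leading order $c_r=\sqrt{2|k|}(1+o(1))$ is needed to conclude $c_r\geq\sqrt{|k|}$ for large $|k|$, this does not affect the argument.
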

    
     \begin{proof} 
        To obtain the lower bound, we need to make sure the real part doesn't cancel out due to multiplication of different factor, we will make a very precise calculation; to that end, 
        \begin{align*}
            \mathbb{R}{\rm e\,} e^{c_r t + i(kx+ c_i t)} f_k(y)&=m_0 \mathbb{R}{\rm e\,} e^{c_r t + i (kx + c_i t) - \frac{\sqrt{ik}}{2}(y - \frac{1}{2})^2} \\
            &= m_0 e^{c_r t  - \frac{\sqrt{k}}{2 \sqrt{ 2}}(y - \frac{1}{2})^2} \cos  \left( kx + c_i t  - \tfrac{\sqrt{k}}{2 \sqrt{ 2}}(y - \tfrac{1}{2})^2\right) \\
            \mathbb{R}{\rm e\,} e^{ct + ikx} f_k(0 ) & = m_0 e^{c_r t  - \frac{\sqrt{k}}{8 \sqrt{ 2}} } \cos  \left( kx  + c_i t    - \tfrac{\sqrt{k}}{8 \sqrt{ 2}}\right). 
        \end{align*} 
        This implies for \(\Theta :=c_i t - \tfrac{\sqrt{k}}{8 \sqrt{ 2}} \) and \(\Theta _y := c_i t - \tfrac{\sqrt{k}}{2 \sqrt{2}}(y - \tfrac{1}{2})^2\), we have 
        \begin{align*}
           \lvert k \rvert^{\frac{5}{4}}  \lVert  h^1_k \rVert_{L^2_{x,y}}^2 &= 2m_0^2 e^{2 c_r t} \int_{0}^{\frac{1}{2}}\int_{\mathbb{T}} e^{- \sqrt{\frac{k}{2}}(y - \frac{1}{2})^2} \cos ^2(kx + \Theta _y)  + e^{ - \frac{\sqrt{k}}{4 \sqrt{ 2}}}  \cos ^2 (kx + \Theta)  \mathrm{\,d}x \mathrm{\,d} y \\
             & \qquad - 4 m_0^2  e^{2 c_r t } \int_{0}^{\frac{1}{2}}\int_{\mathbb{T}}  e^{- \frac{\sqrt{k}}{8 \sqrt{ 2}} - \frac{\sqrt{k}}{2 \sqrt{2}} ( y - \frac{1}{2})^2 } \cos (kx + \Theta _y)\cos (kx + \Theta ) \mathrm{\,d} x \mathrm{\,d} y , \\
             &\geq 2 m_0^2 e^{2 c_r t} \pi \int_{0}^{\frac{1}{ \lvert k \rvert^{\frac{1}{4}}}} \left( e^{ -\frac{\sqrt{k}}{2 \sqrt{ 2}} \xi^2} - e^{ - \frac{\sqrt{k}}{8 \sqrt{ 2}}} \right)^2 \mathrm{\,d} \xi  \geq m_0^2  \lvert k \rvert^{-\frac{1}{4}} e^{2c_r t }. 
        \end{align*}  
        Similarly, we can also compute the norm for the derivative: We have 
        \begin{align*}
              &\partial_y (\mathbb{R}{\rm e\,} e^{ct + ikx} f_k(y) - f_k(0))  = m_0 \sqrt{ \tfrac{k}{2}}( \tfrac{1}{2} - y) \left( \mathbb{R}{\rm e\,} e^{ct + ikx + \frac{\sqrt{ik}}{2}(y - \frac{1}{2})^2} -  \mathrm{Im\,} e^{ct + ikx + \frac{\sqrt{ik}}{2}(y - \frac{1}{2})^2} \right) \\
              &= m_0 \sqrt{\tfrac{k}{2}}( \tfrac{1}{2} - y)  e^{2c_r t - \frac{\sqrt{k}}{2 \sqrt{ 2}}(y - \frac{1}{2})^2} \left( \cos  (kx  + \Theta _y) -\sin  (kx  + \Theta _y) \right). 
        \end{align*} 
        which implies 
        \begin{align*}
            \lvert k \rvert^{\frac{5}{4}}  \lVert  \partial_y h^1_k \rVert_{L^2_{x,y}}^2 &= m_0^2  \lvert k \rvert e^{2c_r t}\int_{0}^{\frac{1}{2}} (\tfrac{1}{2} - y)^2 e^{- \sqrt{\frac{k}{2}}(y - \frac{1}{2})^2}   \int_{\mathbb{T}} \left( \cos  (kx  + \Theta _y) -\sin  (kx  + \Theta _y) \right)^2 \mathrm{\,d} x  \mathrm{\,d} y \\
             &=2 \pi  m_0^2  \lvert k \rvert e^{2 c_r t } \int_{0}^{\frac{1}{2  \lvert k \rvert^{\frac{1}{4}}}} (y  - \tfrac{1}{2})^2  e^{ - \frac{\sqrt{k}}{2 \sqrt{ 2}}(y - \frac{1}{2})^2} \mathrm{\,d} y \geq C^2  \lvert k \rvert e^{2 c_r t}  \lvert k \rvert^{ - \frac{3}{4}} = C^2  \lvert k \rvert^{\frac{1}{4}} e^{2 c_r t}. 
        \end{align*}  
        Combining the above two estimates, we obtain a growth result for \(h^1_k: \)
        \[  \lVert  h^1_k \rVert_{L^2_x H^1_y}\geq  m_0  \lvert k \rvert^{-\frac{1}{2}}  e^{c_r  t} \geq  m_0  \lvert k \rvert^{-\frac{1}{2}}  e^{\sqrt{k}  t}
        \] 
        where we have used  \begin{equation}\label{eq:ill-posedness:Lower bound for cr}
            c_r \geq -\sqrt{2 \sqrt{  \lvert k \rvert}}\,  \mathrm{Im\,} \sqrt{1 + i (1 - \sqrt{ 2  \lvert k \rvert })} \geq -\sqrt{ 2 \sqrt{  \lvert k \rvert}}\,  \mathrm{Im\,} \sqrt{ -i \sqrt{ 2  \lvert k \rvert}}   \geq \sqrt{  \lvert k \rvert }. 
        \end{equation} 
        as \(c\) is given by the formula \eqref{eq:ill-posedness:Equation for c}
      \end{proof}   
      Then Proposition \ref{prop:ill-posedness:Global bound on h2} and Proposition \ref{prop:ill-posedness:Growth of h1k} immediately implies Theorem \ref{thm:Results:ill-posedness for the Linear system for h}. 
      \begin{proof}[Proof of Theorem \ref{thm:Results:ill-posedness for the Linear system for h}]
        Fix \( s < \frac{1}{2}\) and \( C_0 > 0\). Then using \eqref{eq:ill-posedness:Lower bound for cr}, we have for \(t \geq T_k := 2  C_0  \lvert k \rvert^{s - \frac{1}{2}} ,  \) 
        \[  \lVert  e^{ -2 C_0  \lvert k \rvert^s} h^1_k \rVert_{L^2_x H^1_y}(t)  \geq m_0   \lvert k \rvert^{-\frac{1}{2}} e^{\sqrt{ \lvert k \rvert}T_k -2 C_0  \lvert k \rvert^s} \geq m_0  \lvert k \rvert^{-\frac{1}{2}} .  
        \]  
        As \( h_k = h^1_k + h^2_k\), where \( \lVert  e^{\frac{\delta}{2} \sqrt{k}} h^2_k \rVert_{L^2_x H^1_y} \lesssim  \kappa \) due to Proposition \mbox{\ref{prop:ill-posedness:Global bound on h2}}, we have for \( C_0 > 0 , \)  
        \[  \lVert  h_k \rVert_{L^2_x H^1_y} \geq \lVert  h^1_k  \rVert_{L^2_x H^1_y} - \lVert  h^2_k  \rVert_{L^2_x H^1_y} \geq m_0 \lvert k \rvert^{ - \frac{1}{2}}  e^{2 C_0  \lvert k \rvert^s}   - C \kappa e^{ - \frac{\delta}{2} \sqrt{  \lvert k \rvert}} \geq \frac{ 1}{2} C_0^{\frac{1}{2} - s} m_0  e^{C_0  \lvert k \rvert^s} 
        \] 
        provided \(\kappa \ll \frac{m_0}{C}\) and \( t \in [T_k, T_0)\).
      \end{proof}

\section{Appendix: Nonlinear estimates and Gevrey spaces}



We present some of the inequalities which we have routinely used in the above calculations. 

We fix \( X := \mathbb{R} \times  [0,1]\) and \( \mathrm{A}_s := e^{\delta \sqrt{ \langle \partial_x \rangle}} \langle \partial_x \rangle^s\) for some fixed \(\delta > 0 \) and \(s\) to be specified later. In the proofs, we will routinely omit absolute values in the integrands for ease of readability. 

\begin{proposition}\label{prop:Elliptic Estimates}
    Let \(\phi , \psi \in H^1(X)\) satisfying
    \[  \begin{cases} \partial_x \phi + \partial_y \psi  = 0 , \qquad  \partial_y \phi - \partial_x \psi = \omega, \\ (\partial_y\phi,  \psi)|_{y = 0,1} = 0, \end{cases}   
    \] 
    where \(\omega \in \mathcal{G}^{s}_2(X) \). Then we have 
    \[   \lVert  (\phi , \psi) \rVert_{\mathcal{G}^{s}_2} \lesssim  \lVert  \omega \rVert_{\mathcal{G}^{s-1}_2}, \quad \text{and} \quad \lVert  \partial_y (\phi, \psi) \rVert_{\mathcal{G}^{s}_2} \lesssim  \lVert  \omega \rVert_{\mathcal{G}^{s}_2}. 
    \] 
\end{proposition}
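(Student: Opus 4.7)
The plan is to diagonalize the system by taking the Fourier transform in $x$, which reduces the div--curl problem to a family (parameterized by the frequency $\xi$) of second-order ODEs in $y$. Writing $\hat\phi(\xi,y)$ for the partial Fourier transform in $x$, the hypotheses become $i\xi\hat\phi+\partial_y\hat\psi=0$ and $\partial_y\hat\phi-i\xi\hat\psi=\hat\omega$, with $\hat\psi|_{y=0,1}=0$ and $\partial_y\hat\phi|_{y=0,1}=0$. Eliminating $\hat\phi$ by differentiating the first relation in $y$ and substituting the second yields the Dirichlet problem
\[
-\partial_{yy}\hat\psi+\xi^2\hat\psi=i\xi\hat\omega,\qquad \hat\psi(\xi,0)=\hat\psi(\xi,1)=0,
\]
from which $\hat\phi$ is then recovered through the two original first-order identities.

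Testing this ODE against $\overline{\hat\psi}$, integrating by parts, and combining Cauchy--Schwarz with the Poincaré inequality $\|\hat\psi\|_{L^2_y}\lesssim\|\partial_y\hat\psi\|_{L^2_y}$ (available thanks to the Dirichlet data), I expect to obtain
\[
\langle\xi\rangle^2\|\hat\psi\|_{L^2_y}^2+\|\partial_y\hat\psi\|_{L^2_y}^2\lesssim\|\hat\omega\|_{L^2_y}^2.
\]
For $\hat\phi$ I would split into two frequency regimes. When $|\xi|\ge 1$, the algebraic identity $i\xi\hat\phi=-\partial_y\hat\psi$ directly gives $|\xi|\|\hat\phi\|_{L^2_y}\lesssim\|\hat\omega\|_{L^2_y}$, which is the desired $\langle\xi\rangle$-weighted bound in this range. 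When $|\xi|<1$, I would integrate the relation $i\xi\hat\phi+\partial_y\hat\psi=0$ in $y$ and use the Dirichlet data on $\hat\psi$ to deduce that the $y$-mean of $\hat\phi(\xi,\cdot)$ vanishes for almost every $\xi\neq 0$; Poincaré--Wirtinger combined with $\partial_y\hat\phi=i\xi\hat\psi+\hat\omega$ then gives $\|\hat\phi\|_{L^2_y}\lesssim\|\partial_y\hat\phi\|_{L^2_y}\lesssim\|\hat\omega\|_{L^2_y}$, which is what is needed since $\langle\xi\rangle\sim 1$ here. The same identity $\partial_y\hat\phi=i\xi\hat\psi+\hat\omega$, combined with the previously obtained bound on $\hat\psi$, simultaneously controls $\|\partial_y\hat\phi\|_{L^2_y}$ by $\|\hat\omega\|_{L^2_y}$ uniformly in $\xi$.

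To conclude, I would multiply the frequency-wise inequalities $\langle\xi\rangle\|(\hat\phi,\hat\psi)(\xi,\cdot)\|_{L^2_y}\lesssim\|\hat\omega(\xi,\cdot)\|_{L^2_y}$ and $\|\partial_y(\hat\phi,\hat\psi)(\xi,\cdot)\|_{L^2_y}\lesssim\|\hat\omega(\xi,\cdot)\|_{L^2_y}$ by the Fourier multipliers $e^{\delta\langle\xi\rangle^{1/2}}\langle\xi\rangle^{s-1}$ and $e^{\delta\langle\xi\rangle^{1/2}}\langle\xi\rangle^s$, respectively, and integrate in $\xi$; Plancherel in $x$ then delivers the two claimed Gevrey bounds. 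The main obstacle I anticipate is precisely the low-frequency regime for $\hat\phi$: the algebraic relation $i\xi\hat\phi=-\partial_y\hat\psi$ degenerates at $\xi=0$ and the Neumann boundary data for $\hat\phi$ precludes a direct Poincaré estimate, so one must extract the zero $y$-mean of $\hat\phi$ from the divergence-free structure before Poincaré--Wirtinger can be applied.
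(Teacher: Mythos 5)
Your proposal is correct, and at its core it is the same mechanism as the paper's: Fourier transform in $x$, a one-dimensional elliptic energy estimate in $y$, and a frequency-by-frequency multiplication by the Gevrey weight. The organization differs, though. The paper introduces the potential $F(x,y)=\int_0^y\phi\,\mathrm{d}y'$, so that $\nabla^\perp F=(\phi,\psi)$ and $\Delta F=\omega$, and proves $\lVert(|\partial_x|,\partial_y)\nabla^\perp F\rVert\lesssim\lVert\Delta F\rVert$ by the integration-by-parts identity for $\lVert D^2F\rVert^2$; you instead eliminate $\hat\phi$ to obtain the scalar Dirichlet problem $-\partial_{yy}\hat\psi+\xi^2\hat\psi=i\xi\hat\omega$ (which is exactly the paper's equation for $\hat F$ multiplied by $-i\xi$) and recover $\hat\phi$ from the first-order relations. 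The genuine added value of your version is the low-frequency treatment of $\hat\phi$: you observe that integrating $i\xi\hat\phi+\partial_y\hat\psi=0$ in $y$ together with the Dirichlet data on $\hat\psi$ forces $\int_0^1\hat\phi(\xi,\cdot)\,\mathrm{d}y=0$ for a.e.\ $\xi\neq0$, so Poincar\'e--Wirtinger applies despite the Neumann data on $\phi$. The paper's chain of inequalities uses the bound $\lVert(\phi,\psi)\rVert_{\mathcal{G}^{s-1}_2}\lesssim\lVert(|\partial_x|,\partial_y)(\phi,\psi)\rVert_{\mathcal{G}^{s-1}_2}$ without comment, and for the $\phi$ component that step requires precisely the mean-zero observation you supply; so your write-up makes explicit a point the paper leaves implicit. (Both arguments rely on $x\in\mathbb{R}$ so that $\{\xi=0\}$ is a null set; on $\mathbb{T}\times[0,1]$ the zero mode of $\phi$ would genuinely obstruct the first estimate.)
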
 
\begin{proof} 
    Define a potential function 
    \[  F(x,y) := \int_{0}^y \phi (x, y') \mathrm{\,d} y. 
    \] 
    Then \(F\) satisfies the following properties: 
    \[  \nabla^\perp F = (\phi, \psi ) , \quad \Delta F  =\omega , \quad F(x,0) =\partial_{yy} F|_{y = 0,1} = \partial_x F|_{y = 0,1}   = 0.  
    \] 
    We now prove \(\lVert  ( \lvert \partial_x \rvert, \partial_y) \nabla^\perp F \rVert_{L^2} \lesssim  \lVert  \Delta F \rVert_{L^2} \) as the same strategy will follow when \(L^2\) is replaced by \( \mathcal{G}^{s}_2\). To that end, we have 
    \begin{align*}
        \lVert  ( \lvert \xi \rvert , y) \widehat{\nabla^\perp F} \rVert_{L^2} &= \int  \lvert \xi ^2 \hat{F} \rvert^2 + 2  \lvert  \xi \partial_y \hat{F} \rvert^2 +  \lvert \partial_{yy} \hat{F} \rvert^2 \lesssim   \lvert \xi \rvert^2 \langle  \partial_y \hat{F} , \partial_y \hat{F} \rangle + \lVert  \Delta F \rVert_{L^2}^2 \\
        &\lesssim   \lvert \xi \rvert^2 \langle  \partial_{yy} \hat{F} , \hat{F} \rangle + \lVert  \Delta F \rVert_{L^2}^2 \lesssim  \lVert  \Delta F \rVert_{L^2}^2. 
    \end{align*} 
    Finally, we conclude 
    \begin{align*}
        \lVert  (\phi , \psi) \rVert_{\mathcal{G}^{s}_2} &= \lVert  \langle  \partial_x \rangle (\phi , \psi) \rVert_{\mathcal{G}^{s-1}_2} \lesssim  \lVert  (\phi, \psi) \rVert_{\mathcal{G}^{s-1}_2}  + \lVert   \lvert  \partial_x  \rvert ( \phi , \psi) \rVert_{\mathcal{G}^{s-1}_2} \lesssim  \lVert ( \lvert \partial_x \rvert , \partial_y) (\phi, \psi) \rVert_{\mathcal{G}^{s-1}_2} \\
        &\lesssim  \lVert  ( \lvert \partial_x \rvert, \partial_y) \nabla^\perp  F \rVert_{\mathcal{G}^{s-1}_2} \lesssim  \lVert  \Delta F \rVert_{\mathcal{G}^{s-1}_2}  = \lVert  \omega \rVert_{\mathcal{G}^{s-1}_2} . \qedhere
    \end{align*} 
\end{proof}   
\begin{proposition}\label{prop:Gevrey-Holder}
    It holds that for any \(2 \leq p,q \leq \infty,  1 \leq \mu \leq s , 0 \leq \delta :  \)
    \[  \lVert fg \rVert_{\mathcal{G}_2^{\delta;s }}  \lesssim  \lVert  f \rVert_{\mathcal{G}_p^{ \delta;s  }} \lVert  g \rVert_{\mathcal{G}_{p'}^{  \delta ; \mu}} + \lVert g\rVert_{\mathcal{G}_q^{   \delta;s }} \lVert  f \rVert_{\mathcal{G}_{q'}^{    \delta ; \mu }}
    \]  
    where \(f \in \mathcal{G}^{ \delta;s }_{p} (X) \cap \mathcal{G}^{  \delta ; \mu}_{q'}(X), g \in \mathcal{G}^{\delta ;\mu  }_{p'} (X) \cap \mathcal{G}^{  \delta ; s}_{q}(X)\) and \(\frac{1}{p} + \frac{1}{p'} = \frac{1}{2} = \frac{1}{q} + \frac{1}{q'}\).
\end{proposition}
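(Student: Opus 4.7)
\textbf{Proof proposal for Proposition \ref{prop:Gevrey-Holder}.} The plan is to reduce the mixed-norm bilinear estimate to a pointwise-in-$y$ Fourier-analytic estimate, and then close the $y$-integration by Hölder's inequality. For fixed $y \in [0,1]$, write $\widehat{fg}(\xi,y) = (\widehat{f}(\cdot,y) \ast \widehat{g}(\cdot,y))(\xi)$, and apply the two key pointwise inequalities on the weights: the concavity/subadditivity
\[
\langle \xi \rangle^{1/2} \;\leq\; \bigl(\langle \eta \rangle + \langle \xi - \eta \rangle\bigr)^{1/2} \;\leq\; \langle \eta \rangle^{1/2} + \langle \xi - \eta \rangle^{1/2},
\]
which one checks from $\langle \xi \rangle \leq \langle \eta \rangle + \langle \xi - \eta \rangle$ (a direct expansion using $|\eta||\xi-\eta|\leq \langle\eta\rangle\langle\xi-\eta\rangle$), together with the standard
\[
\langle \xi \rangle^s \;\lesssim\; \langle \eta \rangle^s + \langle \xi - \eta \rangle^s, \qquad s \geq 0.
\]
These two bounds give, pointwise in $\xi$ and $y$,
\[
e^{\delta \langle \xi \rangle^{1/2}} \langle \xi \rangle^s |\widehat{fg}(\xi,y)| \;\lesssim\; (F_s \ast G_0)(\xi,y) + (F_0 \ast G_s)(\xi,y),
\]
where $F_\tau(\eta,y) := e^{\delta \langle \eta \rangle^{1/2}} \langle \eta \rangle^\tau |\widehat{f}(\eta,y)|$ and similarly $G_\tau$ for $g$.

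Next I would take the $L^2_\xi$ norm of each piece and apply Young's convolution inequality $L^2_\xi \ast L^1_\xi \hookrightarrow L^2_\xi$, putting the higher-regularity factor in $L^2$ and the lower-regularity factor in $L^1$. For the first piece this yields
\[
\|F_s \ast G_0\|_{L^2_\xi} \;\leq\; \|F_s(\cdot,y)\|_{L^2_\xi}\,\|G_0(\cdot,y)\|_{L^1_\xi}.
\]
The $L^1_\xi$ factor is handled by Cauchy--Schwarz against the summable weight $\langle \xi \rangle^{-\mu}$: since $\mu \geq 1 > 1/2$,
\[
\|G_0(\cdot,y)\|_{L^1_\xi} \;\leq\; \|\langle \xi \rangle^{-\mu}\|_{L^2_\xi}\,\|G_\mu(\cdot,y)\|_{L^2_\xi} \;\lesssim\; \|g(\cdot,y)\|_{\mathcal{G}^{\delta;\mu}_2(y)},
\]
and by Plancherel $\|F_s(\cdot,y)\|_{L^2_\xi} = \|f(\cdot,y)\|_{\mathcal{G}^{\delta;s}_2(y)}$. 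The second piece is handled symmetrically. This gives the pointwise-in-$y$ bound
\[
\|fg\|_{\mathcal{G}^{\delta;s}_2(y)} \;\lesssim\; \|f\|_{\mathcal{G}^{\delta;s}_2(y)}\|g\|_{\mathcal{G}^{\delta;\mu}_2(y)} + \|g\|_{\mathcal{G}^{\delta;s}_2(y)}\|f\|_{\mathcal{G}^{\delta;\mu}_2(y)}.
\]

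Finally, I take $L^2_y$ of both sides and apply Hölder's inequality on $[0,1]$ to each term separately. For the first term I use the conjugate pair $(p,p')$ with $\frac{1}{p}+\frac{1}{p'}=\frac12$, obtaining $\|f\|_{\mathcal{G}^{\delta;s}_p}\|g\|_{\mathcal{G}^{\delta;\mu}_{p'}}$; for the second I use $(q,q')$, obtaining $\|g\|_{\mathcal{G}^{\delta;s}_q}\|f\|_{\mathcal{G}^{\delta;\mu}_{q'}}$. Summing yields the claimed inequality. There is no substantial obstacle: the only structural input is that the Gevrey-$2$ weight $e^{\delta\langle\xi\rangle^{1/2}}$ is subadditive (which is what distinguishes the Gevrey-$2$ algebra from Gevrey-$\sigma$ with $\sigma<2$), and the constraint $\mu \geq 1 > 1/2$ is exactly what makes $\|\langle\xi\rangle^{-\mu}\|_{L^2_\xi}$ finite, i.e.\ what replaces the Sobolev embedding $H^\mu_x \hookrightarrow L^\infty_x$ in the one-dimensional $x$-variable.
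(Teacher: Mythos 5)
Your proof is correct and follows essentially the same route as the paper's: the subadditivity of the Gevrey weight $e^{\delta\langle\xi\rangle^{1/2}}\langle\xi\rangle^s \lesssim e^{\delta\langle\eta\rangle^{1/2}}e^{\delta\langle\xi-\eta\rangle^{1/2}}(\langle\eta\rangle^s+\langle\xi-\eta\rangle^s)$, Young's convolution inequality with Cauchy--Schwarz against the summable weight to control the low-regularity factor in $L^1_\xi$, and then H\"older in $y$ with the conjugate pairs $(p,p')$ and $(q,q')$. The only cosmetic difference is that the paper fixes the low-regularity index at $1$ rather than carrying $\mu$ through, which is equivalent since $\mu\geq 1$.
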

\begin{proof} 
    The lemma follows directly from the fact that 
    \[  e^{\delta \langle \xi \rangle^{\frac{1}{2}}} \langle  \xi \rangle^s \lesssim e^{\delta \langle \eta \rangle^{\frac{1}{2}}}   e^{\delta\langle \eta - \xi \rangle^{\frac{1}{2}}} (\langle \eta \rangle^s + \langle  \xi - \eta \rangle^s)
    \] 
    and Holder's inequality. Indeed 
    \begin{align*}
       \lVert  fg \rVert_{\mathcal{G}_2^{  \delta ;s}} & = \lVert  e^{\delta\langle \xi \rangle^\frac{1}{2}} \langle \xi \rangle^s \widehat{fg} \rVert_{L^2 L^2} \lesssim \lVert  \int_{\mathbb{R}} \hat{f}(\eta,y) \hat{g}(\xi - \eta, y) e^{\delta \langle \xi \rangle^\frac{1}{2}} \langle \xi \rangle^s  \rVert_{L^2 L^2} \\
        &\lesssim  \lVert  e^{\delta \langle\,\cdot\, \rangle^\frac{1}{2}} \langle\,\cdot\, \rangle^s \hat{f} \star_\xi e^{\delta \langle\,\cdot\,\rangle^\frac{1}{2}}   \hat{g}   \rVert_{L^2 L^2} +
         \lVert  e^{\delta\langle\,\cdot\, \rangle^\frac{1}{2}} \hat{f} \star_\xi e^{\delta \langle\,\cdot\,\rangle^\frac{1}{2}} \langle\,\cdot\, \rangle^s \hat{g}   \rVert_{L^2 L^2}    \\
         &\lesssim \left\lVert  \lVert  e^{\delta \langle\,\cdot\, \rangle^\frac{1}{2}} \langle\,\cdot\, \rangle^s \hat{f} \rVert_{L^2} \lVert  e^{\delta \langle\,\cdot\, \rangle^\frac{1}{2}} \langle\,\cdot\, \rangle^1 \hat{g} \rVert_{L^2} \right\rVert_{L^2}  +  \left\lVert  \lVert  e^{\delta \langle\,\cdot\, \rangle^\frac{1}{2}} \langle\,\cdot\, \rangle^1 \hat{f} \rVert_{L^2} \lVert  e^{\delta \langle\,\cdot\, \rangle^\frac{1}{2}} \langle\,\cdot\, \rangle^s \hat{g} \rVert_{L^2} \right\rVert_{L^2}  \\
         &\lesssim \lVert  f \rVert_{\mathcal{G}_p^{  \delta ;  s  }} \lVert  g \rVert_{\mathcal{G}_{p'}^{  \delta ; 1  }} + \lVert g\rVert_{\mathcal{G}_q^{  \delta ; s   }} \lVert  f \rVert_{\mathcal{G}_{q'}^{  \delta ;  1 }},
    \end{align*}  
    where \( (f \star _\xi g)(\xi, y) := \int_{\mathbb{R}} f(\xi - \eta, y) g(\eta, y ) \mathrm{\,d} \eta.  \)
\end{proof} 
\begin{corollary}\label{cor:Gevrey-Holder with δ}
    For \( 2  \leq p,q \leq \infty \) and \(0 \leq \mu \leq 1 \leq s\), we have 
    \[   \lvert \langle  \mathrm{A}_s(f \partial_i g) , \mathrm{A}_s h \rangle \rvert  \lesssim \lVert  h \rVert_{\mathcal{G}^{s+ \mu}_2} \left( \lVert  f \rVert_{\mathcal{G}^{s - \mu}_p} \lVert \partial_i g   \rVert_{\mathcal{G}^{2}_{p'}} + \lVert  f \rVert_{\mathcal{G}^{2}_{q}} \lVert  \partial_i g \rVert_{\mathcal{G}^{s - \mu}_{q'}}\right) 
    \] 
    where \(i \in \{x,y\}\) and \( \frac{1}{p} + \frac{1}{p'} = \frac{1}{q} + \frac{1}{q'} = \frac{1}{2}\). 
\end{corollary}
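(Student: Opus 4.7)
The plan is to dualize the $\mu$-shift onto the test function $h$ using Plancherel and Cauchy--Schwarz, and then invoke (a variant of) the bilinear Gevrey--Hölder estimate of Proposition~\ref{prop:Gevrey-Holder} on the remaining product $f\,\partial_i g$ at the lower regularity level $\mathcal{G}^{s-\mu}_2$.

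Concretely, the first step would be as follows. Writing $\widehat{\cdot}$ for the Fourier transform in $x$, the definition of $\mathrm{A}_s$ gives, by Plancherel in $x$,
\[
\langle \mathrm{A}_s(f\partial_i g),\mathrm{A}_s h\rangle
= \int\!\!\int e^{2\delta\langle\xi\rangle^{1/2}}\langle\xi\rangle^{2s}\,\widehat{f\partial_i g}(\xi,y)\,\overline{\hat h(\xi,y)}\,d\xi\,dy.
\]
I would then split the weight symmetrically as $\langle\xi\rangle^{2s}=\langle\xi\rangle^{s-\mu}\langle\xi\rangle^{s+\mu}$ and $e^{2\delta\langle\xi\rangle^{1/2}}=e^{\delta\langle\xi\rangle^{1/2}}\cdot e^{\delta\langle\xi\rangle^{1/2}}$ and apply Cauchy--Schwarz in $(\xi,y)$ to obtain the clean bound
\[
|\langle \mathrm{A}_s(f\partial_i g),\mathrm{A}_s h\rangle|
\;\leq\;
\|f\,\partial_i g\|_{\mathcal{G}^{s-\mu}_2}\,\|h\|_{\mathcal{G}^{s+\mu}_2}.
\]
This step is the conceptual core: the allowed $\mu$-shift comes from pairing $h$ at regularity $s+\mu$ with the product at regularity $s-\mu$, which is a loss-free trade since $A_s$ is a real, self-adjoint Fourier multiplier.

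The second step is to estimate $\|f\partial_i g\|_{\mathcal{G}^{s-\mu}_2}$ by a bilinear Gevrey--Hölder inequality of the form
\[
\|f\partial_i g\|_{\mathcal{G}^{s-\mu}_2}
\;\lesssim\;
\|f\|_{\mathcal{G}^{s-\mu}_p}\|\partial_i g\|_{\mathcal{G}^2_{p'}}
+\|f\|_{\mathcal{G}^2_q}\|\partial_i g\|_{\mathcal{G}^{s-\mu}_{q'}},
\]
which, combined with the first step, gives precisely the claimed inequality. This is essentially Proposition~\ref{prop:Gevrey-Holder} applied at Sobolev level $s_{\mathrm{prop}}=s-\mu$ with low-regularity index $\mu_{\mathrm{prop}}=2$. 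The proof mirrors the one of Proposition~\ref{prop:Gevrey-Holder}: expand $\widehat{f\partial_i g}$ as a convolution in $\xi$, use $e^{\delta\langle\xi\rangle^{1/2}}\lesssim e^{\delta\langle\xi-\eta\rangle^{1/2}}e^{\delta\langle\eta\rangle^{1/2}}$, split $\langle\xi\rangle^{s-\mu}\lesssim \langle\xi-\eta\rangle^{s-\mu}+\langle\eta\rangle^{s-\mu}$, and apply Young's convolution inequality $L^2_\xi\!\star L^1_\xi\hookrightarrow L^2_\xi$ after Cauchy--Schwarz on the $L^1_\xi$-factor with the weight $\langle\eta\rangle^{-2}\in L^2_\eta$; finally, Hölder in $y$ with exponents $(p,p')$ (resp.\ $(q,q')$) assembles the two mixed-norm terms.

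The main technical wrinkle -- and arguably the only point that requires care -- is that the stated hypothesis of Proposition~\ref{prop:Gevrey-Holder} is $1\leq \mu_{\mathrm{prop}}\leq s_{\mathrm{prop}}$, whereas here we would need $2\leq s-\mu$, and the Corollary only assumes $\mu\leq 1\leq s$, so $s-\mu$ may be as small as $0$. However, an inspection of the proof of Proposition~\ref{prop:Gevrey-Holder} shows that only $s-\mu\geq 0$ is truly needed: the distribution inequality $\langle\xi\rangle^{s-\mu}\lesssim \langle\xi-\eta\rangle^{s-\mu}+\langle\eta\rangle^{s-\mu}$ holds for all nonnegative exponents, and the Cauchy--Schwarz trick $\int e^{\delta\langle\eta\rangle^{1/2}}|\hat{G}(\eta)|\,d\eta\leq \|\langle\eta\rangle^{-2}\|_{L^2}\|\langle\eta\rangle^2 e^{\delta\langle\eta\rangle^{1/2}}\hat G\|_{L^2}$ uses the fixed weight $\langle\eta\rangle^{-2}\in L^2_\eta$, independently of $s-\mu$. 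So the bilinear estimate above holds in the required range, and the two steps combine to close the proof. Note that whether $i=x$ or $i=y$ plays no role: the derivative $\partial_i$ is treated as part of $g$, and the Fourier argument is carried out only in the $x$-variable.
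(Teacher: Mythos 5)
Your proposal is correct, and it is essentially the derivation the paper leaves implicit: the corollary is stated without proof as a consequence of Proposition \ref{prop:Gevrey-Holder}, and your two steps (self-adjointness of $\mathrm{A}_s$ plus Cauchy--Schwarz to trade $\langle\xi\rangle^{2s}$ for $\langle\xi\rangle^{s-\mu}\langle\xi\rangle^{s+\mu}$, then the bilinear Gevrey--H\"older bound on $\|f\,\partial_i g\|_{\mathcal{G}^{s-\mu}_2}$) are exactly the intended route. You are also right that Proposition \ref{prop:Gevrey-Holder} cannot be quoted verbatim when $s-\mu<1$ (or $s-\mu<2$ for the low index $2$), and your fix --- rerunning its proof, which only uses $\langle\xi\rangle^{r}\lesssim\langle\xi-\eta\rangle^{r}+\langle\eta\rangle^{r}$ for $r\ge 0$ together with the integrable weight $\langle\eta\rangle^{-2}$ --- is the correct way to cover the full range $0\le\mu\le 1\le s$.
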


\begin{proposition}[Estimate on \(\langle \mathrm{A}_s(g \nabla \phi) , \mathrm{A}_s \phi \rangle\)]\label{prop:Estimate on〈Aₛ(g∇ϕ),Aₛϕ〉}
    Fix \(s \geq 1\) and let \( g, \phi \in C^\infty(X)\) such that \(g, \phi|_{y = 0,1} = 0\). Then we have 
    
    \[   \lvert  \langle  \mathrm{A}_s(g \partial_x \phi) , \mathrm{A}_s \phi \rangle \rvert  \lesssim  \lVert  \partial_y g \rVert_{\mathcal{G}^{2}_2} \lVert  \phi \rVert_{\mathcal{G}^{s + \frac{1}{4}}_2}^2 + \lVert  g \rVert_{\mathcal{G}^{s}_2}\lVert  \phi \rVert_{\mathcal{G}^{s}_2} \lVert  \partial_y \phi \rVert_{\mathcal{G}^{2}_2},
    \]  
    \[   \lvert  \langle  \mathrm{A}_s(g \partial_y \phi) , \mathrm{A}_s \phi \rangle \rvert  \lesssim  \lVert  \partial_y g \rVert_{\mathcal{G}^{2}_2} \lVert  \partial_y \phi \rVert_{\mathcal{G}^{s}_2} \lVert  \phi \rVert_{\mathcal{G}^{s}_2}  + \lVert  g \rVert_{\mathcal{G}^{s}_2} \lVert  \partial_y \phi \rVert_{\mathcal{G}^{2}_2} \lVert  \partial_y \phi\rVert_{\mathcal{G}^{s}_2} + \lVert  \partial_{yy} g \rVert_{\mathcal{G}^{2}_2} \lVert  \phi \rVert_{\mathcal{G}^{s}_2}^2,
    \]   
\end{proposition}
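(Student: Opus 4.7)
The plan is to decompose $\mathrm{A}_s(g\,\partial_i\phi)=g\,\partial_i\mathrm{A}_s\phi+[\mathrm{A}_s,g]\,\partial_i\phi$ for $i\in\{x,y\}$, treating the first piece via integration by parts in the $i$-variable and the second by a paraproduct analysis in the $x$-Fourier variable. Throughout, I use that $g|_{y=0,1}=0$ (so Poincar\'e gives $\|g\|_{L^\infty_y}\lesssim\|\partial_y g\|_{L^2_y}$) and that $\phi|_{y=0,1}=0$ (so the same applies to $\phi$ and to $\mathrm{A}_s\phi$).

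For the $\partial_x$ estimate, the IBP piece in $x$ produces $-\tfrac{1}{2}\langle\partial_x g,(\mathrm{A}_s\phi)^2\rangle$, which I bound by $\|\partial_x g\|_{L^\infty_{x,y}}\|\phi\|_{\mathcal{G}^s_2}^2$; combining the Gevrey Sobolev embedding $L^\infty_x\hookrightarrow\mathcal{G}^{1/2+\epsilon}_2$ with Poincar\'e in $y$ gives $\|\partial_x g\|_{L^\infty_{x,y}}\lesssim\|\partial_y g\|_{\mathcal{G}^2_2}$, yielding the first target term after $\|\phi\|_{\mathcal{G}^s_2}\leq\|\phi\|_{\mathcal{G}^{s+1/4}_2}$. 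For the commutator I use the pointwise bound $|a_s(\xi)-a_s(\xi-\eta)|\lesssim|\eta|\,\langle\max(\xi,\xi-\eta)\rangle^{-1/2}(a_s(\xi)+a_s(\xi-\eta))$ coming from $|\nabla a_s(\zeta)|\lesssim\langle\zeta\rangle^{-1/2}a_s(\zeta)$, split the convolution into low--high, high--low, and balanced regimes, and transfer the weight to the dominant frequency. The recovered $\tfrac{1}{2}$-derivative cancels the extra $\partial_x$ on $\phi$; symmetrizing via $\langle\xi-\eta\rangle^{-1/2}\sim\langle\xi\rangle^{-1/4}\langle\xi-\eta\rangle^{-1/4}$ in the low--high regime distributes the $\tfrac{1}{4}$ loss to both $\phi$ copies, giving the $\|\phi\|_{\mathcal{G}^{s+1/4}_2}^2$ factor, while the high-on-$g$ regime produces the term $\|g\|_{\mathcal{G}^s_2}\|\phi\|_{\mathcal{G}^s_2}\|\partial_y\phi\|_{\mathcal{G}^2_2}$ after applying Poincar\'e to convert $L^\infty$-norms on $\phi$ into $\|\partial_y\phi\|_{\mathcal{G}^2_2}$.

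For the $\partial_y$ estimate, the IBP piece gives $-\tfrac{1}{2}\langle\partial_y g,(\mathrm{A}_s\phi)^2\rangle$ (boundary terms vanish by $g|_{y=0,1}=0$), which I bound in two distinct ways. To produce the first target term I estimate $|\cdot|\leq\|\partial_y g\|_{L^2_yL^\infty_x}\|\mathrm{A}_s\phi\|_{L^4_yL^2_x}^2$ and use the Ladyzhenskaya-type inequality $\|\mathrm{A}_s\phi\|_{L^4_y}^2\lesssim\|\mathrm{A}_s\phi\|_{L^2_y}^{3/2}\|\partial_y\mathrm{A}_s\phi\|_{L^2_y}^{1/2}$ (valid since $\mathrm{A}_s\phi|_{y=0,1}=0$) followed by Poincar\'e, obtaining $\|\partial_y g\|_{\mathcal{G}^2_2}\|\partial_y\phi\|_{\mathcal{G}^s_2}\|\phi\|_{\mathcal{G}^s_2}$. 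To produce the third target term I use $\|\partial_y g\|_{L^\infty_{x,y}}\|\phi\|_{\mathcal{G}^s_2}^2$, exploiting that $g(0)=g(1)=0$ forces $\int_0^1\partial_y g\,dy=0$, so $\partial_y g$ has an interior zero (MVT) and $\|\partial_y g\|_{L^\infty_y}\lesssim\|\partial_{yy} g\|_{L^2_y}$. The commutator $[\mathrm{A}_s,g]\partial_y\phi$ is handled by the same paraproduct scheme as in the $\partial_x$ case; here no $\partial_x$ loss needs to be recovered, so the high-on-$g$ regime gives directly the middle target term $\|g\|_{\mathcal{G}^s_2}\|\partial_y\phi\|_{\mathcal{G}^2_2}\|\partial_y\phi\|_{\mathcal{G}^s_2}$ (after Poincar\'e on the low-frequency $\phi$ factor) and the low-on-$g$ regime is absorbed into the first target term by passing the Lipschitz gain onto $g$ to produce $\|\partial_y g\|_{\mathcal{G}^2_2}$.

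The main obstacle will be the balanced regime of the commutator, where neither $\xi$ nor $\xi-\eta$ dominates so the naive paraproduct transfer of the symbol fails; there one must symmetrize the recovered $\langle\cdot\rangle^{-1/2}$ gain and apply Young in $\xi$ together with Cauchy--Schwarz in $y$ to split the single $\sqrt{\partial_x}$ into two factors of $\langle\cdot\rangle^{1/4}$. The Gevrey-$\tfrac{1}{2}$ exponential weight is sharp for this argument: the estimate $|\nabla a_s|\lesssim\langle\cdot\rangle^{-1/2}a_s$ provides exactly the single $\sqrt{\partial_x}$ gain needed to absorb the extra horizontal derivative coming from $\partial_x\phi$, with no margin to spare.
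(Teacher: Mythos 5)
Your proposal is correct and follows essentially the same route as the paper: the same splitting into the symmetric term handled by integration by parts plus the commutator $[\mathrm{A}_s,g]\partial_i\phi$, the same frequency decomposition with the mean-value bound $|\nabla a_s|\lesssim\langle\cdot\rangle^{-1/2}a_s$ (the paper's Lemma \ref{lem:Commutator Estimate}), the same symmetrization of the recovered half-derivative into two factors of $\langle\cdot\rangle^{1/4}$, and the same use of the boundary conditions via Poincar\'e/Gagliardo--Nirenberg to produce the $\partial_y$-norms. The only differences are cosmetic (a three-regime rather than two-regime paraproduct split, and an optional Ladyzhenskaya bound for the $\langle\partial_y g,(\mathrm{A}_s\phi)^2\rangle$ term).
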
 
\begin{proof}  
    Using integration by parts, we have 
    \begin{align*}
            \lvert  \langle  \mathrm{A}_s (g \partial_x \phi) , \mathrm{A}_s \phi \rangle \rvert &\leq  \lvert \left\langle  [ \mathrm{A}_s, g \partial_x] \phi , \mathrm{A}_s \phi \right\rangle \rvert   +  \lvert  \langle  \partial_x g , (\mathrm{A}_s \phi)^2 \rangle \rvert \leq  \lvert  \langle  [ \mathrm{A}_s, g \partial_x] \phi , \mathrm{A}_s \phi \rangle \rvert + \lVert  \partial_x g \rVert_{L^\infty_{x,y}} \lVert  \phi \rVert_{\mathcal{G}^{s}_2}^2 , \\
            &\lesssim   \lvert \langle  [ \mathrm{A}_s , g \partial_x] \phi , \mathrm{A}_s \phi \rangle \rvert + \lVert \partial_y g \rVert_{\mathcal{G}^{2}_2} \lVert  \phi \rVert_{\mathcal{G}^{s}_2}^2  = : \mathrm{I} + \lVert  \partial_y g  \rVert_{\mathcal{G}^{2}_2} \lVert  \phi \rVert_{\mathcal{G}^{s}_2}^2. 
    \end{align*} 
    We split \(\mathrm{I}\) into high-low and low-high frequencies: 
    \begin{align*}
        \mathrm{I} &\lesssim  \int_{0}^1 \int_{ \lvert \xi - \eta \rvert \leq \frac{ \lvert \eta \rvert}{4}}   \lvert  \mathrm{A}_s (\xi ) - \mathrm{A}_s(\eta) \rvert  \lvert \hat{g}(\xi - \eta) \rvert  \lvert \eta \hat{\phi} \rvert  \lvert  \mathrm{A}_s \hat{\phi}(\xi) \rvert \\
        & \qquad\qquad  +  \int_{0}^1 \int_{ \lvert \xi - \eta \rvert \geq \frac{ \lvert \eta \rvert}{4}}   \lvert  \mathrm{A}_s (\xi ) - \mathrm{A}_s(\eta) \rvert  \lvert \hat{g}(\xi - \eta) \rvert  \lvert \eta \hat{\phi} \rvert  \lvert  \mathrm{A}_s \hat{\phi}(\xi) \rvert  \\
        &=: \mathrm{I}_{1} + \mathrm{I}_2. 
    \end{align*} 
    Then using Lemma \ref{lem:Commutator Estimate}, we have 
    \begin{align*}
        \mathrm{I}_1 &\lesssim   \int_{0}^1 \int_{ \lvert \xi - \eta \rvert \leq \frac{ \lvert \eta \rvert}{4}} \mathrm{A}_1 \hat{g}(\xi - \eta)  \mathrm{A}_{s + \frac{1}{4}}  \hat{\phi}(\eta) \mathrm{A}_{s + \frac{1}{4}} \hat{\phi} (\eta) \lesssim  \lVert  g \rVert_{\mathcal{G}^{2}_\infty} \lVert  \phi  \rVert_{\mathcal{G}^{s+\frac{1}{4}}_2}^2 , \\
        \mathrm{I}_2 &\lesssim \int_{0}^1 \int_{ \lvert \xi - \eta \rvert \geq \frac{ \lvert \eta \rvert}{4}}  \mathrm{A}_s \hat{g}(\xi - \eta) \mathrm{A}_2 \hat{\phi}(\eta) \mathrm{A}_s \hat{\phi}(\eta) \lesssim  \lVert  g \rVert_{\mathcal{G}^{s}_2} \lVert  \phi \rVert_{\mathcal{G}^{2}_\infty} \lVert  \phi \rVert_{\mathcal{G}^{s}_2},
    \end{align*}     
    where for the second line we have used 
    \[   \lvert \mathrm{A}_s(\xi) - \mathrm{A}_s(\eta) \rvert  \lesssim  e^{\delta \sqrt{\langle \xi - \eta \rangle}} e^{\delta \sqrt{ \langle \eta \rangle}}(\langle \xi - \eta \rangle^s + \langle \eta \rangle^s) \lesssim  \mathrm{A}_s(\xi - \eta) \mathrm{A}_2(\eta). 
    \] 
    These estimates along with Gagliardo-Nirenberg imply
    \[  \mathrm{I} \lesssim  \lVert \partial_y g \rVert_{\mathcal{G}^{2}_2} \lVert  \phi \rVert_{\mathcal{G}^{s + \frac{1}{4}}_2}^2 + \lVert  g \rVert_{\mathcal{G}^{s}_2} \lVert  \partial_y \phi \rVert_{\mathcal{G}^{2}_2} \lVert  \phi \rVert_{\mathcal{G}^{s}_2}.  
    \] 
    Similarly, using integration by parts, we have, 
    \begin{align*}
         \lvert  \langle  \mathrm{A}_s ( g \partial_y \phi), \mathrm{A}_s \phi \rangle \rvert &\leq  \lvert  \langle  [ \mathrm{A}_s, g \partial_y ] \phi, \mathrm{A}_s \phi \rangle \rvert +  \lvert  \langle  \partial_y g , (\mathrm{A}_s \phi)^2 \rangle \rvert \leq  \lvert  \langle  [ \mathrm{A}_s , g \partial_y] \phi , \mathrm{A}_s \phi \rangle \rvert + \lVert  \partial_y g \rVert_{L^\infty} \lVert  \phi \rVert_{\mathcal{G}^{s}_2}^2 \\
         &\lesssim   \lvert  \langle [ \mathrm{A}_s, g \partial_y ] \phi , \mathrm{A}_s \phi  \rangle\rvert + \lVert  \partial_{yy} g \rVert_{\mathcal{G}^{2}_2} \lVert  \phi \rVert_{\mathcal{G}^{s}_2}^2 =: \mathrm{J} + \lVert  \partial_{yy} g \rVert_{\mathcal{G}^{2}_2} \lVert  \phi \rVert_{\mathcal{G}^{s}_2}^2. 
    \end{align*} 
    Via the same approach as above, we have \( \mathrm{J} = \mathrm{J}_1 + \mathrm{J} _2\) and 
    \begin{align*}
        \mathrm{J} _1 &\lesssim  \int_{0}^1 \int_{ \lvert \xi - \eta \rvert \leq \frac{ \lvert \eta \rvert}{4}}  \mathrm{A}_1\hat{g}(\xi - \eta) \mathrm{A}_{s - \frac{1}{2}} \widehat{\partial_y \phi} (\eta) \mathrm{A}_s \hat{\phi}(\xi) \lesssim  \lVert  g \rVert_{\mathcal{G}^{2}_\infty} \lVert  \partial_y \phi \rVert_{\mathcal{G}^{s - \frac{1}{2}}_2} \lVert  \phi \rVert_{\mathcal{G}^{s}_2} ,\\
        \mathrm{J}_2 &\lesssim  \int_{0}^1 \int_{ \lvert \xi - \eta \rvert \geq \frac{ \lvert \eta \rvert}{4}}  \mathrm{A}_s \hat{g}(\xi - \eta) \mathrm{A}_2 \widehat{\partial_y \phi}(\eta) \mathrm{A}_s \hat{\phi}(\xi) \lesssim  \lVert  g \rVert_{\mathcal{G}^{s}_2} \lVert  \partial_y \phi \rVert_{\mathcal{G}^{2}_2} \lVert  \phi \rVert_{\mathcal{G}^{s}_\infty} . 
    \end{align*} 
    These estimates along with Gagliardo-Nirenberg imply 
    \[ \mathrm{J} \lesssim  \lVert  \partial_y g \rVert_{\mathcal{G}^{2}_2} \lVert  \partial_y \phi \rVert_{\mathcal{G}^{s}_2} \lVert  \phi \rVert_{\mathcal{G}^{s}_2}  + \lVert  g \rVert_{\mathcal{G}^{s}_2} \lVert  \partial_y \phi \rVert_{\mathcal{G}^{2}_2} \lVert  \partial_y \phi\rVert_{\mathcal{G}^{s}_2}. \qedhere
        \]
\end{proof}

\begin{lemma}[Commutator Estimate]\label{lem:Commutator Estimate}
    For \(s \geq 1\) and \(c \in (0,1)\), we have the following: 
    \begin{itemize}
        \item For \( \lvert \xi - \eta \rvert \leq c \lvert \eta \rvert :\)  
        \begin{equation}\label{eq:As for |x-e|<c|e|}
            \quad\quad\quad\lvert \mathrm{A}_{s}(\xi) - \mathrm{A}(\eta) \rvert  \lesssim  \mathrm{A}_{s - \frac{3}{4}} (\eta) \mathrm{A}_1(\xi - \eta) \langle \xi \rangle^{\frac{1}{4}}, 
        \end{equation} 
        \item For \( \lvert \eta \rvert \leq c\lvert \xi - \eta \rvert : \) 
        \begin{equation}\label{eq:As for |e|<c|x-e|}
            \lvert \mathrm{A}_s(\xi) - \mathrm{A}_s(\eta) \rvert  \lesssim  \mathrm{A}_{0}(\eta) \mathrm{A}_s(\xi - \eta) 
        \end{equation} 
        \item For \(c\lvert \eta \rvert \leq  \lvert \xi - \eta \rvert \leq c^{-1} \lvert \eta \rvert : \)  
        \begin{equation}\label{eq:As for e<e-x<e}
            \lvert \mathrm{A}_s(\xi) - \mathrm{A}_s(\eta) \rvert  \lesssim  \mathrm{A}_{s-1}(\eta) \mathrm{A}_1(\xi - \eta).  
        \end{equation} 
    \end{itemize} 
\end{lemma}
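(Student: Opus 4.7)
The three regimes have very different characters, and I would handle each separately. The regime $|\xi - \eta| \leq c|\eta|$ is a paraproduct-type regime where the two frequencies are close, and the natural tool is the mean value theorem in the frequency variable. The regimes $|\eta| \leq c|\xi-\eta|$ and $c|\eta| \leq |\xi-\eta| \leq c^{-1}|\eta|$ are both non-paraproduct regimes where $|\xi-\eta|$ is at least comparable to $|\eta|$, and the triangle inequality $|\mathrm{A}_s(\xi) - \mathrm{A}_s(\eta)| \leq \mathrm{A}_s(\xi) + \mathrm{A}_s(\eta)$ combined with sub-additivity of the Gevrey weight will suffice.

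For the first regime I would start by computing the derivative of the symbol:
\[
\mathrm{A}_s'(\zeta) = e^{\delta \sqrt{\langle \zeta \rangle}}\, \zeta\, \langle \zeta \rangle^{s-2}\Big(\tfrac{\delta \sqrt{\langle \zeta \rangle}}{2} + s \Big),
\]
which gives the pointwise bound $|\mathrm{A}_s'(\zeta)| \lesssim e^{\delta \sqrt{\langle \zeta\rangle}} \langle \zeta\rangle^{s-1/2}$. Since $|\xi - \eta| \leq c|\eta|$, the whole segment joining $\eta$ and $\xi$ lies in a region where $\langle \zeta \rangle \sim \langle \eta\rangle \sim \langle \xi\rangle$, so the mean value theorem yields
\[
|\mathrm{A}_s(\xi) - \mathrm{A}_s(\eta)| \lesssim |\xi-\eta|\, e^{\delta \sqrt{\langle \eta\rangle}} \langle \eta\rangle^{s-1/2}.
\]
Splitting $\langle \eta\rangle^{s-1/2} = \langle \eta\rangle^{s-3/4} \cdot \langle \eta\rangle^{1/4} \sim \langle \eta\rangle^{s-3/4}\langle \xi\rangle^{1/4}$ and absorbing $|\xi - \eta| \lesssim \langle \xi - \eta\rangle \leq \mathrm{A}_1(\xi - \eta)$ gives the stated bound $\mathrm{A}_{s-3/4}(\eta)\,\mathrm{A}_1(\xi-\eta)\, \langle \xi\rangle^{1/4}$.

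For the other two regimes the key elementary fact is the sub-additivity
\[
\sqrt{\langle \xi\rangle} \leq \sqrt{\langle \eta\rangle + \langle \xi - \eta\rangle + 1} \leq \sqrt{\langle \eta\rangle} + \sqrt{\langle \xi - \eta\rangle} + 1,
\]
which yields $e^{\delta\sqrt{\langle \xi\rangle}} \lesssim e^{\delta \sqrt{\langle \eta\rangle}}\, e^{\delta \sqrt{\langle \xi-\eta\rangle}}$; coupling this with $\langle \xi\rangle^s \lesssim \langle \eta\rangle^s + \langle \xi-\eta\rangle^s$ produces $\mathrm{A}_s(\xi) + \mathrm{A}_s(\eta) \lesssim e^{\delta\sqrt{\langle\eta\rangle}}\,e^{\delta\sqrt{\langle\xi-\eta\rangle}}(\langle \eta\rangle^s + \langle \xi-\eta\rangle^s)$. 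In the second regime $\langle \eta\rangle$ is dominated by $\langle \xi-\eta\rangle$, so every factor collapses into $\mathrm{A}_s(\xi-\eta)$, and the prefactor $\mathrm{A}_0(\eta) \geq 1$ in the conclusion is free. In the third regime $\langle \eta\rangle \sim \langle \xi-\eta\rangle$, and any distribution of the $s$ powers between the two sides is equivalent; writing it as $\langle \eta\rangle^{s-1}\langle \xi - \eta\rangle$ reproduces exactly $\mathrm{A}_{s-1}(\eta)\,\mathrm{A}_1(\xi-\eta)$.

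No step is genuinely difficult---the whole lemma is an exercise in symbol calculus---but the one requiring care is the first regime, where the exact exponent $s-1/2$ in the pointwise bound on $\mathrm{A}_s'$ must align with the statement's decomposition $(s - 3/4) + 1/4$. This balance is a direct consequence of the Gevrey-$2$ weight $e^{\delta \sqrt{\langle \partial_x\rangle}}$ appearing in $\mathrm{A}_s$: a different Gevrey index $e^{\delta \langle \partial_x\rangle^{\sigma}}$ would produce a derivative of size $\langle \zeta\rangle^{s+\sigma - 1}$ and shift every exponent on the right-hand side accordingly.
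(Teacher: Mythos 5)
Your treatment of the second and third regimes is exactly the paper's: triangle inequality plus sub-additivity of $e^{\delta\sqrt{\langle\cdot\rangle}}$ and of the bracket powers, with $\langle\eta\rangle\lesssim\langle\xi-\eta\rangle$ (resp. $\langle\eta\rangle\sim\langle\xi-\eta\rangle$) used to distribute the exponent $s$; nothing to add there. In the paraproduct regime your route differs from the paper's: you apply the mean value theorem to the symbol $\mathrm{A}_s(\zeta)=e^{\delta\sqrt{\langle\zeta\rangle}}\langle\zeta\rangle^{s}$, whereas the paper splits the difference algebraically and invokes the elementary bracket inequalities, in particular \eqref{eq:ExpSquareBracket for r<1}, which automatically carries the factor $e^{(\delta_0-\lambda)\sqrt{\langle\xi-\eta\rangle}}$ through the estimate. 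The MVT route is perfectly viable and your derivative computation and the bound $|\mathrm{A}_s'(\zeta)|\lesssim e^{\delta\sqrt{\langle\zeta\rangle}}\langle\zeta\rangle^{s-\frac12}$ are correct, as is the final redistribution $\langle\eta\rangle^{s-\frac12}\sim\langle\eta\rangle^{s-\frac34}\langle\xi\rangle^{\frac14}$.

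However, there is a genuine flaw in your displayed intermediate inequality. From $\langle\zeta\rangle\sim\langle\eta\rangle$ along the segment you conclude $|\mathrm{A}_s(\xi)-\mathrm{A}_s(\eta)|\lesssim|\xi-\eta|\,e^{\delta\sqrt{\langle\eta\rangle}}\langle\eta\rangle^{s-\frac12}$, i.e.\ you replace $e^{\delta\sqrt{\langle\zeta\rangle}}$ by $e^{\delta\sqrt{\langle\eta\rangle}}$. Comparability of the arguments does not give comparability of the exponentials: taking $\xi=(1+c)\eta$ with $|\eta|\to\infty$, the left side is of order $e^{\delta\sqrt{(1+c)|\eta|}}|\eta|^{s}$ while your right side is of order $e^{\delta\sqrt{|\eta|}}|\eta|^{s+\frac12}$, and the ratio blows up like $e^{\delta(\sqrt{1+c}-1)\sqrt{|\eta|}}|\eta|^{-\frac12}$. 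The statement you are proving survives only because the target right-hand side contains $\mathrm{A}_1(\xi-\eta)$, whose exponential factor $e^{\delta\sqrt{\langle\xi-\eta\rangle}}$ is precisely what absorbs this discrepancy. The fix is immediate: for $\zeta$ on the segment use the sub-additivity $\sqrt{\langle\zeta\rangle}\le\sqrt{\langle\eta\rangle}+\sqrt{\langle\xi-\eta\rangle}$ (since $|\zeta-\eta|\le|\xi-\eta|$), so that the MVT gives
\begin{equation*}
|\mathrm{A}_s(\xi)-\mathrm{A}_s(\eta)|\lesssim |\xi-\eta|\,e^{\delta\sqrt{\langle\eta\rangle}}e^{\delta\sqrt{\langle\xi-\eta\rangle}}\langle\eta\rangle^{s-\frac12},
\end{equation*}
after which your redistribution of powers and the bound $|\xi-\eta|\le\langle\xi-\eta\rangle$ yield \eqref{eq:As for |x-e|<c|e|} as stated. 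With that correction the argument is complete and is a legitimate alternative to the paper's splitting.
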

\begin{proof} 
    Recall the following properties of the Japanese bracket \( \langle x \rangle := \sqrt{1 +  \lvert x \rvert^2} : \) 
    \begin{itemize}
        \item For \( 0 < r \leq 1 : \)  
        \begin{equation}\label{eq:SquareBracket for r<1}
            \lvert  \langle x \rangle^r - \langle  y \rangle^r  \rvert \leq \langle  x - y \rangle^r
        \end{equation} 
        \item For general \(s \geq 0 : \)  
        \begin{equation}\label{eq:SquareBracket for s>0}
            \lvert  \langle x \rangle^s - \langle y \rangle^s \rvert  \lesssim _s \frac{1}{\langle x \rangle^{1-s} + \langle y \rangle^{1-s}} \langle  x - y \rangle. 
        \end{equation} 
        \item Combining the above two, we obtain for \(0 < r \leq 1 :  \)
        \begin{equation}\label{eq:ExpSquareBracket for r<1}
            \lvert e^{\langle x \rangle^r - \langle y \rangle^r} - 1 \rvert \leq  \lvert \langle x \rangle^r - \langle y \rangle^r \rvert e^{ \lvert \langle x \rangle^r - \langle y \rangle^r \rvert} \lesssim _r \frac{e^{\langle x - y \rangle^r} \langle  x -y  \rangle}{\langle x \rangle^{1 - r}  + \langle y \rangle^{1 - r}} 
       \end{equation} 
    \end{itemize}  
    For \( \lvert \xi - \eta \rvert \leq c \lvert \eta \rvert, \) triangle inequality gives us:
    \begin{align*}
         \lvert  \mathrm{A}_s(\xi) - \mathrm{A}_s(\eta) \rvert &\leq    \langle \xi \rangle^s  \lvert e^{(\delta _0 - \lambda) \sqrt{\langle \xi \rangle}} - e^{(\delta _0 - \lambda) \sqrt{\langle \eta \rangle}} \rvert + e^{(\delta _0 - \lambda ) \sqrt{\langle \eta \rangle}}  \lvert \langle \xi \rangle^s - \langle \eta \rangle^s \rvert \\
         \shortintertext{\text{Using \(\xi \sim \eta\) and \eqref{eq:SquareBracket for s>0}}}
         &\lesssim  \langle \eta \rangle^s e^{(\delta _0 - \lambda)\sqrt{\langle \eta \rangle}}  \lvert  e^{(\delta _0 - \lambda) (\sqrt{\langle \xi \rangle} - \sqrt{\langle \eta \rangle})}  - 1\rvert + e^{(\delta _0 - \lambda) \sqrt{\langle \eta \rangle}} \langle \eta \rangle^{s - 1} \langle  \xi - \eta \rangle \\
         \shortintertext{Using \eqref{eq:ExpSquareBracket for r<1} and gathering the terms:}
         &\lesssim e^{(\delta _0 - \lambda) \sqrt{\langle \eta \rangle}} \langle \eta \rangle^s \left(e^{(\delta _0 - \lambda)\sqrt{\langle  \xi -\eta \rangle}} \frac{  \langle \xi - \eta \rangle}{\sqrt{\langle \eta \rangle}} +  \frac{ \langle  \xi - \eta \rangle}{\langle \eta \rangle}\right)\\
         &\lesssim  e^{(\delta _0  - \lambda) \sqrt{\langle \eta \rangle}} \langle \eta \rangle^{s - \frac{1}{2}} e^{(\delta _0 - \lambda) \langle \sqrt{\xi - \eta} \rangle} \langle  \xi - \eta \rangle 
         \shortintertext{As \(\eta \sim \xi \) so we can transfer the derivatives from \(\eta \) onto \(\xi \) however many we want. For our purposes, the following will suffice:} 
         &\lesssim e^{(\delta _0 - \lambda) \sqrt{\langle \eta \rangle}} \langle \eta \rangle^{s - \frac{3}{4}} e^{(\delta _0 - \lambda) \sqrt{\langle \xi- \eta \rangle}} \langle \xi - \eta \rangle \langle \xi \rangle^{\frac{1}{4}} \lesssim \mathrm{A}_{s - \frac{3}{4}}(\eta) \mathrm{A}_{1}(\xi - \eta) \langle \xi \rangle^{\frac{1}{4}}.
    \end{align*} 
    
    The proofs for the remaining inequality are somewhat obvious. For \( \lvert \eta \rvert \leq c \lvert \xi - \eta \rvert, \) we have \(\xi \approx  \xi - \eta\) so: 
    \begin{align*}
         \lvert \mathrm{A}_s(\xi) - \mathrm{A}_s(\eta) \rvert &\lesssim  e^{\delta \sqrt{\langle \xi \rangle}} \langle \xi \rangle^s + e^{\delta \sqrt{\langle \eta \rangle}}\langle \eta \rangle^s \lesssim  e^{\delta \sqrt{\langle \xi - \eta \rangle}} e^{\delta \sqrt{\langle \eta \rangle}} \langle \xi -\eta \rangle^s  + e^{\delta \sqrt{\langle \eta \rangle}} \langle \xi - \eta \rangle^s \\
         & \lesssim  e^{\delta \sqrt{ \langle \xi - \eta \rangle}} \langle \xi - \eta \rangle^s e^{\delta \sqrt{\langle \eta \rangle}} \lesssim  \mathrm{A}_s(\xi - \eta) \mathrm{A}_0 (\eta) . 
    \end{align*} 
    And for \(c \lvert \eta \rvert \leq  \lvert \xi - \eta \rvert \leq c^{-1} \lvert  \eta \rvert\), we have 
    \begin{align*}
         \lvert \mathrm{A}_s(\xi) - \mathrm{A}_s(\eta) \rvert &\lesssim  e^{\delta \sqrt{\langle \xi \rangle}} \langle \xi \rangle^s + e^{\delta \sqrt{\langle \eta \rangle}}\langle \eta \rangle^s \lesssim  e^{\delta \sqrt{ \langle \xi - \eta \rangle}} e^{\delta \sqrt{ \langle \eta \rangle}} ( \langle \xi - \eta \rangle^s + \langle \eta \rangle^s) \\
         &  \lesssim  e^{\delta \sqrt{\langle \xi - \eta \rangle}} e^{\delta \sqrt{ \langle \eta \rangle}} \langle \eta \rangle^{s-1} \langle \xi - \eta \rangle.  \qedhere
    \end{align*}  
\end{proof}    

\printbibliography

\end{document}